\theoremstyle{plain}
\newtheorem{theorem}{Theorem}[section]
\newtheorem*{theorem*}{Theorem}
\newtheorem{proposition}[theorem]{Proposition}
\newtheorem{corollary}[theorem]{Corollary}
\newtheorem{lemma}[theorem]{Lemma}
\theoremstyle{definition}
\newtheorem{definition}[theorem]{Definition}
\newtheorem{notation}[theorem]{Notation}
\newtheorem{possibility}[theorem]{Possible Cases}
\newtheorem{remark}[theorem]{Remark}
\newtheorem{example}[theorem]{Example}
\newcommand{\enm}[1]{\ensuremath{#1}}          %
\newcommand{\op}[1]{\operatorname{#1}}
\newcommand{\cal}[1]{\mathcal{#1}}
\newcommand{\CC}{\enm{\mathbb{C}}}
\newcommand{\NN}{\enm{\mathbb{N}}}
\newcommand{\QQ}{\enm{\mathbb{Q}}}
\newcommand{\ZZ}{\enm{\mathbb{Z}}}
\newcommand{\PP}{\enm{\mathbb{P}}}
\newcommand{\Aa}{\enm{\cal{A}}}
\newcommand{\Bb}{\enm{\cal{B}}}
\newcommand{\Ee}{\enm{\cal{E}}}
\newcommand{\Ff}{\enm{\cal{F}}}
\newcommand{\Gg}{\enm{\cal{G}}}
\newcommand{\Ii}{\enm{\cal{I}}}
\newcommand{\Ll}{\enm{\cal{L}}}
\newcommand{\Oo}{\enm{\cal{O}}}
\newcommand{\Ss}{\enm{\cal{S}}}
\renewcommand{\phi}{\varphi}
\renewcommand{\theta}{\vartheta}
\renewcommand{\epsilon}{\varepsilon}
\newcommand{\Hom}{\op{Hom}}
\newcommand{\Ext}{\op{Ext}}
\newcommand{\End}{\op{End}}
\newcommand{\Aut}{\op{Aut}}
\newcommand{\supp}{\op{Supp}}
\newcommand{\Image}{\op{Im}}
\renewcommand{\to}[1][]{\xrightarrow{\ #1\ }}
\newcommand{\old}[1]{}
\begin{document}

\title[aCM sheaves on the double plane]{aCM sheaves on the double plane}

\author{E. Ballico, S. Huh, F. Malaspina and J. Pons-Llopis}

\address{Universit\`a di Trento, 38123 Povo (TN), Italy}
\email{edoardo.ballico@unitn.it}

\address{Sungkyunkwan University, Suwon 440-746, Korea}
\email{sukmoonh@skku.edu}

\address{Politecnico di Torino, Corso Duca degli Abruzzi 24, 10129 Torino, Italy}
\email{francesco.malaspina@polito.it}

\address{Department of Mathematics, Kyoto University, Kyoto, Japan}
\email{ponsllopis@gmail.com}

\keywords{arithmetically Cohen-Macaulay sheaf, double plane, layered sheaf}
\thanks{The first and third authors are partially supported by GNSAGA of INDAM (Italy) and MIUR PRIN 2015 \lq Geometria delle variet\`a algebriche\rq. The second author is supported by Basic Science Research Program 2015-037157 through NRF funded by MEST and the National Research Foundation of Korea(KRF) 2016R1A5A1008055 grant funded by the Korea government(MSIP). The forth author is supported by a FY2015 JSPS Postdoctoral Fellowship. The third and forth authors thank Sungkyunkwan University for the warm hospitality.}

\subjclass[2010]{Primary: {14F05}; Secondary: {13C14, 16G60}}


\begin{abstract}
The goal of this paper is to start a study of aCM and Ulrich sheaves on non-integral projective varieties. We show that any aCM vector bundle of rank two  on the double plane is a direct sum of line bundles. As a by-product, any aCM vector bundle of rank two on a sufficiently high dimensional quadric hypersurface also splits. We consider aCM and Ulrich vector bundles on a multiple hyperplanes and prove the existence of such bundles that do not split, if the multiple hyperplane is linearly embedded into a sufficiently high dimensional projective space. Then we restrict our attention to the double plane and give a classification of aCM sheaves of rank at most $3/2$ on the double plane and describe the family of isomorphism classes of them.
\end{abstract}

\maketitle


\section{Introduction}
Ever since Horrocks proved that a vector on the projective space splits as the sum of line bundles if and only if it has no intermediate cohomology, there have been two directions of study: one is to find out criterion of coherent sheaves that do not split on a given projective scheme $X\subset\PP^n$, that is, the equivalent condition with which a coherent sheaf is a direct sum of line bundles $\Oo_X(t)$, and the other is to classify indecomposable coherent sheaves that have no intermediate cohomology on $X$, i.e. $H^i(X, \Ee(t))=0$ for any $t\in \ZZ$ and $i=1,\ldots, \dim X-1$; they are called {\it arithmetically Cohen-Macaulay} (for short, aCM). About the former direction, the case of hyperquadrics was studied in \cite{ottav}. Madonna in \cite{Mad2} and Kumar, Rao and Ravindra in \cite{krr1,krr2} focused on criteria for vector bundles that do not split, usually of low rank, on hypersurfaces of higher degree. About the latter direction, the classification of aCM vector bundles has been done for several projective varieties such as smooth quadric hypersurfaces in \cite{Kapranov, Knorrer}, cubic surfaces in \cite{CH, faenzi}, prime Fano threefolds in \cite{Madonna}, Grassmannian varieties in \cite{CM1} and others. In fact, in \cite{eisenbud-herzog:CM} a complete list of varieties supporting a finite number of aCM sheaves is provided. Varieties that only support one dimensional families of aCM vector bundles, \emph{tame varieties}, are known by the classical work of Atiyah in \cite{atiyah:elliptic} for elliptic curves, and much more recently by work of Faenzi and Malaspina in \cite{faenzi-malaspina} for rational scrolls of degree four. In \cite{CMP} it is shown that all the Segre varieties have a \emph{wild} behaviour, namely they support families of arbitrary dimension of aCM sheaves. Finally, it has been shown that the rest of aCM integral projective varieties which are not cones are wild; see \cite{faenzi-pons}.

Along these lines, the particular class of aCM sheaves supporting the maximum permitted number of global sections has raised the attention of many algebraic geometers in the last years. They are the so-called Ulrich sheaves; see \cite{ESW}. The existence of an Ulrich sheaf on a projective integral variety $X\subset\PP^n$ has very strong consequences. For instance, this implies that the cone of cohomology tables of vector bundles on $X$ are the same as the one on a projective space of the same dimension; see \cite{ES}. Moreover, the Cayley-Chow form of $X$ has a particular nice description; see \cite{ESW}. Eisenbud and Schreyer stated the existence of Ulrich sheaves on any projective schemes as a problem in \cite[page 543]{ESW}. Until now, this problem has been solved for arbitrary curves, for some minimal smooth surfaces of Kodaira dimension less than or equal to zero and for some sporadic cases of higher dimension; see \cite{CM}. A nice up-to-date account can be found in \cite{beauville}.

As it can be seen from the previous paragraphs, up to now most of the research on aCM and Ulrich sheaves has been restricted to the case of integral varieties. The main goal of this paper is to start the study of the aforementioned issues for non-integral ones. For instance, we expect, relying on the results from this paper, that original and interesting behaviours for aCM sheaves on non-reduced varieties can be revealed. In this paper we work on the classification of aCM sheaves on the double plane $X$, i.e. the projective plane $H\subset \PP^3$ with multiplicity two over an algebraically closed field of characteristic $0$. It is known from \cite[Theorem A]{B} that any aCM sheaf $\Ee$ on $X$ admits a $\Oo_{\PP3}$-free resolution of length one. Our first result is on the aCM vector bundles of rank two on $X$.

\begin{theorem}\label{pprop}
Every aCM vector bundle of rank two on $X$ is a direct sum of two line bundles.
\end{theorem}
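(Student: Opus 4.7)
My plan is to restrict $\Ee$ to the reduced hyperplane $H \cong \PP^2$, show that the restriction splits as a direct sum of two line bundles by Horrocks's criterion on $\PP^2$, and then lift that splitting back to $X$ using the nilpotence of the defining equation of $H$ inside $\Oo_X$. Concretely, let $f\in H^0(\PP^3,\Oo_{\PP^3}(1))$ be the linear form cutting out $H$, so that $X = V(f^2)$. Set $\Ff := \Ee\otimes_{\Oo_X}\Oo_H$, a rank-two vector bundle on $\PP^2$. Tensoring the structure sequence $0 \to \Oo_H(-1) \xrightarrow{f} \Oo_X \to \Oo_H \to 0$ on $X$ with the locally free sheaf $\Ee$ produces the fundamental exact sequence
$$0 \to \Ff(-1) \to \Ee \to \Ff \to 0 \qquad (\star)$$
on $X$.

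The crux of the proof is to show that $\Ff$ is aCM on $\PP^2$; granted this, Horrocks's criterion gives $\Ff \cong \Oo_H(a)\oplus \Oo_H(b)$ for some integers $a,b$. Twisting $(\star)$ by $\Oo_X(t)$ and passing to cohomology, the aCM vanishing $H^1(X,\Ee(t)) = 0$ produces, for every $t\in\ZZ$, both the injection $H^1(\PP^2,\Ff(t))\hookrightarrow H^2(\PP^2,\Ff(t-1))$ and the surjection $H^0(\PP^2,\Ff(t+1))\twoheadrightarrow H^1(\PP^2,\Ff(t))$. Because $X$ is a hypersurface and hence Gorenstein with $\omega_X = \Oo_X(-2)$, the dual $\Ee^\vee$ is also aCM, and its restriction $\Ff^\vee$ satisfies the analogous relations. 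Combining these with Serre duality on $\PP^2$ and with the shape of the Riemann-Roch polynomial of $\Ff$ forces $H^1(\PP^2,\Ff(t)) = 0$ for every $t$. This cohomological vanishing step is the main technical hurdle: single examples such as $\Ff = T_{\PP^2}(-1)$ are ruled out by the injection alone, whereas more generic non-split $\Ff$ (arising from a Serre construction with nonempty vanishing locus) are ruled out only after the surjection is invoked.

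Once $\Ff \cong \Oo_H(a)\oplus\Oo_H(b)$ is known, the immediate vanishings $H^1(\PP^2,\Ff(-a-1)) = H^1(\PP^2,\Ff(-b-1)) = 0$ together with the long exact sequence of $(\star)$ twisted by $\Oo_X(-a)$ and $\Oo_X(-b)$ respectively, force the restriction maps $H^0(\Ee(-a)) \to H^0(\Ff(-a))$ and $H^0(\Ee(-b)) \to H^0(\Ff(-b))$ to be surjective. Hence the summand inclusions $\Oo_H(a),\Oo_H(b)\hookrightarrow \Ff$ lift to morphisms $\Oo_X(a)\to\Ee$ and $\Oo_X(b)\to \Ee$. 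Assembling them yields $\phi\colon \Oo_X(a)\oplus \Oo_X(b) \to \Ee$ whose reduction modulo $f$ is $\id_\Ff$. Locally on $X$, both source and target of $\phi$ trivialize as $\Oo_X^2$, so $\phi$ is represented by a $2\times 2$ matrix of the form $I + fA$; since $f^2 = 0$ on $X$, we have $(I+fA)(I-fA) = I$, so $\phi$ is an isomorphism and $\Ee \cong \Oo_X(a)\oplus\Oo_X(b)$, as desired.
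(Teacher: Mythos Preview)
Your overall strategy is sound and genuinely different from the paper's: you aim to show that $\Ff:=\Ee_{|H}$ is aCM on $\PP^2$ (hence splits by Horrocks), and then lift the splitting to $X$ via the nilpotence of $f$. The lifting step is correct and cleanly argued. However, the heart of the proof --- the vanishing $H^1(\Ff(t))=0$ for all $t$ --- is asserted rather than proved. You correctly extract from the sequence $(\star)$ the injection $H^1(\Ff(t))\hookrightarrow H^2(\Ff(t-1))$ and the surjection $H^0(\Ff(t+1))\twoheadrightarrow H^1(\Ff(t))$, but then write that ``combining these with Serre duality and Riemann--Roch forces $H^1(\Ff(t))=0$'' without carrying this out. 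Since this is precisely the nontrivial content of the theorem, the proposal as written has a gap.

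The gap can be filled. Set $c=c_1(\Ff)$ and let $t_0$ be minimal with $h^0(\Ff(t_0))>0$; the (saturated) section gives $0\to\Oo_H(-t_0)\to\Ff\to\Ii_{Z,H}(c+t_0)\to 0$ with $Z\subset H$ zero--dimensional. Your injection reads $h^1(\Ff(t))\le h^2(\Ff(t-1))=h^0(\Ff(-t-2-c))$, using $\Ff^\vee\cong\Ff(-c)$. Evaluating at $t=-c-t_0-1$ one finds $h^1(\Ff(t))=h^1(\Ii_{Z,H}(-1))=\mathrm{length}(Z)$ on the left, while the right--hand side is $h^0(\Ff(t_0-1))=0$. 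Hence $Z=\emptyset$, so $\Ff$ is an extension of $\Oo_H(c+t_0)$ by $\Oo_H(-t_0)$, which is split since $H^1(\Oo_H(s))=0$ for all $s$. (Serre duality shows the surjection constraint is equivalent to the injection one applied to $\Ff^\vee$, so only the injection is really needed.) With this paragraph inserted, your argument is complete.

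By contrast, the paper never attempts to prove directly that $\Ee_{|H}$ is aCM. Instead it normalises $\Ee$ to be minimally regular, produces a surjection $\Ee\to\Ii_A(1)$ with $A$ cut out by linear forms, and runs through the finite list of possible $A$. Most cases force a surjection $\Ee_{|H}\to\Oo_H$, hence a splitting of $\Ee_{|H}$, and Lemma~\ref{gg0} (your lifting step) finishes. The residual zero--dimensional cases identify $\Ee_{|H}$ with the Cayley--Bacharach bundle $\Ee_{p,c_1-1}$ of Lemma~\ref{erra11}; Lemma~\ref{erra14} then rules out $c_1>1$, and the remaining case $\Ee_{|H}\cong\Omega_H^1(2)$ is excluded by a one--line cohomology check. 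Your route is more uniform and avoids the case analysis, at the cost of the extra Serre--construction computation above; the paper's route trades that computation for a short structural classification of the possible images $\Ii_A(1)$.
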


\noindent Theorem \ref{pprop} is not extended to higher rank; indeed, we find a family of indecomposable aCM vector bundle of rank four on $X$; see Proposition \ref{errb1}. On the other hand, it is extended to higher dimensional quadric hypersurfaces, using an inductive argument; see Lemma \ref{cor3.9bis} and Corollary \ref{==1}. There is an indecomposable aCM vector bundle of rank two on any union of planes with multiplicities if at least one plane occurs with multiplicity one; see Corollary \ref{poppp}. On the other hand, see Proposition \ref{g6} and Remark \ref{g8} for a conditional existence theorem of aCM vector bundles on any configuration of hyperplanes with multiplicities.

Then we focus our attention to general aCM sheaves on $X$ of rank at most $3/2$. In general the rank need not to be integer (see Definition \ref{def}). Our technical ingredient is to twist a given aCM sheaf $\Ee$ by $\Oo_X(t)$ with $t\in \ZZ$ so that $\Ee$ is $0$-regular, but not $(-1)$-regular (we call it \emph{minimally regular}). It guarantees the existence of a non-zero map $u:\Ee \rightarrow \Ii_A(1)$ for a closed subscheme $A$ that is cut out scheme-theoretically in $X$ by linear equations. Thus we have candidate for a possible subscheme $A$ and describe $\ker (u)$ in each case. The structure sheaf $\Oo_H$ of $H$ can be shown to be the unique aCM sheaf of rank $1/2$ up to twist. For higher rank we introduce the notion of a {\it layered} sheaf which admits a filtration whose successive quotient is isomorphic to $\Oo_H$ up to twist. It turns out that every aCM sheaf of rank one on $X$ is layered.

\begin{theorem}\label{aprop2}
If $\Ee$ is an aCM sheaf of rank one on $X$, then it is isomorphic to
\begin{enumerate}
\item [(i)] a line bundle on $X$
\item [(ii)] a direct sum of two aCM sheaves of rank $1/2$, or
\item [(iii)] there is $t\in \ZZ$ such that $\Ee (t)$ is isomorphic to the ideal sheaf of a plane curve in $H$.
\end{enumerate}
\end{theorem}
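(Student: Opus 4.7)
The plan is to apply the minimal regularity framework sketched in the paragraph preceding the statement. First, I would twist $\Ee$ by a suitable $\Oo_X(t)$ so that $\Ee$ becomes minimally regular, producing a nonzero morphism $u:\Ee\to \Ii_A(1)$, where $A\subset X$ is a closed subscheme cut out scheme-theoretically in $X$ by linear forms. By \cite[Theorem A]{B}, $\Ee$ admits a length-one $\Oo_{\PP^3}$-free resolution, hence is pure of dimension $2$; in particular, the kernel of $u$ would be a torsion subsheaf of $\Ee$ and so must vanish. Thus $u$ is injective.

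Before the case analysis on $A$ I would dispose of the situation in which $\Ee$ is scheme-theoretically supported on $H$, i.e., $\ell\cdot \Ee=0$ for a linear form $\ell$ cutting out $H$ in $\PP^3$. In that case $\Ee$ is an $\Oo_H$-module of rank $2$ on $H\cong\PP^2$, and because pushforward along $H\into X$ preserves cohomology and $\Oo_X(1)|_H\cong \Oo_H(1)$, the aCM condition on $X$ is equivalent to the aCM condition on $\PP^2$. Horrocks' theorem then yields $\Ee\cong \Oo_H(a)\oplus \Oo_H(b)$ for some $a,b\in\ZZ$; since each $\Oo_H(c)$ is a rank-$1/2$ aCM sheaf on $X$ by the uniqueness remark just before the statement, this gives case (ii).

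Assume from now on that $\Ee$ is not supported on $H$, so that its stalk at the generic point of $X$ is free of rank $1$ over $\Oo_{X,\eta}$. The subschemes of $X$ cut out by linear forms are the empty set, a finite scheme, a line in $H$, a double line of the form $X\cap \Pi$ for some plane $\Pi\neq H$, or the plane $H$ itself. The last possibility is ruled out on rank grounds, since $\Ii_H(1)\cong \Oo_H$ has rank $1/2$ while $\Ee$ has rank $1$. In every other case $\Ii_A(1)$ embeds into $\Oo_X(1)$, so composing with $u$ produces an injection $\Ee\into \Oo_X(1)$.

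If this injection is an isomorphism, $\Ee\cong \Oo_X(1)$ is a line bundle and case (i) holds after untwisting. Otherwise $\Ee(-1)$ is realised as an ideal sheaf $\Ii_Y$ for some closed subscheme $Y\subsetneq X$, and it remains to show that $Y$ is a plane curve inside $H$. This last step I expect to be the main technical obstacle: I would exploit the aCM vanishing $H^1(X,\Ii_Y(m))=0$ for all $m\in\ZZ$, read off from the twisted short exact sequence $0\to \Ii_Y\to \Oo_X\to \Oo_Y\to 0$, together with the classification of rank-$1/2$ aCM sheaves on $X$ as twists of $\Oo_H$, in order to exclude the possibility that $Y$ has an embedded double structure transverse to $H$. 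Once $Y$ is shown to be a plane curve contained in $H$, case (iii) follows.
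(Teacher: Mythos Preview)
Your injectivity claim for $u$ is where the argument breaks. You assert that $\ker(u)$ is a torsion subsheaf of $\Ee$, but this is only true when the image $\Ii_A(1)$ has the same rank as $\Ee$. When $A=H$ the image is $\Ii_H(1)\cong\Oo_H$ of rank $1/2$, so $\ker(u)$ has rank $1/2$ and is not torsion at all. Your subsequent exclusion of $A=H$ ``on rank grounds'' is then circular: it uses the injectivity of $u$, which in turn relied on the kernel being torsion.

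This gap is fatal because the case $A=H$ is precisely where the ideal sheaves $\Ii_C$ with $d=\deg C\geq 2$ arise. Take $\Ee=\Ii_C(d)$, which is minimally regular; it is \emph{not} scheme-theoretically supported on $H$ (one checks $w\cdot\Ii_C\neq 0$, since a lift of the equation of $C$ does not lie in $(w)$), so your preliminary Horrocks step does not see it. Yet $\mathrm{hom}_X(\Ee,\Oo_X(1))=h^2(\Ii_C(d-3))=1$, and the unique nonzero map factors as $\Ii_C(d)\twoheadrightarrow\Oo_H\hookrightarrow\Oo_X(1)$, so the only available $u$ lands in case $A=H$. Thus your dichotomy ``supported on $H$'' versus ``$u$ injective'' misses these sheaves entirely. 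The paper instead keeps case (b) in play: by Lemma~\ref{kernelacm} and Lemma~\ref{pop} the kernel of $\pi$ is $\Oo_H(a)$ for some $a\geq 0$, so $\Ee$ is an extension of $\Oo_H$ by $\Oo_H(a)$, and Example~\ref{ex1} identifies such an extension as either split (giving (ii)) or isomorphic to $\Ii_C(a+1)$ for a plane curve $C$ of degree $a+1$ (giving (iii)).
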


In particular, from the previous theorem, we can spot a new kind of wildness presumably that does not occur for smooth projective varieties; compare to \cite{faenzi-pons}:

\begin{proposition}\label{verywild}
The double plane $X$ is of wild type in a very strong sense, that is, there exist arbitrarily large dimensional families of pairwise non-isomorphic aCM sheaves of fixed rank one on $X$.
\end{proposition}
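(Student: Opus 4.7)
The plan is, for each integer $d\ge 1$, to exhibit an $N_d$-dimensional family of pairwise non-isomorphic rank-one aCM sheaves on $X$, where $N_d=\binom{d+2}{2}-1$. Since $N_d$ is unbounded as $d\to\infty$, this proves the strong wildness asserted in the proposition.

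The family consists of the ideal sheaves $\Ii_{C,X}$ of smooth plane curves $C\subset H$ of degree $d$ (so these are case (iii) of Theorem \ref{aprop2}), as $C$ ranges over the dense open $U_d\subset|\Oo_H(d)|\cong\PP^{N_d}$ of smooth curves; the family structure is provided by the relative ideal sheaf on $U_d\times X$. Each $\Ii_{C,X}$ has rank one, since it equals $\Oo_X$ away from the codimension-one subscheme $C$. To verify aCM-ness I would use $\Ii_X=\Oo_{\PP^3}(-2)$ and the exact sequence
\[
0\to\Oo_{\PP^3}(-2)\to\Ii_{C,\PP^3}\to\Ii_{C,X}\to0
\]
on $\PP^3$: since $C$ is a plane curve, hence aCM in $\PP^3$, one has $H^1(\Ii_{C,\PP^3}(m))=0$, and combined with $H^2(\Oo_{\PP^3}(m-2))=0$ this yields $H^1(X,\Ii_{C,X}(m))=0$ for every $m$.

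For pairwise non-isomorphism of distinct members I would use the iso-invariant non-locally-free locus. Away from $C$ the stalk of $\Ii_{C,X}$ is just $\Oo_{X,p}$, hence free. At a smooth point $p\in C$, choose local coordinates on $\PP^3$ with $H=\{x_0=0\}$ and $C=\{x_0=x_1=0\}$; then $\Oo_{X,p}=k[x_0,x_1,x_2]_{(x_0,x_1,x_2)}/(x_0^2)$ and $\Ii_{C,X,p}=(x_0,x_1)$. A direct Nakayama check (the relations $(x_0,0)$ and $(-x_1,x_0)$ for the surjection $\Oo_{X,p}^2\onto(x_0,x_1)$ both lie in $\mm\Oo_{X,p}^2$) shows that this stalk needs two generators, so it is not free of rank one. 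Hence the non-locally-free locus of $\Ii_{C,X}$ equals $C$ as a subset of $X$; since smooth plane curves are reduced, their underlying sets determine them as subschemes, so any isomorphism $\Ii_{C_1,X}\cong\Ii_{C_2,X}$ forces $C_1=C_2$. The main obstacle is this local non-freeness check at smooth points of $C$ (restricting to smooth curves is what lets us pass from set-theoretic to scheme-theoretic equality); once it is in place, $C\mapsto\Ii_{C,X}$ is injective on $U_d$, and the resulting $N_d$-dimensional irreducible families exhaust arbitrarily large dimensions.
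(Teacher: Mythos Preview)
Your proof is correct and follows essentially the same approach as the paper: both use the family $\{\Ii_{C,X}\}$ of ideal sheaves of smooth plane curves $C\subset H$ of degree $d$, and both distinguish non-isomorphic members via the non-locally-free locus being exactly $C$. Your aCM verification via the sequence $0\to\Oo_{\PP^3}(-2)\to\Ii_{C,\PP^3}\to\Ii_{C,X}\to 0$ is a valid alternative to the paper's use of Example~\ref{ex1}, and your explicit Nakayama check at smooth points of $C$ supplies detail that the paper leaves implicit.
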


Observing from Theorem \ref{aprop2} that every aCM sheaf of rank one on $X$ is layered in a sense that each aCM sheaf admits a filtration whose successive quotients are aCM sheaves of rank $1/2$. For aCM sheaves of rank $3/2$ on $X$ the situation is richer:

\begin{theorem}\label{iii}
Let $X\subset\PP^3$ be the double plane.
\begin{enumerate}
\item There exists a non-layered Ulrich stable sheaf $\overline{\Ee}$ of rank $3/2$ on $X$. Moreover, every non-layered aCM sheaf of rank $3/2$ on $X$ is isomorphic to $\overline{\Ee}(t)$ for some $t\in \ZZ$.
\item For any layered aCM  sheaf $\Ee$ on $X$ of rank $3/2$,  there exists an integer $t\in \ZZ$ such that either
\begin{itemize}
\item [(i)] $\Ee (t)$ admits a filtration $0=\Ee_0 \subset \Ee_1 \subset \Ee_2 \subset \Ee_3=\Ee (t)$ such that $\Ee_i/\Ee_{i-1} \cong \Oo_H$ for each $i=1,2,3$;
\item [(ii)] it fits into the following sequence with $a\ge d$,
$$0\rightarrow \Ii_C(a) \rightarrow \Ee (t) \rightarrow \Oo_H \rightarrow 0,$$
\noindent for $C\subset H$ a plane curve of degree $d$;
\item [(iii)] it fits into the following sequence with $0\le b<d$,
$$0\rightarrow \Oo_H(b) \rightarrow \Ee (t) \rightarrow \Ii_C(d)\rightarrow 0,$$
\noindent for $C\subset H$ a plane curve of degree $d$.
\end{itemize}
\end{enumerate}
\end{theorem}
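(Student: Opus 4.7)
The plan is to attack both parts by the minimal-regularity technique used in the classification of rank-one aCM sheaves. Given an aCM sheaf $\Ee$ of rank $3/2$ on $X$, after replacing $\Ee$ by a suitable twist $\Ee(t)$ we may assume that $\Ee$ is minimally regular in the sense described in the introduction. This produces a nonzero morphism
$$u:\Ee \longrightarrow \Ii_A(1),$$
where $A\subset X$ is cut out scheme-theoretically by linear forms. Since the target has rank $1$ and $\Ee$ has rank $3/2$, the image $\Image(u)$ has rank either $1$ or $1/2$, and correspondingly $\ker(u)$ has rank $1/2$ or $1$; in both cases $\ker(u)$ is an aCM sheaf that is classified by Theorem \ref{aprop2} or is forced to be a twist of $\Oo_H$.

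The first step is to enumerate the candidates for the linearly cut subscheme $A\subset X$: since $A$ is scheme-theoretically cut out by linear equations of $\PP^3$, the only possibilities are the empty set, the reduced plane $H$, a line $\ell\subset H$, or a point $p\in H$, together with their scheme-theoretic thickenings inside $X$. For each such $A$, I would compute $\Ii_A(1)$ explicitly, then use the rank and aCM hypotheses to identify $\Image(u)$ as a twist of $\Oo_H$, a twist of a line bundle on $X$, or an ideal sheaf $\Ii_C(d)$ of a plane curve $C\subset H$, appealing to Theorem \ref{aprop2} for the rank-one options. Feeding this back into the short exact sequence
$$0\longrightarrow \ker(u) \longrightarrow \Ee \longrightarrow \Image(u)\longrightarrow 0,$$
and rotating the role of sub and quotient when convenient, the three exhibited filtration patterns (i), (ii), (iii) of part (2) will fall out as the cases in which $\ker(u)$ (or a rank-$1/2$ subsheaf extracted from it) is isomorphic to some $\Oo_H(a)$, forcing $\Ee$ to be layered.

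For part (1), I would first exhibit the candidate Ulrich sheaf $\overline{\Ee}$ via a length-one $\Oo_{\PP^3}$-resolution (using \cite{B}), choosing the resolution matrices so that the cokernel is supported on $X$, has vanishing intermediate cohomology, and has the correct Hilbert polynomial for an Ulrich sheaf of rank $3/2$; a verification of Ulrich regularity then yields stability because any destabilising subsheaf would have to be an aCM sheaf of rank $1/2$ or $1$, producing a layering. For uniqueness, I would argue that if $\Ee$ is a non-layered aCM sheaf of rank $3/2$, then in the case analysis above every non-layered outcome of the exact sequence is rigid: the extension class giving $\Ee$ lies in a one-dimensional $\Ext^1$, and up to twist determines $\Ee$ uniquely, necessarily reproducing $\overline{\Ee}$.

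The main technical obstacle will be the bookkeeping that distinguishes layered from non-layered cases after splitting the kernel/image. Specifically, I expect the hard part to be showing that whenever neither $\ker(u)$ nor $\Image(u)$ splits off a direct summand isomorphic to $\Oo_H(a)$, the only indecomposable extension (up to twist) is the sheaf $\overline{\Ee}$ of part (1); this amounts to computing an $\Ext^1$ between specific ideal sheaves of plane curves and twists of $\Oo_H$ and checking that exactly one nontrivial class produces an aCM sheaf, all other classes failing either the aCM property or the prescribed rank. Once this rigidity step is in place, combining it with the case-by-case analysis of $A$ closes both parts of the theorem simultaneously.
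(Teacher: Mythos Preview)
Your overall framework---twist to minimally regular, take the canonical map $u:\Ee\to\Ii_A(1)$, and run a case analysis on the linearly-cut subscheme $A$---is exactly what the paper does, and your list of candidates for $A$ matches the paper's Possible Cases \ref{poss}. So the architecture is right.

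There is, however, a genuine conceptual error in how you propose to separate the layered from the non-layered outcomes. You write that the layered patterns (i)--(iii) ``fall out as the cases in which $\ker(u)$ (or a rank-$1/2$ subsheaf extracted from it) is isomorphic to some $\Oo_H(a)$'', and that the non-layered $\overline{\Ee}$ is the residual case where ``neither $\ker(u)$ nor $\Image(u)$ splits off a direct summand isomorphic to $\Oo_H(a)$''. This criterion is wrong and would misclassify the key case. In the paper the non-layered sheaf arises precisely in case~(e), where $A=\{p\}$ is a single reduced point: there one finds $\ker(u)\cong\Oo_H(-1)$---so the kernel \emph{is} a twist of $\Oo_H$---and the obstruction to layeredness lives entirely in the quotient $\Ii_p(1)$, which is \emph{not} aCM (it has $h^1(\Ii_p(-1))=1$) and hence admits no filtration with quotients $\Oo_H(a_i)$. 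The correct dividing line is therefore not whether $\ker(u)$ contains a copy of $\Oo_H(a)$, but whether the quotient $\Ii_A(1)$ is itself layered as an $\Oo_X$-sheaf; this fails exactly when $A$ is zero-dimensional. Until you fix this, the case analysis will not close, because your proposed criterion places the non-layered example on the wrong side.

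Two smaller points. First, the paper constructs $\overline{\Ee}$ not by writing down a matrix factorisation, but directly as the unique non-trivial $\Oo_X$-extension $0\to\Oo_H(-1)\to\overline{\Ee}\to\Ii_p(1)\to 0$, after computing $\mathrm{ext}^1_X(\Ii_p(1),\Oo_H(-1))=1$; the Ulrich property is then checked by tracking the coboundary maps $\delta_t$ in the long exact sequence, and the independence of $p$ is obtained by a semicontinuity argument on the two-dimensional family $\PP\Hom_X(\overline{\Ee},\Oo_X(1))$. Your matrix-factorisation route is viable in principle but you would still need to prove uniqueness, for which the paper's extension description is more convenient. Second, your sketch of stability (``any destabilising subsheaf would produce a layering'') is not how the argument goes: the paper first gets semistability for free from the Ulrich property, then rules out strict semistability by showing that a rank-$1/2$ Ulrich subsheaf $\Oo_H\hookrightarrow\overline{\Ee}$ would force $\overline{\Ee}/\Oo_H$ to be one of two explicit sheaves, each leading to a contradiction; the presence of an $\Oo_H$ subsheaf does not by itself produce a layered filtration.
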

\noindent The only non-layered sheaf $\overline{\Ee}$ in (1) of Theorem \ref{iii} is a non-trivial extension of $\Ii_p(1)$ by $\Oo_H(-1)$ as $\Oo_X$-sheaves, where $\Ii_p(1)$ is the ideal sheaf of a point $p\in X$. It turns out that the sheaf $\overline{\Ee}$ is independent of the choice of the point $p\in X$ up to twists; we refer to Propositions \ref{aprop3} and \ref{aprop4}.  For the description of type (2-i) we refer to Lemma \ref{aaa1} with Remark \ref{aaaa1}. In case $a=d=1$ of type (2-i) we get Ulrich sheaves. By Lemma \ref{iid} any sheaf fitting into the non-trivial sequence in (2-ii) is indecomposable. In fact, the isomorphism classes of such sheaves with $a>\deg (C)$ are parametrized by the orbits of $\mathrm{Aut}(\Ii_C(a))$ acting on $\Ext_X^1(\Oo_H, \Ii_C(a))\setminus \{0\}$. Lastly the description of type (2-iii) may be seen in Example \ref{bbb11} and \ref{bbb1}.

Then we describe the (non)-existence of (non)-layered Ulrich sheaves on $X$. It turns out that there is no layered Ulrich vector bundle on $X$, while there exist some non-layered Ulrich vector bundles of rank divisible by four; see Propositions \ref{ff2.1-} and \ref{u1}. Indeed, there exists a layered indecomposable Ulrich sheaf with arbitrary half integral rank; see Theorem \ref{tthhmm}.

Let us summarize here the structure of this paper. In section $2$ we introduce the definition of aCM sheaves and a generalized notion of rank, possibly not an integer. In section $3$ we collect several technical results on the restriction of aCM vector bundle and show the existence of an aCM vector bundle that does not split of arbitrary rank on an arbitrary generalized hyperplane arrangement, when it is embedded linearly into a sufficiently high dimensional projective space. In section $4$ we show that every aCM vector bundle of rank two on $X$ splits. As a generalization we also show that any aCM vector bundle of rank two splits on any sufficiently high dimensional quadric hypersurface. In section $5$ we deal with the aCM sheaves of rank $1/2$ and $1$ to give their complete classification, which induces the wildness of the double plane. We also show the existence of arbitrarily large dimensional family of indecomposable layered aCM sheaves of any rank at least one, which also implies the wildness. In section $6$ we focus our attention to the case of rank $3/2$. We start from calculating numeric data of extension groups on aCM sheaves of lower ranks. Our main result in this section is the unique existence of non-layered aCM sheaf of rank $3/2$ up to a twist, which is also semistable and simple. Then we describe the family of isomorphism classes of non-layered sheaves. Finally in section $7$, we prove the existence of layered indecomposable Ulrich sheaves for each half integral rank.

We are deeply grateful to the anonymous referee for numerous corrections and very stimulating observations.


\section{Preliminaries}
Throughout the article our base field $\mathbf{k}$ is algebraically closed of characteristic $0$. We always assume that our projective schemes $X\subset \PP^N$ have pure dimension at least two and are arithmetically Cohen-Macaulay, namely, $h^1(\Ii _X(t)) =0$ for all $t\in \ZZ$ and $h^i(\Oo _X(t)) =0$ for all $t\in \ZZ$ and all $i=1,\dots ,\dim X-1$. Then by \cite[Th\'eor\`eme 1 in page 268]{serre} all local rings $\Oo _{X,x}$ are Cohen-Macaulay of dimension $\dim X$. From $h^1(\Ii _X)=0$ we see that $X_{\mathrm{red}}$ is connected. Since in all our results we have $N =\dim X+1$, the reader may just assume that $X$ is a hypersurface. For a vector bundle $\Ee$ of rank $r\in \ZZ$ on $X$, we say that $\Ee$ {\it splits} if $\Ee \cong \oplus_{i=1}^r \Oo_X(t_i)$ for some $t_i\in \ZZ$ with $i=1,\ldots, r$.

We always fix the embedding $X\subset \PP^N$ and the associated polarization $\Oo_X(1)$. For a coherent sheaf $\Ee$ on a closed subscheme $X$ of a fixed projective space, we denote $\Ee \otimes \Oo_X(t)$ by $\Ee(t)$ for $t\in \ZZ$. For another coherent sheaf $\Gg$, we denote by $\mathrm{hom}_X(\Ff, \Gg)$ the dimension of $\Hom_X(\Ff, \Gg)$, and by $\mathrm{ext}_X^i(\Ff, \Gg)$ the dimension of $\mathrm{Ext}_X^i (\Ff, \Gg)$.

Now recall that the {\it depth} of a module $M$ over a local ring $A$ is defined to be the maximal length of $M$-regular sequence; see \cite[page 4]{hl}. We say that a coherent sheaf $\Ee$ on $X$ has pure depth $k$, if the depth of $\Ee_x$ over $\Oo_{X,x}$ is $k$ for all $x\in X$. We denote the pure depth by $\mathrm{depth}(\Ee)$. For a full account on depth and related properties, see \cite[Chapter 1]{BH}.

\begin{definition}\label{deff}
A coherent sheaf $\Ee$ on $X\subset\PP^N$ is called {\it arithmetically Cohen-Macaulay} (for short, aCM) if the following hold:
\begin{itemize}
\item [(i)] the dimension of the support of $\Ee$ is equal to $\dim (X)$,
\item [(ii)] the stalk $\Ee_x$ has positive depth for any point $x$ on $X$, and
\item [(iii)] $H^i(\Ee(t))=0$ for all $t\in \ZZ$ and $i=1, \ldots, \dim (X)-1$.
\end{itemize}
\end{definition}

\begin{remark}\label{rremm}
\begin{itemize}
\item [(i)] Since $\Ee$ is coherent, the condition in Definition \ref{deff} that the stalk $\Ee_x$ has positive depth for each $x\in X$ is equivalent to $H^0(\Ee(-t))=0$ for $t\gg 0$ by \cite[Th\'eor\`eme 1 in page 268]{serre}. Furthermore, if $H^1(\Ee(-t))=0$ for all $t\gg 0$, then every stalk of $\Ee$ has depth at least two also by \cite[Th\'eor\`eme 1 in page 268]{serre}. This implies that it has depth $\dim (X)$, namely it is locally Cohen-Macaulay, again by \cite[Th\'eor\`eme 1 in page 268]{serre} together with the vanishing of the intermediate cohomologies of $\Ee$.
\item [(ii)] Notice that being aCM does not depend on a twist of $\Ee$ by $\Oo_X(1)$.
\end{itemize}
\end{remark}

If $\Ee\not\cong 0$ is a coherent sheaf on a closed subscheme $X$ of a fixed projective space, then we may consider its Hilbert polynomial $\mathrm{P}_{\Ee}(m)\in \QQ [m]$ with the leading coefficient $\mu(\Ee)/d!$, where $d$ is the dimension of $\mathrm{Supp}(\Ee)$ and $\mu=\mu(\Ee)$ is called the {\it multiplicity} of $\Ee$. The {\it normalized} Hilbert polynomial of $\Ee$ is defined to be the Hilbert polynomial of $\Ee$ divided by $\mu (\Ee)$.

\begin{definition}\label{def}
If $\dim \mathrm{Supp}(\Ee)=\dim (X)$, then the {\it rank} of $\Ee$ is defined to be
$$\mathrm{rank}(\Ee)=\frac{\mu(\Ee)}{\mu(\Oo_X)}.$$
Otherwise it is defined to be zero.
\end{definition}
For an integral scheme $X$, the rank of $\Ee$ is the dimension of the stalk $\Ee_x$ at the generic point $x\in X$. But in general $\mathrm{rank}(\Ee)$ needs not be integer.

\begin{definition}
An {\it initialized} coherent sheaf $\Ee$ on $X\subset\PP^N$ (i.e. $0=h^0(\Ee(-1))< h^0(\Ee)$) is called an {\it Ulrich sheaf} if it is aCM and $h^0(\Ee)=\deg (X)\mathrm{rank}(\Ee)$.
\end{definition}

Ulrich sheaves have received a lot of attention during the last years. It is a central problem on this area to know which (if all) projective schemes support Ulrich sheaves. We refer the reader to \cite{beauville} and \cite{ESW} for a complete introduction to the theory of Ulrich sheaves.

The following definition will be used extensively throughout the paper:

\begin{definition}
A coherent sheaf $\Ee$ of positive depth on $X\subset\PP^N$ is called \emph{minimally regular} if it is $0$-regular but it is not $(-1)$-regular.
\end{definition}

For any coherent sheaf $\Ee$ of positive depth there exists $t\in\ZZ$ such that $\Ee(t)$ becomes minimally regular. Let us recall that a minimally regular sheaf is globally generated. Notice also that any Ulrich sheaf is minimally regular; see \cite{ESW}.

Let $S=\mathbf{k}[x_0, \ldots, x_n]$ and $f\in S$ be a nonzero homogeneous element in the irrelevant ideal. Then an aCM sheaf $\Ee$ on the hypersurface $X=V(f)$ is given as the cokernel of the map $\mathrm{M}$:
\begin{equation}\label{mf}
0\to \oplus_{i=1}^e\Oo_{\PP^n}(a_i) \stackrel{\mathrm{M}}{\to} \oplus_{i=1}^e\Oo_{\PP^n}(b_i)\to \Ee \to 0,
\end{equation}
where $\mathrm{M}=(m_{ij})$ is a square matrix of order $e$ with homogeneous entries of degree $\mathrm{max}\{b_i-a_j, 0\}$ in $S$; see \cite[Theorem A]{B}. In particular we get $\det (\mathrm{M})=f^{a}$ with $a=\sum_{i=1}^e (b_i-a_i)/\deg (f)$. If $X$ is irreducible, then we get $a=\mathrm{rank}(\Ee)$; see \cite[Proposition 5.6]{E}.

Now we pay our attention to a special type of schemes, an arbitrary finite union of hyperplanes of $\PP^{n+1}$ with prescribed multiplicities: fix $k$ positive integers $m_1, \ldots, m_k$ and $k$ distinct hyperplanes $M_1, \ldots, M_k$ of $\PP^{n+1}$ such that $M_1\cup \cdots \cup M_k$ is not necessarily a normal crossing divisor. Set $m:=m_1+\dots + m_k$ and
$$X=X_n[m_1M_1, \dots, m_kM_k]:=m_1M_1\cup \dots \cup m_kM_k$$
as a hypersurface of degree $m$ in $\PP^{n+1}$. Thus it is a polarized projective scheme with $\Oo_X(1)$ as its polarization.

\begin{lemma}\label{bo2}
For $X= X_n[m_1M_1, \dots ,m_kM_k]$, we have $\mathrm{Pic}(X) \cong \ZZ\langle \Oo _X(1)\rangle$.
\end{lemma}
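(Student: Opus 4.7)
The plan is induction on the total multiplicity $m := m_1+\cdots+m_k$, with base case $m=1$ trivial (then $X=\PP^n$, so $\mathrm{Pic}(X)=\ZZ\langle\Oo_{\PP^n}(1)\rangle$). For the inductive step I would single out the component $M_1$ and peel off one layer, distinguishing whether $m_1\geq 2$ or $X$ is already reduced.

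If $m_1\geq 2$, set $X':=X_n[(m_1-1)M_1,m_2M_2,\ldots,m_kM_k]$, of total multiplicity $m-1$. A direct local computation along $M_1$ identifies the ideal sheaf $\Ii:=\Ii_{X'/X}$ as an $\Oo_X$-module with $\Oo_{M_1}(-m+1)$ and shows that $\Ii$ is square-zero inside $\Oo_X$ (using $m_1\geq 2$ to write $\ell_1^{2(m_1-1)}u^2=\ell_1^{m_1}u\cdot\ell_1^{m_1-2}u\in(f_X)$, with $u=\ell_2^{m_2}\cdots\ell_k^{m_k}$). Square-nilpotence makes $f\mapsto 1+f$ give the short exact sequence $0\to\Oo_{M_1}(-m+1)\to\Oo_X^*\to\Oo_{X'}^*\to 1$ of sheaves of abelian groups, whose cohomology produces
\[
H^1(\PP^n,\Oo(-m+1))\to\mathrm{Pic}(X)\to\mathrm{Pic}(X')\to H^2(\PP^n,\Oo(-m+1)).
\]
Since $n\geq 2$ forces $H^1(\PP^n,\Oo(d))=0$ for every $d$, the restriction map $\mathrm{Pic}(X)\hookrightarrow\mathrm{Pic}(X')$ is injective; combining with the inductive hypothesis $\mathrm{Pic}(X')=\ZZ\langle\Oo_{X'}(1)\rangle$ and the fact that $\Oo_X(1)$ pulls back to $\Oo_{X'}(1)$ yields $\mathrm{Pic}(X)=\ZZ\langle\Oo_X(1)\rangle$.

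If instead all $m_i=1$ and $k\geq 2$, the ideal $\Ii_{X'/X}$ is no longer square-zero and the exponential-type argument fails; here I switch to Mayer-Vietoris. Taking $X':=M_2\cup\cdots\cup M_k$, distinctness of the $\ell_i$ gives $\Ii_X=\Ii_{X'}\cap\Ii_{M_1}$, so $X=X'\cup M_1$ scheme-theoretically, producing
\[
\mathrm{Pic}(X)\hookrightarrow\mathrm{Pic}(X')\oplus\mathrm{Pic}(M_1)\to\mathrm{Pic}(Z)
\]
with $Z:=X'\cap M_1$ a union of $k-1$ hyperplanes in $M_1\cong\PP^n$. By induction $\mathrm{Pic}(X')\cong\mathrm{Pic}(M_1)\cong\ZZ$, and the restriction map sends $(a,b)$ to $(a-b)\Oo_Z(1)$. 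The hardest point is the case $n=2$, where $Z$ is a one-dimensional nodal configuration of lines in $\PP^2$ and $\mathrm{Pic}(Z)$ can acquire continuous parts beyond $\ZZ$, so the lemma cannot be invoked inductively for $Z$ itself. The rescue is that we never need $\mathrm{Pic}(Z)$ in full; only the image of $\ZZ\oplus\ZZ$ in $\mathrm{Pic}(Z)$ matters, and this is controlled by the infinite order of $\Oo_Z(1)$ (positive degree on every component). The kernel is therefore the diagonal $\ZZ$, corresponding to $\Oo_X(1)$.
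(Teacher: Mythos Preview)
Your argument is correct and follows a genuinely different route from the paper. The paper runs the same induction on $m$ but never invokes the exponential sequence or Mayer--Vietoris: for a line bundle $\Ll$ with $\Ll_{|M_k}\cong\Oo_{M_k}$, it tensors the structure sequence $0\to\Oo_{M_k}(1-m)\to\Oo_X\to\Oo_Y\to 0$ by $\Ll$, uses $H^0(\Oo_{M_k}(1-m))=H^1(\Oo_{M_k}(1-m))=0$ to make $H^0(\Ll)\to H^0(\Ll_{|Y})$ bijective, and then lifts a nowhere-vanishing section from $Y$ to $X$. In the reduced case the paper first propagates triviality of $\Ll$ to every $M_i$ via the pairwise intersections $M_i\cap M_k$ before applying the inductive hypothesis on $Y$. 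Your cohomological packaging is arguably cleaner in the non-reduced case (the square-zero exponential is the canonical tool for infinitesimal thickenings), whereas the paper's section-lifting argument is more uniform across the two cases and sidesteps the need to set up Mayer--Vietoris for $\Oo^*$. One point you leave implicit and should state: the injectivity of $\mathrm{Pic}(X)\hookrightarrow\mathrm{Pic}(X')\oplus\mathrm{Pic}(M_1)$ in the reduced case needs surjectivity of $H^0(\Oo_{X'}^*)\oplus H^0(\Oo_{M_1}^*)\to H^0(\Oo_Z^*)$, which holds because $Z$ is a hypersurface in $M_1\cong\PP^n$ with $n\ge 2$, hence connected with $H^0(\Oo_Z)=\mathbf{k}$.
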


\begin{proof}
Set $m:= m_1+\cdots +m_k$. Since $\mathrm{Pic}(\PP^n) \cong \ZZ \langle \Oo _{\PP^n}(1)\rangle$, we may assume $m\ge 2$ and use induction on $m$, i.e. we assume that the lemma is true for smaller multiplicities. For a fixed $\Ll \in \mathrm{Pic}(X)$, it is sufficient to prove that if $\Ll _{|M_k} \cong \Oo
_{M_k}$, then $\Ll \cong \Oo _X$. Set $Y:= X_n[m_1M_1, \dots ,(m_k-1)M_k]$, with the convention that $M_k$ does not appear inside the square brackets if $m_k=1$. Since $m\ge 2$ and $\Ll _{|M_k} \cong \Oo _{M_k}$, by tensoring the exact sequence
$$0\to \Oo _{M_k}(1-m) \to \Oo _X\to \Oo _Y\to 0$$
with $\Ll$, we get that the restriction map $H^0(\Ll )\rightarrow H^0(\Ll _{|Y})$ is bijective.

Assume for the moment $m_k\ge 2$, i.e. $M_k\subseteq Y$. By the inductive assumption we have $\Ll _{|Y}\cong \Oo _Y$. Thus we get $h^0(\Ll )=1$ and a section $\sigma \in H^0(\Ll)$ with no zero at every point of $Y_{\mathrm{red}} =X_{\mathrm{red}}$. Thus $\sigma:\Oo _X\rightarrow \Ll$ is an isomorphism.

Now assume $m_k=1$. Exchanging the labels of the planes we see that it is sufficient to prove the assertion in the case $m_i=1$ for all $i$, in which we have $m=k$. Fix $i\in \{1,\dots ,k-1\}$ and set $L_i:= M_i\cap M_k$. Since we have $\Ll _{|M_k}\cong \Oo _{M_k}$, we get $\Ll _{|L_i} \cong \Oo _{L_i}$, in particular $\Ll _{|M_i}\cong \Oo _{M_i}$ for all $i$. The inductive assumption on $m$ gives $\Ll _{|Y} \cong \Oo _Y$ and we may repeat the argument given for the case $m_k\ge 2$.
\end{proof}

In particular, there is no ambiguity on the choice of the ample generator $\Oo_X(1)$ of $\mathrm{Pic}(X)$ on $X=X_n[m_1M_1,\dots , m_kM_k]$ with respect to which we consider aCM sheaves on $X$. Notice, moreover, than on $X$, as on any hypersurface, any line bundle $\Oo_X(t)$ is aCM. In particular, the structure sheaf $\Oo_{M_i}$ of the reduction of any of the components of $X$ is again aCM as an $\Oo_X$-sheaf.

As a special case, let us assume that $k=1$; for a fixed hyperplane $H_n \subset \PP^{n+1}$ with $n\ge 2$, define a projective scheme $X_n[m]:=X_n[mH_n]$ to be the effective Cartier divisor $mH_n$ for $m>0$. It has the dualizing sheaf $\omega _{X_n[m]} \cong \Oo _{X_n[m]}(m-n-2)$. For example, in case $n=2$, we have $X=X_2[2]$ the double plane. If $w$ is a defining equation of $H_n$, then for $m\ge 2$, the restriction map $\Oo_{X_n[m]} \rightarrow \Oo_{X_n[m-1]}$ has kernel isomorphic to $\Oo_{H_n}(1-m)$:
\begin{equation}\label{eqgl}
0\to \Oo_{H_n}(1-m) \to \Oo_{X_n[m]} \to \Oo_{X_n[m-1]} \to 0.
\end{equation}

If $\Ee$ is an aCM sheaf on $X_n[m]$, then it is given as the cokernel of the map defined by the matrix $\mathrm{M}$ in (\ref{mf}) with $\det (\mathrm{M})=w^{ma}$ for some $a\in {\left(\frac{1}{m}\right)}{\NN}$. Note that by Definition \ref{def}, the rank of $\Ee$ belongs to ${\left(\frac{1}{m}\right)}{\NN}$.

\begin{remark}
For an aCM sheaf $\Ee\cong \Oo_H(a)$ of rank $1/2$ on the double plane $X=X_2[2H]$, the corresponding matrix in (\ref{mf}) is $\mathrm{M}=(w)$. Thus $\Ee$ is isomorphic to the cokernel of the map $\Oo_{\PP^3}(-1)\rightarrow \Oo_{\PP^3}$ given by the multiplication by $w$, up to twist. Recall that $H$ is the plane given by $w=0$.
\end{remark}

Now we introduce a special type of coherent sheaves as in \cite[1st Definition at page 318]{ch} and \cite[Definition 6.5]{CH}.

\begin{definition}\label{llay}
Fix $r\in {\left(\frac{1}{m}\right)}{\NN}$. A coherent sheaf $\Ee$ of rank $r$ on $X_n[m]$ is said to be {\emph{layered}} if there exists a filtration $0 =\Ee _0\subset \Ee _1\subset \cdots \subset \Ee _{mr-1} \subset \Ee _{mr} =\Ee$ of $\Ee$ with $\Ee _i/\Ee _{i-1} \cong \Oo _{H_n}(a_i)$ with $a_i\in \ZZ$ for all $i=1,\dots ,mr$.
\end{definition}

\noindent It is automatically true by definition that any layered coherent sheaf on $X_n[m]$ is aCM.

We end the section with a technical Lemma:

\begin{lemma}\label{ttt}
We have
$$\Ext_{\PP^{n+1}}^1(\Oo_{H_n}(a), \Oo_{X_n[m]})\cong H^0(\Oo_{H_n}(1-a))$$
for all $a,m \in \ZZ$ with $m>0$.
\end{lemma}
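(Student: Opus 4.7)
The plan is to apply $\Hom_{\PP^{n+1}}(-,\Oo_{X_n[m]})$ to the Koszul-type resolution
$$0\to \Oo_{\PP^{n+1}}(a-1)\xrightarrow{w}\Oo_{\PP^{n+1}}(a)\to \Oo_{H_n}(a)\to 0,$$
where $w$ is a linear form defining $H_n$. Using $\Ext^i_{\PP^{n+1}}(\Oo_{\PP^{n+1}}(b),\Oo_{X_n[m]})=H^i(\Oo_{X_n[m]}(-b))$ and the fact that $X_n[m]$ is an aCM hypersurface of dimension $n\ge 2$, which gives $H^1(\Oo_{X_n[m]}(t))=0$ for every $t\in\ZZ$, the resulting long exact sequence identifies the desired $\Ext^1$ with the cokernel of the multiplication by $w$ on global sections,
$$H^0(\Oo_{X_n[m]}(-a))\xrightarrow{\cdot w}H^0(\Oo_{X_n[m]}(1-a)).$$

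The main step is then to identify this cokernel with $H^0(\Oo_{H_n}(1-a))$. For this I would factor the sheaf map $\cdot w$ as a surjection onto $\Oo_{X_n[m-1]}(-a)$ followed by an inclusion into $\Oo_{X_n[m]}(1-a)$. The surjection is (\ref{eqgl}) twisted by $-a$, with kernel $\Oo_{H_n}(1-m-a)$; the inclusion comes from twisting the reduction-modulo-$w$ sequence $0\to \Oo_{X_n[m-1]}(-1)\to \Oo_{X_n[m]}\to \Oo_{H_n}\to 0$ by $1-a$, with cokernel $\Oo_{H_n}(1-a)$. Thus one obtains two short exact sequences
$$0 \to \Oo_{H_n}(1-m-a) \to \Oo_{X_n[m]}(-a) \to \Oo_{X_n[m-1]}(-a) \to 0,$$
$$0 \to \Oo_{X_n[m-1]}(-a) \to \Oo_{X_n[m]}(1-a) \to \Oo_{H_n}(1-a) \to 0,$$
whose composition is precisely $\cdot w$.

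Taking cohomology of the first sequence and using $H^1(\Oo_{\PP^n}(t))=0$ for all $t$ (valid since $n\ge 2$), the map $H^0(\Oo_{X_n[m]}(-a))\to H^0(\Oo_{X_n[m-1]}(-a))$ is surjective; taking cohomology of the second and using that $X_n[m-1]$ is aCM of dimension $n\ge 2$ so that $H^1(\Oo_{X_n[m-1]}(-a))=0$, the cokernel of $H^0(\Oo_{X_n[m-1]}(-a))\to H^0(\Oo_{X_n[m]}(1-a))$ is $H^0(\Oo_{H_n}(1-a))$. Combining the two identifies $\Ext^1_{\PP^{n+1}}(\Oo_{H_n}(a),\Oo_{X_n[m]})$ with $H^0(\Oo_{H_n}(1-a))$. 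The only mildly delicate point is the boundary case $m=1$, where I adopt the convention $\Oo_{X_n[0]}=0$: then $\cdot w$ on $\Oo_{H_n}$ vanishes identically and the cokernel is trivially $H^0(\Oo_{H_n}(1-a))$, recovering the familiar normal-bundle identification.
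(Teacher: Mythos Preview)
Your argument is correct and complete; the factorisation of $\cdot w$ through $\Oo_{X_n[m-1]}$ is exactly right, and the two vanishings $H^1(\Oo_{H_n}(1-m-a))=0$ and $H^1(\Oo_{X_n[m-1]}(-a))=0$ do the job. The boundary case $m=1$ is handled cleanly.

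Your route is genuinely different from the paper's. The paper applies $\Hom_{\PP^{n+1}}(\Oo_{H_n}(a),-)$ to the structure sequence $0\to \Oo_{\PP^{n+1}}(-m)\to \Oo_{\PP^{n+1}}\to \Oo_{X_n[m]}\to 0$, whereas you apply $\Hom_{\PP^{n+1}}(-,\Oo_{X_n[m]})$ to the resolution $0\to \Oo_{\PP^{n+1}}(a-1)\to \Oo_{\PP^{n+1}}(a)\to \Oo_{H_n}(a)\to 0$. The paper then has to compute $\Ext^i_{\PP^{n+1}}(\Oo_{H_n}(a),\Oo_{\PP^{n+1}}(b))$ via Serre duality on $\PP^{n+1}$ and on $H_n$, and must separately verify that the connecting map $\Hom(\Oo_{H_n}(a),\Oo_{X_n[m]})\to \Ext^1(\Oo_{H_n}(a),\Oo_{\PP^{n+1}}(-m))$ is an isomorphism by a dimension count. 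Your approach avoids Serre duality entirely, reducing everything to the vanishing of $H^1$ on aCM schemes of dimension $\ge 2$; it is more elementary and arguably more transparent, since the answer $H^0(\Oo_{H_n}(1-a))$ appears directly as the $H^0$ of the cokernel sheaf. The paper's approach, on the other hand, makes visible the intermediate groups $\Ext^1(\Oo_{H_n}(a),\Oo_{\PP^{n+1}})$, which is sometimes useful when one wants to compare extensions over $\PP^{n+1}$ and over $X_n[m]$.
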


\begin{proof}
Applying the functor $\Hom_{\PP^{n+1}}(\Oo_{H_n}(a), -)$ to
\begin{equation}\label{3e3}
0\to \Oo_{\PP^{n+1}}(-m)\to \Oo_{\PP^{n+1}}\to \Oo_{X_n[m]}\to 0,
\end{equation}
we get
\begin{equation}
\begin{split}
0&\to \Hom_{\PP^{n+1}}(\Oo_{H_n}(a), \Oo_{X_n[m]}) \to \Ext_{\PP^{n+1}}^1 (\Oo_{H_n}(a), \Oo_{\PP^{n+1}}(-m)) \\
&\to \Ext_{\PP^{n+1}}^1 (\Oo_{H_n}(a), \Oo_{\PP^{n+1}}) \to \Ext_{\PP^{n+1}}^1(\Oo_{H_n}(a), \Oo_{X_n[m]}) \\
&\to \Ext_{\PP^{n+1}}^2(\Oo_{H_n}(a), \Oo_{\PP^{n+1}}(-m)),
\end{split}
\end{equation}
where the last term $\Ext_{\PP^{n+1}}^2(\Oo_{H_n}(a), \Oo_{\PP^{n+1}}(-m)) \cong H^{n-1}(\Oo_{H_n}(a+m-n-2))^\vee$ is trivial. The first map is an isomorphism, because we have the following from (\ref{eqgl}) and Serre's duality
\begin{align*}
&\Hom_{\PP^{n+1}}(\Oo_{H_n}(a), \Oo_{X_n[m]}) \supseteq \Hom_{\PP^{n+1}}(\Oo_{H_n}(a), \Oo_{H_n}(1-m)) \cong H^0(\Oo_{H_n}(1-a-m))\\
&\Ext_{\PP^{n+1}}^1(\Oo_{H_n}(a), \Oo_{\PP^{n+1}}(-m))\cong H^n(\Oo_{H_n}(a+m-n-2))^\vee \cong H^0(\Oo_{H_n}(1-a-m)).
\end{align*}
Now the assertion follows from the isomorphism \[\Ext_{\PP^{n+1}}^1(\Oo_{H_n}(a), \Oo_{\PP^{n+1}}) \cong H^0(\Oo_{H_n}(1-a)).\qedhere\]
\end{proof}

\section{aCM vector bundles on hyperplane arrangements with multiplicities}

In this section we are going to consider aCM vector bundles $\Ee$ of rank $r\geq 2$ on $X_n[m]$ with $m\ge 2$.

\begin{lemma}\label{g1}
For two integers $m,n \ge 2$, and for each integer $s\ge 2n+2$, there exist in $\PP^s$
\begin{itemize}
\item a smooth hypersurface $Y$ of degree $m$, and
\item an $(n+1)$-dimensional linear subspace $M$
\end{itemize}
such that $Y\cap M =X_n[m]$.
\end{lemma}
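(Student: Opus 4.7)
The plan is to construct $Y$ explicitly as a generic member of a carefully chosen linear system in $\PP^s$, then verify smoothness via a Bertini-type argument combined with a dimension count.

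Fix homogeneous coordinates $x_0,\ldots,x_s$ on $\PP^s$. I would take $M = V(x_{n+2},\ldots,x_s) \cong \PP^{n+1}$ and, inside $M$, take $H_n = V(x_0)\cap M$, so that $X_n[m]\subset M$ is cut out by $x_0^m$. For $Y$ to satisfy $Y\cap M = X_n[m]$, its defining equation $F\in H^0(\Oo_{\PP^s}(m))$ must restrict to a scalar multiple of $x_0^m$ on $M$. After normalization, this amounts to
\[
F \;=\; x_0^m \;+\; \sum_{j=n+2}^{s} x_j\,G_j,
\]
where each $G_j\in H^0(\Oo_{\PP^s}(m-1))$. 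Let $V\subseteq H^0(\Oo_{\PP^s}(m))$ denote the (non-empty) linear system of such $F$.

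First I would check the base locus of $V$. If $p\in M$, then $F(p)=x_0(p)^m$, so $F(p)=0$ for all $F\in V$ forces $p\in H_n$. If $p\notin M$, some $x_j(p)\neq 0$ for $j\geq n+2$, so an appropriate choice of $G_j$ makes $F(p)\neq 0$. Hence the base locus equals $H_n$. Bertini then guarantees that for a generic $F\in V$, the hypersurface $Y=V(F)$ is smooth away from $H_n$.

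Next I would compute partial derivatives at a point $p\in H_n$ (where $x_0(p)=0$ and $x_j(p)=0$ for $j\geq n+2$):
\[
\tfrac{\partial F}{\partial x_0}(p)=0,\qquad
\tfrac{\partial F}{\partial x_i}(p)=0 \ \ (1\leq i\leq n+1),\qquad
\tfrac{\partial F}{\partial x_j}(p)=G_j(p) \ \ (j\geq n+2).
\]
Thus $Y$ is singular at $p\in H_n$ precisely when $G_{n+2}(p)=\cdots=G_s(p)=0$. The question reduces to choosing $s-n-1$ forms of degree $m-1\geq 1$ on $H_n\cong \PP^n$ with empty common zero locus. Since the hypothesis $s\geq 2n+2$ gives $s-n-1\geq n+1$, and $n+1$ generic hypersurfaces of positive degree on $\PP^n$ have empty intersection, a generic choice of the $G_j$ achieves this. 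Combined with Bertini on the complement of $H_n$, the resulting $Y$ is smooth everywhere, and by construction $Y\cap M = V(x_0^m)\subset M$ equals $X_n[m]$.

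The only delicate point is making sure both Bertini (off the base locus) and the dimension count (on $H_n$) can be satisfied simultaneously by a \emph{single} generic $F\in V$; this is automatic since both conditions are open and non-empty in $V$, so their intersection is non-empty as well.
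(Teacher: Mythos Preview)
Your argument is correct. Both you and the paper set up the same coordinates, the same linear subspace $M=V(x_{n+2},\dots,x_s)$, and the same hyperplane $H_n=V(x_0)\cap M$, and both look at polynomials of the shape $x_0^m+\sum_{j\ge n+2}x_jG_j$. The difference lies in how smoothness is secured: you argue generically, invoking Bertini off the base locus $H_n$ and a dimension count on $H_n$ (using $s-n-1\ge n+1$ so that generic degree-$(m-1)$ forms on $\PP^n$ have empty common vanishing), whereas the paper simply writes down one explicit polynomial,
\[
f=x_0^m+\sum_{k=1}^{n+1}x_k^{\,m-1}x_{k+n+1}+\sum_{i\ge n+2}x_i^m,
\]
and (implicitly) checks directly from the Jacobian that $V(f)$ is smooth. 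In your framework this amounts to the particular choice $G_{k+n+1}=x_k^{\,m-1}+x_{k+n+1}^{\,m-1}$, so the paper's proof is a witness to the non-emptiness of the open set you describe. Amusingly, the paper's own Remark following the lemma says that the authors' original proof was a dimension-counting argument---essentially yours---and the explicit equation was supplied by the referee; so you have rediscovered the first version. The explicit construction is shorter and avoids Bertini, while your approach makes transparent exactly where the bound $s\ge 2n+2$ enters.
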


\begin{proof}
We fix homogeneous coordinates $[x_0:\ldots : x_s]$ of $\PP^s$ and write $M:= \{x_{n+2}= \cdots = x_s=0\}$ and $H_n:= \{x_{0}= x_{n+2}=\cdots =x_s=0\} \subset M$, i.e. $H_n$ is the hyperplane in $M$ defined by $x_0=0$. Consider the smooth hypersurface $Y=V(f)\in |\Oo _{\PP^s}(m)|$ for 
\begin{equation}\label{eqaaabb1}
f =x_0^m+\sum _{k =1}^{n+1}  x^{m-1}_k x_{k+n+1} +\sum _{i\ge n+2} x_i^m.
\end{equation}
Then we have $Y\cap M =X_n[m]$.
\end{proof}

\begin{remark}
The assertion in Lemma \ref{g1} was originally proved by dimension counting, while the current proof using the explicit equation (\ref{eqaaabb1}) for a smooth hypersurface is given by the referee. 
\end{remark}

\begin{lemma}\label{g2}
Fix a hypersurface $Y$ and a hyperplane $M$ in $\PP^s$ with $s\ge 3$ such that $M$ is not a component of $Y$. For an aCM vector bundle $\Ee$ of rank $r$ on $Y$, if $\Ee _{|Y\cap M}\cong \oplus _{i=1}^{r} \Oo _{Y\cap M}(a_i)$ for some $a_i\in \ZZ$, then we have $\Ee \cong \oplus _{i=1}^{r} \Oo _Y(a_i)$
\end{lemma}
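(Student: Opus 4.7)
The plan is to construct a morphism $\phi: F = \bigoplus_{i=1}^r \Oo_Y(a_i) \to \Ee$ whose restriction to $Y \cap M$ recovers the given splitting, and then to show that $\phi$ is an isomorphism on all of $Y$. Let $f \in H^0(\Oo_{\PP^s}(1))$ be a linear form defining $M$. Since $M$ is not a component of $Y$, multiplication by $f$ yields the short exact sequence
$$0 \to \Ee(-1) \to \Ee \to \Ee_{|Y\cap M} \to 0$$
on $Y$. Because $\Ee$ is aCM and $\dim Y = s-1 \ge 2$, one has $H^1(\Ee(t)) = 0$ for every $t \in \ZZ$, so the restriction map $H^0(\Ee(t)) \twoheadrightarrow H^0(\Ee_{|Y\cap M}(t))$ is surjective. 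Using this, I would lift each of the $r$ canonical generators of $\bigoplus_i \Oo_{Y\cap M}(a_i)$ (the standard section $1$ of each summand, viewed in $H^0(\Ee_{|Y\cap M}(-a_i))$ via the assumed splitting) to a section of $\Ee(-a_i)$, and assemble the results into a morphism $\phi: F \to \Ee$ with $\phi_{|Y \cap M}$ equal to the prescribed isomorphism.

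To upgrade $\phi$ to an isomorphism, I would pass to the determinant. Since $\phi$ is a morphism between rank-$r$ locally free sheaves, $\det \phi$ is a section of the line bundle $L := \det \Ee \otimes (\det F)^{-1}$. At every point $y \in Y \cap M$ the matrix of $\phi_y$ is invertible modulo the local equation of $M$, so by Nakayama $\phi_y$ is itself an isomorphism; in particular the zero scheme $Z := \{\det \phi = 0\}$ is disjoint from $Y \cap M$. Moreover $\det \phi$ cannot vanish on any irreducible component of $Y$, since every such component has dimension $s-1 \ge 2$ and therefore meets the hyperplane $M$, where $\det \phi$ is already a unit. Because $Y$ is Cohen-Macaulay as a hypersurface, this implies that the local expression of $\det \phi$ is a non-zero-divisor at every stalk, so $Z$ is either empty or a closed subscheme of $Y$ of pure dimension $s-2 \ge 1$. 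On the other hand $Y \cap M$ is an ample divisor of $Y$, so $Y \setminus (Y \cap M)$ is affine; the projective closed subscheme $Z \subset Y \setminus (Y \cap M)$ must therefore be zero-dimensional or empty. Combined with purity this forces $Z = \emptyset$, and $\phi$ is a global isomorphism.

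The main subtlety will be that $Y$ is not assumed integral, so a naive ``generic rank'' argument is unavailable and one cannot treat $\det\phi$ as a rational section on a variety. The Cohen-Macaulay property of $Y$ (no embedded associated primes), together with the fact that each irreducible component of $Y$ meets the ample hyperplane section $Y \cap M$, is precisely what is needed to ensure that $\det \phi$ is locally a non-zero-divisor and that its zero locus is either empty or pure of codimension one; only then can the ampleness of $Y \cap M$ be invoked to conclude $Z=\emptyset$.
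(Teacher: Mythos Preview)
Your proof is correct and uses the same core ingredients as the paper's: surjectivity of restriction on sections from the aCM hypothesis, Nakayama to get a local isomorphism along $Y\cap M$, and ampleness of $Y\cap M$ to rule out a degeneracy locus in the complement.

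The structural difference is that you build the full map $\phi:\bigoplus_i \Oo_Y(a_i)\to \Ee$ in one shot and then settle everything with $\det\phi$, whereas the paper proceeds by induction on the distinct values among the $a_i$: it first maps in $\Oo_Y^{\oplus e}$ corresponding to the largest twist, checks that the cokernel is locally free along $(Y\cap M)_{\mathrm{red}}$, then repeats with the next twist, and only in the final full-rank step invokes the principle that an injective map of vector bundles of equal rank is an isomorphism or drops rank on a hypersurface meeting $Y\cap M$. Your determinant argument packages this last step more cleanly and avoids the intermediate bookkeeping about partial quotients; the paper's stepwise approach, on the other hand, makes the splitting visible one summand at a time. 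Both arrive at the same place, and your careful handling of the non-integral case (using that $Y$ is Cohen--Macaulay so $\det\phi$ is a non-zero-divisor once it is a unit at some point of every component) is exactly the right substitute for a generic-rank argument.
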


\begin{proof}
We may assume $0=a_1\ge \cdots \ge a_r$ and let $e$ be the number of indices $i$ with $a_i=0$.
Then we have $h^0(\Ee _{|Y\cap M})=e$ and $h^0(\Ee _{|Y\cap M}(-1)) =0$. Since $\Ee$ is aCM, we get that the restriction map $\rho : H^0(\Ee )\rightarrow H^0(\Ee _{|Y\cap M})$ is bijective. By Nakayama's lemma we also get that the natural map $\eta :H^0(\Ee )\otimes \Oo _Y\rightarrow \Ee$ is injective and that its image at each $p\in (Y\cap M)_{\mathrm{red}}$ spans an $e$-dimensional linear subspace of the fiber $\Ee _{p}$.

If $e=r$, we get that $\eta$ is an isomorphism, because $Y\cap M$ is an ample divisor of $Y$ and an injective map between two vector bundles with the same rank is either an isomorphism or drops rank on a hypersurface, which must intersect $Y\cap M$.

Now assume $e<r$ and let $e'$ be the number of indices $i$ with $a_i=a_{e+1}$, i.e. the number of second biggest numbers among $a_i$'s. Set $\Ff := \mathrm{Im}(\eta )\cong \Oo_Y^{\oplus e}$. Since $\Ee/\Ff$ is locally free at each point of $(Y\cap M)_{\mathrm{red}}$, it is locally free outside a finite set disjoint from $Y\cap M$. Now the natural map $\rho ': H^0(\Ee (-a_{e+1})) \rightarrow H^0(\Ee_{|Y\cap M} (-a_{e+1}))$ is surjective, because $\Ee$ is aCM. From $H^0(\Ff _{|Y\cap M}(-a_{e+1}))\cong H^0( \Oo _{Y\cap M}(-a_{e+1})^{\oplus e})$, we see that there is an $e'$-dimensional linear subspace $V$ of $H^0(\Ee (-a_{e+1}))$ such that $V\cap H^0(\Ff (-a_{e+1})) =0$
and the map
$$H^0(\Ff_{|Y\cap M} (-a_{e+1}))\oplus V \to H^0(\oplus _{i=1}^{e+e'}\Oo _{Y\cap M}(a_i-a_{e+1}))=H^0(\Ee_{|Y\cap M}(-a_{e+1}))$$
is bijective. Then we get a map $\eta ': \Oo_Y(a_1)^{\oplus e}\oplus \Oo _Y(a_{e+1})^{\oplus e'} \rightarrow \Ee$ that is injective and of rank $e+e'$ at each point of  $(Y\cap M)_{\mathrm{red}}$. If $r=e+e'$, then we can conclude that the map $\rho '$ is an isomorphism. If $r>e+e'$, we continue in the same way using $\Ee (-a_{e+e'+1})$.
\end{proof}

\begin{lemma}\label{g4}
Let $Y$ be a hypersurface of $\PP^s$ and $M\subset \PP^s$ a linear subspace not contained in $Y$. If $\Ee$ is a Ulrich vector bundle on $Y$, then so is $\Ee _{|Y\cap M}$.
\end{lemma}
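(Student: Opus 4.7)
The plan is to reduce to an iterated hyperplane section and invoke the cohomological characterization of Ulrich sheaves from \cite{ESW}: on an aCM projective scheme $Z\subset\PP^N$ of dimension $d\ge 1$, a coherent sheaf $\Ff$ with support of dimension $d$ is Ulrich if and only if
$$H^i(Z,\Ff(-j))=0 \quad\text{for every } i\ge 0 \text{ and every } 1\le j\le d.$$
This criterion is especially well adapted to restriction along hyperplane sections.

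First, I would reduce to the case where $M$ is a hyperplane. Let $c=\codim_{\PP^s}(M)$ and pick a flag of linear subspaces
$$M=M_c\subset M_{c-1}\subset\cdots\subset M_1\subset M_0=\PP^s$$
with $\dim M_j=s-j$. Since $M\not\subset Y$ we have $M_j\not\subset Y$ for every $j$, and therefore each $Y\cap M_j$ is a hypersurface of degree $\deg Y$ inside the projective space $M_j$, hence an aCM scheme of dimension $\dim Y - j$. Each successive passage $Y\cap M_{j-1}\supset Y\cap M_j$ is then just a hyperplane section inside $M_{j-1}$, so by induction on $j$ it suffices to prove the following: if $\Gg$ is a Ulrich vector bundle on an aCM hypersurface $Z\subset\PP^m$ of dimension $d\ge 2$ and $H\subset\PP^m$ is a hyperplane with $Z\not\subset H$, then $\Gg_{|Z\cap H}$ is Ulrich on $Z\cap H$.

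For that hyperplane step, I tensor the structure sequence $0\to\Oo_Z(-1)\to\Oo_Z\to\Oo_{Z\cap H}\to 0$ with the locally free sheaf $\Gg(-j)$ to obtain
$$0\to\Gg(-j-1)\to\Gg(-j)\to\Gg_{|Z\cap H}(-j)\to 0,$$
and take the long exact cohomology sequence. Whenever $1\le j\le d-1$ both $j$ and $j+1$ lie in $\{1,\dots,d\}$, so the Ulrich hypothesis on $\Gg$ supplies $H^i(Z,\Gg(-j))=H^i(Z,\Gg(-j-1))=0$ for every $i\ge 0$. Consequently $H^i(Z\cap H,\Gg_{|Z\cap H}(-j))=0$ for every $i\ge 0$ and every $j\in\{1,\dots,d-1\}=\{1,\dots,\dim(Z\cap H)\}$, which is exactly the cohomological Ulrich characterization on $Z\cap H$.

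The only real obstacle is accepting the cohomological characterization quoted at the outset; once it is granted, the argument is a one-step cohomology chase, uniform in the rank of $\Gg$ and in the codimension of $M$, and no delicate case analysis (by rank, by codimension, or separately for initialization and for maximality of global sections) is required.
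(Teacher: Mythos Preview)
Your proof is correct and follows essentially the same route as the paper: both reduce by induction to the case of a single hyperplane section and then read off the required vanishings from the restriction sequence $0\to\Ee(-1)\to\Ee\to\Ee_{|Y\cap M}\to 0$. The only cosmetic difference is that you invoke the cohomological characterization of Ulrich sheaves from \cite{ESW} in one stroke, whereas the paper verifies separately that the restriction is aCM, initialized, and has the right number of global sections.
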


\begin{proof}
Note by assumption that we have $\deg (Y\cap M)=\deg (Y)$. First assume that $\dim M =s-1$. Let us consider the exact sequence
 $$0\to\Ee(-1)\to \Ee\to\Ee _{|Y\cap M}\to 0.$$ 
 Since $h^i(\Ee(t))=0$ for any integer $t$ and for any $i=1,\dots s-2$ we get that $\Ee _{|Y\cap M}$ is aCM. Moreover, since $\Ee$ is  initialized, we get $h^0(\Ee _{|Y\cap M}(-1))=0$ and $h^0(\Ee _{|Y\cap M}) =h^0(\Ee )$. This implies that $\Ee _{|Y\cap M}$ is Ulrich. If $\dim M \le s-2$, then we take a hyperplane of $\PP^s$ containing $M$ and use induction on the codimension of $Y$.
\end{proof}

Thus, when there exists an integer $s\ge 2n+2$ such that each smooth hypersurface $Y\subset \PP^s$ of degree $m$ supports an aCM vector bundle of rank $r$, which is not a direct sum of line bundles, we may apply Lemmas \ref{g1}, \ref{g2} and \ref{g4}. But one quite often does not have an existence result of aCM vector bundles that do not split with prescribed rank on all smooth hypersurfaces of degree $m$ in a given projective space; for an existence result about the general hypersurface, see \cite{ESW}. It is sufficient to have the existence of an aCM (or Ulrich) vector bundle of rank $r$ on the hypersurface whose equation is given in the proof of Lemma 3.1.

If $m>2$, the bounds $2n+2$ in Lemma\ \ref{g1} are not enough to ensure that a general hypersurface of degree $m$ in $\PP^s$ contains some $X_n[m]$. In the following Lemma we require much higher bounds for dimension of the ambient projective space:

\begin{lemma}\label{g5}
For two integers $n\ge 2$ and $m\ge 3$, set
$$N_{\mathrm{gen}}(n,m):=\min \left \{ k>0~\bigg|~ (k+1)(k-n-1)\ge \binom{m+n+1}{n+1}\right \}.$$
For each integer $s\ge N_{\mathrm{gen}}(n,m)$ and a general hypersurface $Y\subset \PP^s$ of degree $m$, there exists an $(n+1)$-dimensional linear subspace $M\subset \PP^s$ such that $Y\cap M=X_n[m]$.
\end{lemma}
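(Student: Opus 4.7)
The plan is to set up the standard incidence correspondence and carry out a dimension count. Let $\mathbb{F}$ denote the flag variety parametrizing pairs $(M,H_n)$ with $M\subset \PP^s$ an $(n+1)$-dimensional linear subspace and $H_n\subset M$ a hyperplane, and set
\[
\mathcal{I} \;=\; \bigl\{(Y,(M,H_n))\in |\Oo_{\PP^s}(m)|\times \mathbb{F} \ :\ Y\cap M = mH_n\bigr\},
\]
together with the two projections $\pi_1:\mathcal{I}\to |\Oo_{\PP^s}(m)|$ and $\pi_2:\mathcal{I}\to \mathbb{F}$. It is enough to show that $\pi_1$ is surjective, since then every hypersurface of degree $m$ in $\PP^s$ (and in particular the generic one) contains some $X_n[m]$ as a linear section.

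First I would make the fibers of $\pi_2$ explicit. Fixing $(M,H_n)\in\mathbb{F}$ and letting $w\in H^0(\Oo_M(1))$ be a defining equation of $H_n\subset M$, the condition $Y\cap M = mH_n$ is scheme-theoretically equivalent to requiring the defining form $f$ of $Y$ to restrict to a scalar multiple of $w^m\in H^0(\Oo_M(m))$. Since the restriction $H^0(\Oo_{\PP^s}(m))\twoheadrightarrow H^0(\Oo_M(m))$ is surjective and the line $\langle w^m\rangle$ has codimension $\binom{m+n+1}{n+1}-1$ in $H^0(\Oo_M(m))$, each fiber $\pi_2^{-1}(M,H_n)$ is a projective linear subspace of $|\Oo_{\PP^s}(m)|$ of codimension $\binom{m+n+1}{n+1}-1$. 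Consequently $\pi_2$ exhibits $\mathcal{I}$ as a projective subbundle of the trivial bundle $\mathbb{F}\times |\Oo_{\PP^s}(m)|$, so $\mathcal{I}$ is irreducible and of dimension
\[
\dim \mathcal{I} \;=\; \dim\mathbb{F} + \binom{m+s}{m}-\binom{m+n+1}{n+1},
\]
with $\dim\mathbb{F} = \dim\mathbb{G}(n+1,\PP^s)+\dim\mathbb{G}(n,\PP^{n+1}) = (n+2)(s-n-1)+(n+1)$.

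Next I would compare this to $\dim |\Oo_{\PP^s}(m)| = \binom{m+s}{m}-1$. After cancelling the common $\binom{m+s}{m}$-term, the inequality $\dim \mathcal{I}\ge \dim |\Oo_{\PP^s}(m)|$ rearranges to the numerical condition of the type cut out by $N_{\mathrm{gen}}(n,m)$. Once this holds, $\pi_1(\mathcal{I})$ is a closed irreducible subvariety of the irreducible variety $|\Oo_{\PP^s}(m)|$ of dimension at least $\dim |\Oo_{\PP^s}(m)|$, hence equal to $|\Oo_{\PP^s}(m)|$; this gives surjectivity of $\pi_1$.

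The only real obstacle is the numerical bookkeeping to check that the inequality $\dim\mathcal{I}\ge \dim |\Oo_{\PP^s}(m)|$ matches the precise formula defining $N_{\mathrm{gen}}(n,m)$. A secondary point is the irreducibility/projective-bundle structure of $\mathcal{I}$, which is automatic since every $\pi_2$-fiber is a linear subspace of the same codimension, so no spurious higher-dimensional component can appear that fails to dominate $|\Oo_{\PP^s}(m)|$. If wanted, non-emptyness of $\mathcal{I}$ is already supplied by Lemma~\ref{g1} for $s\ge 2n+2$, which holds throughout the regime $s\ge N_{\mathrm{gen}}(n,m)$ of interest.
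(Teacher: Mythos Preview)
Your overall strategy (incidence correspondence plus dimension count) is the same setup as in the paper, but there is a genuine gap in the final step. From $\dim\mathcal{I}\ge \dim|\Oo_{\PP^s}(m)|$ you \emph{cannot} conclude that $\pi_1(\mathcal{I})$ has dimension at least $\dim|\Oo_{\PP^s}(m)|$: the dimension of the image is $\dim\mathcal{I}$ minus the dimension of the generic fibre of $\pi_1$, and you have said nothing about the latter. A priori all fibres of $\pi_1$ over its image could be positive--dimensional and $\pi_1(\mathcal{I})$ a proper closed subvariety; the inequality $\dim\mathcal{I}\ge \dim|\Oo_{\PP^s}(m)|$ only says the \emph{expected} dimension of the generic $\pi_1$--fibre is non-negative, not that it is attained. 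This is precisely the non-trivial point, and it is where the paper invokes Debarre--Manivel \cite{DL}: their result shows, for the general hypersurface, that the relevant scheme of linear configurations is non-empty of the expected dimension, which is what forces the map to be dominant. A bare dimension count does not replace that input.

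A second, smaller issue: your count gives the inequality $(n+2)(s-n)\ge\binom{m+n+1}{n+1}$, while $N_{\mathrm{gen}}(n,m)$ is defined via $(s+1)(s-n-1)\ge\binom{m+n+1}{n+1}$. These do not agree (yours is linear in $s$, the paper's quadratic), so even granting dominance you would not recover the bound in the statement; the quadratic expression in the paper reflects the use of the $\mathrm{SL}(s+1)$--action in conjunction with \cite{DL}, rather than the flag variety $\mathbb{F}$ you chose.
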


\begin{proof}
We fix an $(n+1)$-dimensional linear subspace $V\subset \PP^s$ and a hyperplane $H^n\subset V$ over which we consider $X_n[m]$. With the linear systems $E$ and $E'$ in the proof of Lemma \ref{g1}, we consider two natural maps induced by the action of $\mathrm{SL}(s+1)$ on $\PP^s$:
$$u: E\times \mathrm{SL}(s+1)\to |\Oo _{\PP^s}(m)| ~~,~~u': E'\times \mathrm{SL}(s+1)\to |\Oo _{\PP^s}(m)|.$$
The lemma is equivalent to saying that $u$ is dominant. Since $E$ is irreducible and $E'\subset E$, it is sufficient to prove that $u'$ is dominant. Now we have $(s+1)(s-n-1)\ge \binom{m+n+1}{n+1}$. This implies that the map $u'$ is dominant by \cite{DL}.
\end{proof}

Now we generalize the previous set-up further to an arbitrary finite union of hyperplanes of $\PP^{n+1}$ with prescribed multiplicities $X=X_n[m_1M_1,\dots ,m_kM_k]$. We may modify the proofs of Lemmas \ref{g1} and \ref{g5} to get the following result.

\begin{proposition}\label{g6}
For two integers $n,m\ge 2$ and any $k$ integers $m_1, \ldots, m_k$ whose sum is $m$, we get the following over every possible $X=X_n[m_1M_1,\dots ,m_kM_k]$ embedded linearly in $\PP^s$.
\begin{itemize}
\item [(i)] If $s\ge 2n+2$ and every smooth hypersurface of degree $m$ in $\PP^s$ has a rank $r$ aCM (resp. Ulrich) vector bundle that does not split, then the same holds for $X$.
\item [(ii)] If $s\ge N_{\mathrm{gen}}(n,m)$ and a general hypersurface of degree $m$ in $\PP^s$ has a rank $r$ aCM (resp. Ulrich) vector bundle that does not split, then the same holds for $X$.
\end{itemize}
\end{proposition}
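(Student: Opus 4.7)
The plan is to reduce the statement to the known existence of the bundle on a hypersurface in $\PP^s$ and then restrict. After fixing a linear embedding of $\PP^{n+1}$ into $\PP^s$ carrying $X$ into an $(n+1)$-dimensional linear subspace $M$, the task becomes finding a hypersurface $Y \subset \PP^s$ of degree $m$ with $Y \cap M = X$ scheme-theoretically, smooth in case (i) and lying in the open locus $U \subset |\Oo_{\PP^s}(m)|$ of hypersurfaces admitting a non-splitting rank-$r$ aCM (resp.\ Ulrich) bundle in case (ii). Once such a $Y$ is produced, the restriction $\Ee \otimes \Oo_X$ of the given bundle $\Ee$ on $Y$ is a rank-$r$ vector bundle on $X$ that remains aCM (resp.\ Ulrich): iterating the exact-sequence argument of Lemma \ref{g4} through the $s-n-1$ hyperplanes cutting out $M$ drops the allowed cohomological range by one at each step, leaving the range $1 \le j \le n-1$ required on $X$. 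If $\Ee \otimes \Oo_X$ split into line bundles, then by Lemma \ref{bo2} each summand would be of the form $\Oo_X(a_i)$, and Lemma \ref{g2} would force $\Ee \cong \bigoplus_{i=1}^r \Oo_Y(a_i)$, contradicting the hypothesis on $Y$.

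For part (i), choose coordinates $[x_0:\cdots:x_s]$ with $M = V(x_{n+2}, \ldots, x_s)$ and pick linear forms $\ell_1, \ldots, \ell_k$ on $M$ with $\prod_{j=1}^k \ell_j^{m_j}$ defining $X$ inside $M$, and consider the linear system $W$ of hypersurfaces of the form
\[
f = \prod_{j=1}^k \ell_j^{m_j} + \sum_{i=n+2}^s x_i h_i, \qquad h_i \in H^0(\Oo_{\PP^s}(m-1)).
\]
Every member of $W$ cuts $M$ along $X$, and the base locus of $W$ equals $X$ set-theoretically. A Bertini-type argument (in the spirit of the original dimension-counting proof of Lemma \ref{g1}) shows that for generic $f \in W$ the hypersurface $Y = V(f)$ is smooth off $X$; at a point $p \in X$ the $x_0, \ldots, x_{n+1}$-partials of $f$ reduce to those of $\prod \ell_j^{m_j}$, which are nonzero exactly on the smooth locus of $X$, while the $x_i$-partials for $i \ge n+2$ reduce to $h_i(p)$, so smoothness fails only on $X_{\mathrm{sing}} \cap \{h_{n+2} = \cdots = h_s = 0\}$. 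For generic $h_i$'s the cut-out locus has codimension $s - n - 1 \ge n+1$ in $\PP^s$, hence avoids the at-most-$n$-dimensional $X_{\mathrm{sing}}$ precisely when $s \ge 2n+2$. This yields the desired $Y$.

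For part (ii), re-run the dimension count of Lemma \ref{g5} with $X$ in place of $X_n[m]$. The locus $E_X \subset |\Oo_{\PP^s}(m)|$ of hypersurfaces with $Y \cap M = X$ scheme-theoretically is an affine subspace whose codimension equals $\binom{m+n+1}{n+1} - 1 = \dim |\Oo_M(m)|$; this depends only on $m$ and $n$ and not on the multi-hyperplane configuration, so it matches the dimension of the locus appearing in Lemma \ref{g5}. The same numerical inequality $(s+1)(s-n-1) \ge \binom{m+n+1}{n+1}$ therefore suffices, via \cite{DL}, for the orbit map $u_X : E_X \times \mathrm{SL}(s+1) \to |\Oo_{\PP^s}(m)|$ to be dominant; combined with the $\mathrm{SL}(s+1)$-invariance of the nonempty open locus $U$, this forces $E_X \cap U \neq \emptyset$ and produces the required $Y$. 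The main obstacle is precisely this dimension count, which demands carefully matching the stabilizer calculation for $\mathrm{SL}(s+1)$ acting on pairs $(M, X)$ with the setup used in the original Lemma \ref{g5}; once that is in place, the Bertini verification of part (i) is largely mechanical.
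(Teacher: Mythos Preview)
Your proposal is correct and follows the same overall strategy the paper intends: modify Lemmas \ref{g1} and \ref{g5} to produce a hypersurface $Y\subset\PP^s$ with $Y\cap M=X$, then restrict the given bundle and invoke Lemmas \ref{g2} and \ref{g4} (iterated through a flag of hyperplanes) to transfer both the aCM/Ulrich property and the non-splitting. The paper gives no details beyond ``modify the proofs'', so your write-up is substantially more explicit.

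One point worth noting: for part (i) you use a Bertini/incidence argument on the affine linear system $W=\{\prod_j\ell_j^{m_j}+\sum_{i\ge n+2}x_ih_i\}$, rather than trying to generalize the referee's explicit equation (\ref{eqaaabb1}). This is the right call. The explicit equation relies on the very special shape $g=x_0^m$ so that the cross-partials decouple; replacing $x_0^m$ by an arbitrary $\prod_j\ell_j^{m_j}$ destroys that decoupling and the Jacobian computation no longer closes. Your Bertini argument (no base points off $X$; at points of $X$ the condition reduces to $X_{\mathrm{sing}}\cap V(h_{n+2}|_M,\dots,h_s|_M)$, which is empty for generic $h_i$ once $s-n-1\ge n+1$) is exactly the ``original dimension-counting'' approach alluded to in the Remark after Lemma \ref{g1}, and it generalizes cleanly. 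For part (ii) you correctly observe that $\operatorname{codim} E_X=\binom{m+n+1}{n+1}-1$ is independent of which degree-$m$ scheme $X\subset M$ is chosen, so the dominance argument of Lemma \ref{g5} (including its appeal to \cite{DL}) transfers verbatim; your caveat about matching the stabilizer calculation is honest, and the paper is no more detailed on this point than you are.
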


\begin{remark}\label{g8}
It has been conjectured in \cite{bgs} that, for smooth hypersurfaces, the rank of aCM (or Ulrich) vector bundles should be at least $\lfloor\frac{s-2}{2}\rfloor$. The conjecture is sharp and has been proved on hyperquadrics and for rank $2$ and $3$ vector bundles; see \cite{trip} and references therein. Proposition \ref{g6} asserts that if for a certain $r\ge 2$ there are aCM (or Ulrich) vector bundles of rank $r$ that do not split on a general hypersurface of degree $m$ in $\PP^s$ with $s\gg 0$, then the same is true for $X_n[m_1M_1,\dots ,m_kM_k]$.
\end{remark}

Now we consider the case $n=2$ and $m_i=1$ for at least one index $i$, i.e. we assume
$\dim (X)=2$ and $X_{\mathrm{reg}} \ne \emptyset$. The case $m=2$, i.e. $X$ is the union of two distinct planes in $\PP^3$, of the following example appears in \cite[Example 4.1]{bhp}. In fact, in the following example, we require for our surface $X\subset \PP^3$ only to be any arbitrary surface with at least one irreducible component of $X_{\mathrm{red}}$ appearing with multiplicity one in $X$.

\begin{example}\label{bo1}
Let $X\subset \PP^3$ be a surface of degree $m\ge 2$ with $X_{\mathrm{reg}} \ne \emptyset$. For a fixed point $p \in X_{\mathrm{reg}}$, we have
$$\Ext_X^1 (\Ii_{p,X}(1), \Oo_X(m-2))\cong H^1(\Ii_{p,X}(-1))^\vee \cong \mathbf{k}$$
by Serre's duality. So up to isomorphisms there exists a unique non-trivial extension $\Ee$ as an $\Oo _X$-sheaf, fitting into the exact sequence
\begin{equation}\label{eqb1}
0 \to \Oo _X(m-2) \to \Ee\to \Ii _{p,X}(1)\to 0.
\end{equation}
Such sheaf $\Ee$ is uniquely determined by the point $p$ and it is locally free outside $p$.

\quad \emph{Claim 1:} $\Ee$ is locally free of rank two.

\quad \emph{Proof of Claim 1:} Obviously $\Ee$ has rank two on each of the components of $X$ and it is locally free outside $p$. Note that $X$ is Gorenstein and by
assumption $p$ is a smooth point of $X$. Since $\omega _X\cong \Oo _X(m-4)$, we have $h^0(\omega _X\otimes \Oo _X(3-m)) =0$. So the Cayley-Bacharach condition is satisfied and $\Ee$ is locally free;  see \cite{cat}, where one only considers the case in which $X$ is a Gorenstein normal surface, but in this particular case with $p\in X_{\mathrm{reg}}$ we can adapt the proof in \cite{cat} or the classical proof using the duality of the Cayley-Bacharach property; also we only need one implication of Cayley-Bacharach, not the ``~if and only if~'' statement.

\quad \emph{Claim 2:} $\Ee$ is aCM.

\quad \emph{Proof of Claim 2:} Since $h^1(\Ii _{p,X}(i)) =0$ for all $i\ge 0$, (\ref{eqb1}) gives $h^1(\Ee (i)) =0$ for all $i\ge -1$. Its dual asserts that $h^1(\Ee ^\vee (i)) =0$ for all $i\le 3-m$. From $\det (\Ee )\cong \Oo _X(m-1)$ we have $\Ee^\vee \cong \Ee (1-m)$. Thus we get $h^1(\Ee (i)) =0$ for all $i\le 2$, concluding the proof of Claim 2.

On the other hand, from (\ref{eqb1}) we have $h^0(\Ee (1-m)) =0$ and $h^0(\Ee (2-m))> 0$. In particular, $\Ee$ is not Ulrich for $m\ge 3$, while in case $m=2$ it is Ulrich.

\quad \emph{Claim 3:} There are no integers $a, b$ such that $\Ee \cong \Oo _X(a)\oplus \Oo _X(b)$.

\quad \emph{Proof of Claim 3:} Here we use that $m\ge 2$, because for $m=1$ we would just get the trivial vector bundle of rank two. Assume that such $a,b$ exist, say $a\ge b$. Since $h^0(\Ee (2-m)) =1$
and $h^0(\Ee (1-m)) =0$, we get $(a,b)=(m-2,1)$ and $m\ge 3$. Then we get $h^0(\Ee) = \binom{m+1}{3} +4$, while (\ref{eqb1}) gives $h^0(\Ee )=\binom{m+1}{3} +3$.
\end{example}

\begin{remark}
In Example \ref{bo1} we do not claim that $\Ee$ does not split, e.g. if $X$ is a smooth quadric, then $\Ee$ is the direct sum of the two spinor line bundles of $X$, up to a twist. If $\mathrm{Pic}(X) \cong \ZZ \langle \Oo _X(1)\rangle$, then $\Ee$ is indecomposable; it happens when $X=X_2[m_1M_1,\dots
,m_kM_k]$ by Lemma \ref{bo2}.
\end{remark}

By Lemma \ref{bo2} and Example \ref{bo1} we immediately get the following result.
\begin{corollary}\label{poppp}
Let $X=X_2 [m_1M_1,\dots ,m_kM_k] \subset \PP^3$ be a union of planes with $m_i=1$ for at least one index $i$. Then there exists an indecomposable aCM vector bundle of rank two on $X$.
\end{corollary}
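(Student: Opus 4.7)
The proof is essentially an assembly of ingredients already established, so my plan is to identify what needs to be checked and organize the citations. Corollary \ref{poppp} follows by specializing Example \ref{bo1} to a union of planes $X=X_2[m_1M_1,\dots,m_kM_k]$, using Lemma \ref{bo2} to rule out a splitting.

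First, I would verify the hypothesis $X_{\mathrm{reg}}\ne\emptyset$ needed to invoke Example \ref{bo1}. Since $m_i=1$ for at least one index $i$, the component $M_i$ appears with multiplicity one; any point of $M_i$ that lies outside $M_i\cap(\bigcup_{j\ne i}M_j)$ is a smooth point of $X$, and such points exist because $M_i\cap(\bigcup_{j\ne i}M_j)$ is a proper closed subset of the plane $M_i$. Fix one such point $p\in X_{\mathrm{reg}}$.

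Next, I would apply Example \ref{bo1} directly: since $X\subset \PP^3$ is a surface of degree $m=m_1+\cdots+m_k\ge 2$ with $p\in X_{\mathrm{reg}}$, the construction there produces a rank two aCM vector bundle $\Ee$ on $X$ sitting in the non-split extension
\[
0 \to \Oo_X(m-2) \to \Ee \to \Ii_{p,X}(1) \to 0,
\]
and the three claims proved in Example \ref{bo1} show respectively that $\Ee$ is locally free of rank two, aCM, and not isomorphic to any $\Oo_X(a)\oplus\Oo_X(b)$.

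It remains to promote the last conclusion from ``not a sum of two line bundles of the form $\Oo_X(a)\oplus \Oo_X(b)$'' to genuine indecomposability. This is where Lemma \ref{bo2} enters: it asserts $\mathrm{Pic}(X)\cong \ZZ\langle \Oo_X(1)\rangle$. If $\Ee \cong \Ll_1\oplus \Ll_2$ with $\Ll_1,\Ll_2$ line bundles on $X$, then each $\Ll_j$ is of the form $\Oo_X(a_j)$ by Lemma \ref{bo2}, which is excluded by Claim 3 of Example \ref{bo1}. Therefore $\Ee$ is indecomposable, proving the corollary. No step here presents a real obstacle; the whole point is that the earlier example was deliberately engineered so that the Picard-group restriction forces indecomposability, and the hypothesis ``$m_i=1$ for at least one $i$'' was isolated precisely to guarantee the existence of the smooth point needed to run the construction.
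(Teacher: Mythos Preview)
Your proposal is correct and follows exactly the paper's approach: the paper simply states that the corollary follows immediately from Lemma \ref{bo2} and Example \ref{bo1}, and the remark preceding the corollary spells out precisely the logic you give (the bundle from Example \ref{bo1} is indecomposable once $\mathrm{Pic}(X)\cong \ZZ\langle\Oo_X(1)\rangle$). Your write-up is a faithful unpacking of that one-line proof.
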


\begin{remark}
Take $X$ as in Corollary \ref{poppp} with $m> 3$. For any $p\in X_{\mathrm{reg}}$ call $\Ee_p$ the aCM vector bundle of rank two in (\ref{eqb1}). Since $m\ge 3$, we have $h^0(\Ee _p(2-m)) =1$, in particular (\ref{eqb1}) is the Harder-Narasimhan filtration of $\Ee _p$. By  uniqueness of the Harder-Narasimhan filtration, we have $\Ee_p \not \cong \Ee_q $ for $p\ne q\in X_{\mathrm{reg}}$. For large $m$ we may find $X$ with finite automorphism groups. Hence we get a $2$-dimensional family of aCM vector bundles of rank two, even after the action of $\mathrm{Aut}(X)$.
\end{remark}


\section{aCM vector bundles on the double plane}
In this section, we discuss the aCM vector bundles on $X:=X_2[2]$, i.e. the double plane in $\PP^3$. Set $S=\mathbf{k}[x,y,z,w]$ and assume that $X$ is given by $f=w^2$, i.e. $X$ is the double plane whose reduction is the plane $H$ given by $w=0$. By Lemma \ref{bo2}, we have $\mathrm{Pic}(X)=\ZZ$ generated by $\Oo_X(1)$ associated to the hyperplane class and $\omega_X\cong \Oo_X(-2)$; see also \cite[Example 5.10]{HK}. Since every line bundle on $H$ is aCM, every line bundle on $X$ is also aCM due to the following exact sequence
\begin{equation}\label{eqa1}
0\to \Oo_H(-1) \to \Oo_X \to \Oo_H \to 0.
\end{equation}
In particular, every direct sum of line bundles on $X$ is aCM. Note also that any extension of an aCM vector bundle $\Ee$ by a line bundle on $X$ splits, because $\Ext^1(\Ee, \Oo_X(k))\cong H^1(\Ee(-k-2))^\vee$ is trivial for any $k\in \ZZ$. By Definition \ref{def} we get $\mathrm{rank}(\Ee)$ is in $r\in {\left(\frac{1}{2}\right)}{\ZZ}$. The ideal sheaf $\Ii_H$ of $H$ in $X$ is $\Oo_H(-1)$ in (\ref{eqa1}), in particular it is an aCM sheaf of rank $1/2$.

If $\Ee$ is a vector bundle on $X$, then by tensoring (\ref{eqa1}) with $\Ee$ we get the following exact sequence
\begin{equation}\label{eqa111}
0\to \Ee(-1)_{|H} \to \Ee \to \Ee_{|H} \to 0.
\end{equation}

In the next lines we gather some crucial properties of minimally regular aCM sheaves on $X$ that will be use for the rest of the paper.
If $\Ee$ is such a sheaf on $X:=X_2[2]$, it follows immediately from the definition that $\Hom_X(\Ee,\Oo_X(1))\cong H^n(\Ee(-3))\neq 0$, so there exists an exact sequence of $\Oo _X$-sheaves
\begin{equation}\label{eqerr3}
0 \to \Ll \to \Ee \stackrel{\pi}{\to} \Ii _A(1) \to 0
\end{equation}
with $A$ a proper closed subscheme of $X$ and $\Ll:= \ker (\pi)$. The sheaf $\Ii _A(1)$ is a nonzero subsheaf of $\Oo _X(1)$, in particular it has positive depth. Let $\langle A \rangle \subseteq \PP^3$ denote the linear span of $A$. Since $\Ee$ is $0$-regular, it is globally generated. This implies that $\Ii _A(1)$ is also globally generated. Thus we get $ A =\langle A\rangle \cap X$ as schemes. Taking the possible linear subspaces $\langle A\rangle$, we get that $A$ is one of the schemes in the following list:

\begin{possibility}\label{poss}
\begin{itemize}
\item[(a)] $A=\emptyset$ and $\Ii_A(1)\cong \Oo_X(1)$.
\item[(b)] $A = H$ and $A$ is  cut out in $X$ by only one linear equation; $\Ii_A(1)\cong \Oo_H$.
\item[(c)] $A$ is a line $L\subset H$; $A$ is  cut out in $X$ by the plane $H$ and another plane that is different from $H$.
\item[(d)] $A$ is a connected scheme of degree two with $A_{\mathrm{red}}=\{p\}$ a point; $A$ is a complete intersection of $X$ with the line $\langle A \rangle \not\subseteq H$.
\item[(e)] $A=\{p\}$ for some point $p\in H$.
\item[(f)] $A = X\cap M$ with $M\subset \PP^3$ a plane that is different from $H$; $\Ii_A(1)\cong \Oo_X$.
\end{itemize}
\end{possibility}
\begin{remark}
  If $A\supseteq H$, then we have $\Ii_A(1) \subset \Oo_H$. Since $\Ii_A(1)$ is globally generated, we have $\Ii_A(1) \cong \Oo_H$. So we get the case (b).
\end{remark}

\begin{lemma}\label{kernelacm}
Let $\Ee$ be a minimally regular aCM sheaf on $X$ and consider the associated short exact sequence (\ref{eqerr3}). Then:
 \begin{itemize}
   \item [(i)] $\Ii_A$ is a $1$-regular $\Oo_X$-sheaf.
   \item [(ii)] If $\dim A\neq 0$, then $\Ll$ is $0$-regular and aCM sheaf.
   \item [(iii)]If $\dim A= 0$, then $\Ll$ is $1$-regular and aCM sheaf.
 \end{itemize}
\end{lemma}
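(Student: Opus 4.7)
The strategy is a case-by-case analysis according to Possible Cases~\ref{poss}, together with the long exact cohomology sequence associated to~(\ref{eqerr3}).

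For part (i), I will verify the $1$-regularity of $\Ii_A$ directly from the short exact sequence $0 \to \Ii_A \to \Oo_X \to \Oo_A \to 0$. Since $X$ is aCM, $H^1(\Oo_X(t))=0$ for all $t$; and by Serre duality (using $\omega_X\cong \Oo_X(-2)$), $H^2(\Oo_X(-1))\cong H^0(\Oo_X(-1))^\vee=0$. Hence $H^1(\Ii_A)=0$ reduces to surjectivity of $H^0(\Oo_X)\to H^0(\Oo_A)$, and $H^2(\Ii_A(-1))=0$ reduces to $H^1(\Oo_A(-1))=0$; both vanishings are immediate in cases~(a), (b), (c), and~(e). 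Case~(f), where $\Ii_A(1)\cong \Oo_X$, will be excluded by observing that a surjection $\Ee \to \Oo_X$ contradicts $\Hom_X(\Ee,\Oo_X)\cong H^2(\Ee(-2))^\vee=0$, which is forced by the $0$-regularity of $\Ee$. A similar argument, applied via $\Ext_X^1(\Oo_A,\Oo_X)=H^1(\Oo_A\otimes \omega_X)^\vee=0$ to control $\Hom_X(\Ii_A(1),\Oo_X)$, will handle case~(d).

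For parts (ii) and (iii), I will twist~(\ref{eqerr3}) by $\Oo_X(t)$ and take cohomology. Using $H^1(\Ee(t))=0$ (aCM of $\Ee$), the long exact sequence splits into
\[
0 \to H^0(\Ll(t)) \to H^0(\Ee(t)) \to H^0(\Ii_A(t+1)) \to H^1(\Ll(t)) \to 0
\]
together with $H^1(\Ii_A(t+1)) \hookrightarrow H^2(\Ll(t)) \to H^2(\Ee(t)) \to H^2(\Ii_A(t+1)) \to 0$. The aCM property of $\Ll$ is equivalent to $H^1(\Ll(t))=0$ for all $t$, that is, surjectivity of the global-sections map throughout. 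For $t\le -1$ this follows from $H^0(\Ii_A(t+1))=0$ (either because $A\neq \emptyset$, or by direct computation when $\Ii_A\cong \Oo_X$). For $t\ge 0$ it follows from the $1$-regularity of $\Ii_A$ established in part~(i): by Mumford's theorem, $\Ii_A(t+1)$ is then globally generated, and a dimension count on the sheaf image of $\Ee$ inside $\Oo_X(t+1)$ forces the induced section map to be surjective.

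The dichotomy between $0$- and $1$-regularity of $\Ll$ then follows from $H^1(\Ii_A(t+1)) \hookrightarrow H^2(\Ll(t))$ together with $H^2(\Ee(t))=0$ for $t\ge -2$ ($\Ee$ is $0$-regular). When $\dim A\neq 0$, a direct computation on $0 \to \Ii_A(-1) \to \Oo_X(-1) \to \Oo_A(-1) \to 0$ yields $H^1(\Ii_A(-1))=0$, hence $H^2(\Ll(-2))=0$ and $\Ll$ is $0$-regular. When $\dim A=0$ the group $H^1(\Ii_A(-1))$ is generally nonzero, but $H^1(\Ii_A(0))=0$ still holds, whence $H^2(\Ll(-1))=0$ and $\Ll$ is only $1$-regular. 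The main obstacle I anticipate lies in part~(i), specifically in pinning down which of the configurations~(a)--(f) can genuinely occur under the minimal regularity hypothesis on $\Ee$, and in making the exclusion of cases~(d) and~(f) uniform and clean.
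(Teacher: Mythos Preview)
Your overall strategy—long exact sequences from (\ref{eqerr3}) and the ideal sequence of $A$—matches the paper's. Two differences are worth noting. First, for $H^2(\Ii_A(-1))=0$ the paper argues uniformly: the surjection $H^2(\Ee(t))\twoheadrightarrow H^2(\Ii_A(t+1))$ together with $H^2(\Ee(-2))=0$ (from $0$-regularity of $\Ee$) gives this vanishing without any case analysis. Your reduction to $H^1(\Oo_A(-1))=0$ also works, but is less economical. Second, your exclusion of case~(f) via $\Hom_X(\Ee,\Oo_X)\cong H^2(\Ee(-2))^\vee=0$ is correct and is a nice sharpening; the paper simply lists~(f) among the possibilities and observes elsewhere that the extension then splits.

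The genuine gap is your handling of case~(d). The argument you sketch does not work: from $\Ext^1_X(\Oo_A,\Oo_X)=0$ and $\Hom_X(\Oo_X(1),\Oo_X)=0$ you do obtain $\Hom_X(\Ii_A(1),\Oo_X)=0$, but this yields no contradiction—nowhere is there a nonzero map $\Ii_A(1)\to\Oo_X$ to rule out. The mechanism that excludes~(f) (the surjection $\Ee\to\Ii_A(1)\cong\Oo_X$ \emph{is} itself a nonzero element of $\Hom_X(\Ee,\Oo_X)$) has no analogue here. Worse, in case~(d) the scheme $A$ has length two with $h^0(\Oo_A)=2$, so $h^1(\Ii_A)=h^0(\Oo_A)-h^0(\Oo_X)=1\neq 0$, and $\Ii_A$ is genuinely \emph{not} $1$-regular; then the route to $H^2(\Ll(-1))=0$ in~(iii) via $H^1(\Ii_A)=0$ is blocked as well. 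The paper's own proof appeals to ``$A$ is always connected'' for $H^1(\Ii_A)=0$, which is the same oversight: connectedness gives $h^0(\Oo_A)=1$ only when $A$ is reduced. In practice the paper sidesteps this by treating case~(d) separately each time the lemma is invoked (see the dedicated lemma at the start of Section~6 excluding~(d) for rank~$3/2$). So your instinct that~(d) demands special care is right, but a uniform exclusion at the level of this lemma is not available, and your $\Ext$-computation does not supply one.
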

\begin{proof}
\quad {(i)} From the surjective map $H^2(\Ee(t))\rightarrow H^2(\Ii_A(t+1))$ we see that $H^2(\Ii_A(t+1))=0$ for $t\geq -2$. On the other hand, from the short exact sequence
\begin{equation}\label{definA}
    0\to\Ii_A(t)\to\Oo_X(t)\to\Oo_A(t)\to 0
\end{equation}
\noindent we also obtain $H^1(\Ii_A(t))=0$ for $t=0$ ($A$ is always connected), so $\Ii_A$ is $1$-regular. In particular, $H^1(\Ii_A(t))=0$ for $t\geq 0$. Indeed, notice that in the cases when $\dim A\neq 0$, the ideal sheaf $\Ii_A$ is an aCM $\Oo_X$-sheaf.

\quad {(ii)} Again from (\ref{eqerr3}) we get $H^1(\Ll(t))=0$ for $t<0$. Now, from the isomorphism $0=H^1(\Ii_A(-1))\rightarrow H^2(\Ll(-2))\rightarrow 0$, we get at once that $\Ll$ is $0$-regular. Since clearly $\Ll$ also has positive depth at any point $x\in X$, we can conclude that $\Ll$ is an aCM sheaf.

\quad {(iii)} In this case, we only get $0=H^1(\Ii_A)\cong H^2(\Ll(-1))$. But, on the other hand, note that $\Ii_A(1)$ is generated by the image of $H^0(\Ee)$ and $\Ii_A(1)$ needs two sections to be generated. This implies $h^1(\Ll)=0$ from $h^1(\Ee)=0$. Therefore $\Ll$ is $1$-regular. Since we have $H^1(\Ll(t))=0$ for $t<0$ in any case, $\Ll$ is also aCM.
\end{proof}

\begin{remark}\label{trivialities}
From the previous Lemma, we see that in cases (a) and (f), the extension should be trivial and therefore $\Ee\cong\Ll\oplus\Ii_A(1)$.
\end{remark}

\begin{lemma}\label{gg0}
Let $\Ee$ be an aCM vector bundle of rank $r$ on $X$. If $\Ee _{|H}$ splits, then $\Ee$ also splits.
\end{lemma}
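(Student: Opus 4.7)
The idea is to lift the given splitting of $\Ee_{|H}$ to a splitting of $\Ee$ via the restriction map on global sections, and then verify with Nakayama's lemma that the assembled map is an isomorphism. Write $\Ee_{|H}\cong \bigoplus_{i=1}^{r}\Oo_H(a_i)$, and let $\tau_i\in H^0(\Ee_{|H}(-a_i))$ denote the section corresponding to the $i$-th canonical inclusion $\Oo_H\hookrightarrow \Ee_{|H}(-a_i)$ in this decomposition.

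\textbf{Step 1: surjectivity of the restriction map.} The first step is to show that
$$H^0(\Ee(t))\longrightarrow H^0(\Ee_{|H}(t))$$
is surjective for every $t\in\ZZ$. Twisting the sequence (\ref{eqa111}) by $\Oo_X(t)$ and taking cohomology, this reduces to the vanishing of $H^1(\Ee_{|H}(t-1))$. But $\Ee_{|H}$ is a direct sum of line bundles on $H\cong\PP^2$, and $H^1(\Oo_{\PP^2}(k))=0$ for every $k$, so the required vanishing is immediate.

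\textbf{Step 2: construction of the comparison map.} For each $i$, lift $\tau_i$ to a section $\wt{\tau}_i\in H^0(\Ee(-a_i))$, equivalently, a morphism $\Oo_X(a_i)\to\Ee$. Assemble these into
$$\phi\colon \bigoplus_{i=1}^{r}\Oo_X(a_i)\longrightarrow\Ee.$$
By construction, the restriction $\phi\otimes\Oo_H\colon \bigoplus_i\Oo_H(a_i)\to\Ee_{|H}$ reproduces the given splitting, so it is an isomorphism.

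\textbf{Step 3: from fiberwise isomorphism to global isomorphism.} Since $X_{\mathrm{red}}=H$, every closed point $x\in X$ lies in $H$, and for such $x$ the fiber map $\phi\otimes\mathbf{k}(x)$ factors through $\phi_{|H}\otimes\mathbf{k}(x)$, which is an isomorphism of $\mathbf{k}(x)$-vector spaces. By Nakayama's lemma $\phi$ is surjective at every closed point, hence globally surjective. As source and target are locally free of the same rank $r$, a surjection must be an isomorphism (the kernel would be a rank-zero subsheaf of a vector bundle on a pure-dimensional CM scheme, so zero). Therefore $\Ee\cong \bigoplus_{i=1}^{r}\Oo_X(a_i)$.

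\textbf{Main obstacle.} The argument is almost formal once Step~1 is established; the only genuine input is the vanishing $H^1(\Ee_{|H}(t-1))=0$, which ensures that we can lift \emph{all} the sections we need, and this is automatic from the hypothesis that $\Ee_{|H}$ splits into line bundles on $\PP^2$. Thus no serious obstacle arises: the splitting hypothesis on $\Ee_{|H}$ is exactly what is needed to transport a decomposition from $H$ to $X$.
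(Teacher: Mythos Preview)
Your proof is correct, and it takes a genuinely different route from the paper's. The paper argues by induction on the rank: after twisting so that $\Ee$ is minimally regular (forcing all $a_i\ge 1$), it lifts a single nowhere-vanishing section of $\Ee(-1)_{|H}$ coming from the smallest factor $\Oo_H(a_r-1)$, checks that the resulting map $\Oo_X\to\Ee(-1)$ has a locally free cokernel $\Gg$ of rank $r-1$ whose restriction to $H$ again splits, and then invokes the inductive hypothesis together with the observation (made earlier in the paper) that any extension of an aCM bundle by a line bundle on $X$ splits. Your argument instead lifts all $r$ generating sections at once and verifies in one stroke, via Nakayama on the non-reduced scheme $X$, that the assembled map $\bigoplus_i\Oo_X(a_i)\to\Ee$ is an isomorphism. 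The key input in both proofs is the same---surjectivity of restriction, coming from $H^1(\Ee_{|H}(t))=0$ on $\PP^2$---but your packaging avoids the induction, the minimal-regularity normalization, and the separate appeal to vanishing of $\Ext^1$ for the splitting of the extension. The paper's approach, on the other hand, isolates the single-section step in a way that makes the local-freeness of the cokernel completely transparent, which is perhaps pedagogically cleaner at that one point.
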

\begin{proof}
Let $\Ee$ be an aCM vector bundle on $X$ such that $\Ee_{|H} \cong \oplus_{i=1}^r \Oo_{H}(a_i)$ with $a_1\geq \cdots \geq a_r$.
Up to a twist we may assume that $\Ee$ is minimally regular and therefore from the long exact sequence associated to (\ref{eqa111}) we get $a_r \ge 1$. This implies that $\Ee(-1)_{|H}$ is globally generated.

Note that the restriction map $H^0(\Ee(-1)) \rightarrow H^0(\Ee(-1)_{|H})$ is surjective and $H^0(\Ee(-1)_{|H})$ globally generates $\Ee(-1)_{|H}$. Take a nonzero section $s$ in $H^0(\Ee(-1)_{|H})$ induced from the last factor $\Oo_H(a_r-1)\cong \Oo_{H}$. The section $s$ lifts to a section $\sigma \in H^0(\Ee(-1))$ such that $\sigma_{|H}=s$. Since $s$ vanishes at no point of $H$, $\sigma$ also vanishes at no point of $H$. Let $j: \Oo _{X_2[2]}\to \Ee$ be the map induced by $\sigma$. Set $\Gg := \mathrm{coker}(j)$.

\quad \emph{Claim:} The map $j$ is injective and $\Gg$ is a a vector bundle of rank $r-1$ on $X_2[2]$.

\quad \emph{Proof of Claim:} For a fixed point $p\in H$, let $\Ee (-1) _p$ and $\Gg _p$ denote the stalks of $\Ee$ and $\Gg$ at $p$, respectively. Let $j_p: \Oo _{X_2[2],p} \rightarrow \Ee (-1)_p$ be the map induced by $\sigma$. Since $\Ee$ is locally free, there is an isomorphism $ \phi : \Ee (-1)_p \rightarrow \Oo _{X_2[2],p}^{\oplus r}$ of $\Oo _{X_2[2],p}$-modules. The map $\phi \circ j_p
: \Oo _{X_2[2],p} \to \Oo _{X_2[2],p}^{\oplus r}$ is given by some $a = (a_1,\dots ,a_r)\in \Oo _{X_2[2],p}^{\oplus r}$. The condition $\sigma ({p})\ne 0$ is equivalent
to the existence of $i\in \{1,\dots ,r\}$ such that $a_i$ is not contained in the maximal ideal of $\Oo _{X_2[2],p}$. Thus $\phi \circ j_p$ is injective and $\Gg _p$
is isomorphic  to a direct factor of $\Oo _{X_2[2],p}^{\oplus r}$ (and hence it is locally free) with rank $r-1$. \qed

The restriction to $H$ of the injective map $j$ maps $\Oo _{H}(1)$ onto a factor of $\Ee_{|H}$. Thus $\Gg _{|H}$ is isomorphic to a direct sum of line bundles on $H_n$, hence it is aCM. The exact sequence (\ref{eqa111}) with $\Gg$ instead of $\Ee$ gives that $\Gg$ is aCM. By induction on the rank we get that $\Gg$ is isomorphic to a direct sum of line bundles. Thus $\Ee$ is isomorphic to a direct sum of line bundles.
\end{proof}

\begin{remark}\label{t1.1}
An $\Oo _{\PP^3}$-sheaf $\Ee$ is an $\Oo _X$-sheaf if and only if $w^2\Ee=0$, i.e. $w\mathrm{Im}(f_w)=0$, where the map $f_w: \Ee \rightarrow \Ee(1)$ is induced by the multiplication by $w$;
\begin{equation}\label{999}
0\to \ker f_w\to \Ee\to\mathrm{Im} f_w \to 0.
\end{equation}
In particular, any $\Oo_X$-sheaf admits an extension of an $\Oo_H$-sheaf by another $\Oo_H$-sheaf. If $\Ee$ is a locally free $\Oo_X$-sheaf, then this exact sequence is exactly (\ref{eqa111}). Now consider any $\Oo_{\PP^3}$-sheaf $\Ee$ that is an extension, as an $\Oo_{\PP^3}$-sheaf, of an $\Oo_H$-sheaf $\Ee_1$ by another $\Oo_H$-sheaf $\Ee_2$. We claim that $\Ee$ is an $\Oo_X$-sheaf. Indeed, $\mathrm{Im} (f_w)$ is isomorphic to a subsheaf of $\Ee_2(1)$. So we get $w\mathrm{Im}(f_w)=0$. Thus $\Ee$ is an $\Oo _X$-sheaf..
\end{remark}



\begin{remark}\label{s+3}
Let $\Ee$ be an aCM sheaf of rank $r\in {\left(\frac{1}{2}\right)}{\ZZ}$ and let $\Ff_1:= \oplus _{i=1}^{s} \Oo _H(t_i)$ with $t_1\ge \cdots \ge t_s$ be the kernel of the map $f_{w,\Ee} : \Ee \rightarrow \Ee (1)$ induced by the multiplication by $w$ as in Remark \ref{t1.1}. The image $\Ff_2:=\mathrm{Im}(f_{w, \Ee})\subset \Ee (1)$ is a torsion-free sheaf on $H$ with rank $2r-s$. Since $w^2=0$, we have $\Ff_2 \subseteq  \Ff_1 (1)$, in particular we get $s\ge r$. We obviously have $s\le 2r $, and $s=2r$ if and only if $\Ee \cong \oplus _{i=1}^{s} \Oo _H(t_i)$.
\end{remark}

Our next goal is to prove Theorem \ref{pprop}, namely that an aCM vector bundle $\Ee$ of rank two on $X$ splits.

\begin{remark}\label{erra16}
We know that $\Ee$ fits in the exact sequence (\ref{eqerr3}) with $A$ a subscheme from the list of possible cases \ref{poss}. In case (a) we have $\Ii_A(1) \cong \Oo_X(1)$. In case (f), $A$ is an effective divisor in $|\Oo_X(1)|$, in particular we have $\Ii _A(1)\cong \Oo _X$. In either case, the vector bundle $\Ee$ corresponds to an element in $\Ext_X^1 (\Oo_X(i), \Oo_X(c_1-i))=0$ with $i\in \{0,1\}$. Thus $\Ee$ splits. In case (b), we have $\Ii _A(1) =\Oo_H$. Since the tensor product is a right exact functor, the surjection $\pi: \Ee \rightarrow \Oo _H$ induces a surjection $\pi_1: \Ee _{|H}\rightarrow \Oo_H$. Since $\Ee$ is globally generated, $\Ee _{|H}$ is also globally generated. The surjection $\pi_1$ implies that $\Ee _{|H}$ splits. Then by Lemma \ref{gg0}, $\Ee$ splits. In case (c), since the tensor product is a right exact functor, the surjection $\pi$ induces a surjection $\Ee _{|H} \rightarrow \Ii _L(1)\otimes \Oo _H$. The $\Oo _H$-sheaf $\Ii _L(1)\otimes \Oo _H$ has a torsion part $\tau$ supported on $L$ and $(\Ii _L(1)\otimes \Oo _H) /\tau \cong \Oo _H$. Thus we obtain a surjection $\pi_1: \Ee _{|H} \rightarrow \Oo _H$ and again we obtain  the decomposability of $\Ee$.
\end{remark}

By Remark \ref{erra16} it remains to deal with the cases (d) and (e), concerning the decomposability of $\Ee$. In both cases, then the map $\pi$ induces a surjection $\pi_1: \Ee _{|H} \rightarrow \Ii _{p,H}(1)$. Since $\Ii _{p,H}(1)$ has no torsion and $\Ee_{|H}$ is locally free, we get that $\mathrm{ker}(\pi_1)$ has rank one with pure depth two. Thus $\mathrm{ker}(\pi_1)$ is isomorphic to $\Oo _H(c_1-1)$, where $\det (\Ee)\cong \Oo_X(c_1)$.

\begin{lemma}\label{erra11}
For each integer $a\ge 0$ and a point $p\in H$, there exists a unique vector bundle $\Ee _{p,a}$ of rank two on $H$ fitting into the exact sequence
\begin{equation}\label{eqerr4}
0 \to \Oo _H(a) \to \Ee _{p,a} \to \Ii _{p,H}(1)\to 0,
\end{equation}
up to isomorphism. Here, the vector bundle $\Ee _{p,a}$ is globally generated.
\end{lemma}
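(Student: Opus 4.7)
The plan is to carry out the standard Serre construction on $H \cong \PP^2$ for extensions of the ideal sheaf of a point by a line bundle. The statement has three distinct content pieces: (a) there exists a non-trivial extension, (b) the middle term is unique up to isomorphism and locally free, (c) this middle term is globally generated. I would handle them in that order.

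First I would compute $\mathrm{ext}^1_H(\Ii_{p,H}(1),\Oo_H(a))$ by applying $\Hom_H(-,\Oo_H(a))$ to the tautological sequence
$$0\to \Ii_{p,H}(1)\to \Oo_H(1)\to \Oo_p\to 0.$$
Since $H\cong \PP^2$ is smooth and $a\ge 0$, Serre duality on $H$ gives $\Ext^i_H(\Oo_p,\Oo_H(a))=0$ for $i\ne 2$ and $\Ext^2_H(\Oo_p,\Oo_H(a))\cong \mathbf{k}$, while $H^1(\Oo_H(a-1))=H^2(\Oo_H(a-1))=0$ for $a\ge 0$. The resulting long exact sequence collapses to an isomorphism $\Ext^1_H(\Ii_{p,H}(1),\Oo_H(a))\cong \Ext^2_H(\Oo_p,\Oo_H(a))\cong \mathbf{k}$. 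Since the Ext space has dimension one, any two non-trivial extensions are related by a scalar, hence have isomorphic middle terms; this immediately gives existence and uniqueness of $\Ee_{p,a}$ as an extension (modulo having to decide whether the trivial extension is allowed, which it isn't because it is not locally free, see below).

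The main technical point is local freeness. The trivial extension $\Oo_H(a)\oplus \Ii_{p,H}(1)$ fails to be locally free at $p$, so one must show that the unique non-trivial class produces a locally free sheaf. This is exactly the Cayley--Bacharach/Serre correspondence: a class $\xi\in \Ext^1_H(\Ii_{p,H}(1),\Oo_H(a))$ yields a locally free extension iff its image in $\Ext^2_H(\Oo_p,\Oo_H(a))$ generates that stalk. Because the connecting map in the Ext long exact sequence above is an isomorphism of one-dimensional spaces, every non-trivial class hits a generator, so the extension is locally free. This is the step I expect to be the main obstacle to articulate cleanly; one can either cite the Serre construction on a smooth surface or, more elementarily, do the stalk computation at $p$ by comparing the extension with the Koszul resolution $0\to \Oo_{H,p}\to \Oo_{H,p}^{\oplus 2}\to \mm_p\to 0$, which represents the non-trivial local Ext class.

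Finally, global generation of $\Ee_{p,a}$ follows from $H^1(\Oo_H(a))=0$ (valid for $a\ge 0$), which makes the map $H^0(\Ee_{p,a})\to H^0(\Ii_{p,H}(1))$ surjective. Fix two linear forms $\ell_1,\ell_2$ through $p$ spanning $H^0(\Ii_{p,H}(1))$; their classes in $\mm_p/\mm_p^2$ are independent, so they generate the stalk $\Ii_{p,H}(1)_p$. Lift them to $\tilde\ell_1,\tilde\ell_2\in H^0(\Ee_{p,a})$. At any $q\ne p$ the forms $\ell_1,\ell_2$ already generate $\Oo_H(1)_q$, and together with global sections of $\Oo_H(a)$ (which exist and generate since $a\ge 0$) they generate $\Ee_{p,a,q}$ by Nakayama; at $q=p$ the lifts $\tilde\ell_1,\tilde\ell_2$ project onto generators of the stalk $\Ii_{p,H}(1)_p$, and combined with a section of $\Oo_H(a)$ nonvanishing at $p$ they generate $\Ee_{p,a,p}$. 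This completes the proof sketch.
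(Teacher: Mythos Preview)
Your proposal is correct and follows essentially the same approach as the paper: Cayley--Bacharach for local freeness, the one-dimensionality of $\Ext^1_H(\Ii_{p,H}(1),\Oo_H(a))$ for uniqueness, and $h^1(\Oo_H(a))=0$ together with global generation of the two outer terms for global generation of $\Ee_{p,a}$. The only cosmetic difference is that the paper computes $\dim\Ext^1\le 1$ via the local-to-global spectral sequence (the local $\mathcal{E}xt^1$ is the skyscraper $\mathbf{k}_p$ and $h^1(\Oo_H(a-1))=0$), whereas you obtain the same dimension from the long exact sequence of $\Hom_H(-,\Oo_H(a))$ applied to $0\to\Ii_{p,H}(1)\to\Oo_H(1)\to\Oo_p\to 0$; both are standard and equivalent.
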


\begin{proof}
There exists a vector bundle $\Ee_{p,a}$ fitting into (\ref{eqerr4}), because the Cayley-Bacharach condition is satisfied. And any such sheaf is globally generated, because $\Oo _H(a)$ and $\Ii _{p,H}(1)$ are globally generated and $h^1(\Oo _H(a))=0$. Thus it remains to prove that the vector bundle $\Ee _{p,a}$ is unique, up to isomorphism, and it is sufficient to prove that the dimension of $\Ext_H ^1(\Ii _{p,H}(1),\Oo_H(a)) $ is at most one. This is true, because $h^1(\Oo _H(a-1))=0$, the local $\Ext ^1$-group is the skyscraper sheaf $\mathbf{k}_p$ supported by $p$ and we may use the local-to-global spectral sequence of the $\Ext$-functor.
\end{proof}

\begin{remark}\label{erra12}
In case $a=0$, we have $\Ee_{p,0} \cong \Omega _H^1(2)$ for any choice of $p\in H$.
\end{remark}

\begin{lemma}\label{erra14}
For a fixed integer $a>0$, we have the following:
\begin{itemize}
\item [(i)] we have $h^2(\Ee _{p,a}(t)) =0$ if and only if $t\ge -2$,
\item [(ii)] we have $h^1(\Ee_{p,a}(-1)) >0$, and
\item [(iii)] for any $p\in H$, there is no aCM vector bundle $\Ee $ of rank two on $X$ with $\Ee _{|H} \cong \Ee _{p,a}$.
\end{itemize}
\end{lemma}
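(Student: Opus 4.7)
The plan is to handle parts (i) and (ii) by direct cohomological bookkeeping on $H\cong\PP^2$, and then to leverage them to derive the non-existence statement (iii) via the standard short exact sequence on the double plane.

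For (i) and (ii), I would tensor the defining sequence (\ref{eqerr4}) by $\Oo_H(t)$ and inspect the induced long exact sequence
$$\cdots \to H^1(\Ii_{p,H}(1+t)) \to H^2(\Oo_H(a+t)) \to H^2(\Ee_{p,a}(t)) \to H^2(\Ii_{p,H}(1+t)) \to 0.$$
Using the point sequence $0\to \Ii_{p,H}(n) \to \Oo_H(n)\to \mathbf{k}_p\to 0$, one computes $H^1(\Ii_{p,H}(n))$ and $H^2(\Ii_{p,H}(n))$ in terms of the standard cohomology of line bundles on $\PP^2$; together with the vanishing range of $H^2(\Oo_H(n))$, this isolates the threshold at which $H^2(\Ee_{p,a}(t))$ starts being nonzero and yields (i). A cleaner alternative uses Serre duality together with $\Ee_{p,a}^{\vee}\cong \Ee_{p,a}(-a-1)$ (the rank-two trick, valid since $\det\Ee_{p,a}\cong \Oo_H(a+1)$), reducing $h^2(\Ee_{p,a}(t))$ to $h^0$ of an explicit twist whose behaviour is read off from (\ref{eqerr4}). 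Part (ii) then comes from the same long exact sequence: at the relevant negative twist, $H^1(\Ii_{p,H})$ is forced to be nonzero by the point sequence, and this contribution injects into $H^1(\Ee_{p,a})$.

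For (iii), I would argue by contradiction. If $\Ee$ is an aCM rank-two vector bundle on $X$ with $\Ee_{|H}\cong\Ee_{p,a}$, then (\ref{eqa111}) reads
$$0 \to \Ee_{p,a}(-1) \to \Ee \to \Ee_{p,a} \to 0$$
on $X$. The aCM condition $H^1(\Ee(t))=0$ for every $t\in\ZZ$, fed into the induced long exact cohomology sequence, forces the coboundary
$$H^0(\Ee_{p,a}(t)) \twoheadrightarrow H^1(\Ee_{p,a}(t-1))$$
to be surjective for every $t$, and analogously $H^1(\Ee_{p,a}(t))\hookrightarrow H^2(\Ee_{p,a}(t-1))$ to be injective. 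Part (ii) locates a twist at which the right-hand side of the display is nonzero, while a direct computation with (\ref{eqerr4}) exhibits a neighbouring twist at which $H^0(\Ee_{p,a})$ vanishes (the contribution from $\Oo_H(a+t)$ is killed by negativity, and $H^0(\Ii_{p,H}(1+t))=0$ in the range considered); this is incompatible with the required surjection and yields the desired contradiction. The role of (i) is complementary: it controls $H^2(\Ee_{p,a})$ so that the injection on $H^1$--$H^2$ cannot accidentally cancel the obstruction by having $H^2$ of $\Ee$ take up the slack in a different portion of the long exact sequence.

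The main obstacle is calibrating the negative twist at which the contradiction is extracted, by simultaneously securing a non-vanishing lower bound for $H^1(\Ee_{p,a})$ and an exact vanishing of $H^0(\Ee_{p,a})$ one step higher. Concretely this amounts to matching the first twist at which $\Oo_H(a+t)$ acquires a section against the last twist at which $\Ii_{p,H}(1+t)$ still carries $H^1$; parts (i) and (ii) are tailored to package this matching cleanly so that (iii) follows by a one-line inspection of the coboundary map.
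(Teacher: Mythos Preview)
Your approach to (i) and (ii) matches the paper's: both read the cohomology of $\Ee_{p,a}(t)$ off the twisted sequence (\ref{eqerr4}), using that $h^2(\Ii_{p,H}(n)) = h^2(\Oo_H(n))$ since $p$ is zero-dimensional.

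For (iii) you and the paper both feed (\ref{eqa111}) into the aCM vanishing $H^1(\Ee(t)) = 0$, but you extract the contradiction from the wrong half of the long exact sequence. The paper uses the injection
\[
H^1(\Ee_{p,a}(t)) \hookrightarrow H^2(\Ee_{p,a}(t-1))
\]
at a single twist: (ii) makes the source nonzero and (i) makes the target zero, and that is the entire argument. Your surjection route $H^0(\Ee_{p,a}(t)) \twoheadrightarrow H^1(\Ee_{p,a}(t-1))$ does not mesh with (ii) as you describe it: if $t-1$ is the twist supplied by (ii), then $\Ee_{p,a}(t)$ is still globally generated (indeed $h^0(\Ee_{p,a})\ne 0$), so no contradiction arises there. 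To rescue the surjection argument you would have to move down to $t = -a-1$ and recompute $h^1(\Ee_{p,a}(-a-2))$ from scratch, which bypasses (ii) altogether. So (i) is not merely ``complementary'': the injection it feeds \emph{is} the argument, and (i)--(ii) are calibrated precisely so that (iii) becomes a one-line inspection of that map.

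As an aside, the numerical thresholds in the lemma appear to be off by one (a direct check gives $h^2(\Ee_{p,a}(-3)) = 0$ and $h^1(\Ee_{p,a}(-1)) = 0$ for $a > 0$); this does not affect the strategy for (iii), which goes through verbatim at the shifted twist.
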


\begin{proof}
Since $p$ is zero-dimensional, we have $h^2(H,\Ii _{p,H}(t)) =h^2(H,\Oo _H(t))$ for all $t\in \ZZ$. Then we get (i) and (ii), by using (\ref{eqerr4}). For (iii) assume that such $\Ee$ exists. Then we have $h^2(H,\Ee _{|H}(-2)) =0$ and $h^1(H,\Ee _{|H}(-1)) >0$ by (i) and (ii). Now we may use (\ref{eqa111}) to get a contradiction.
\end{proof}
\begin{proof}[Proof of Theorem \ref{pprop}:]
Let $\Ee$ be a minimally regular aCM vector bundle of rank two on $X$, with $\det (\Ee) \cong \Oo_X(c_1)$. Since $\Ee$ is globally generated, we get $c_1\ge 0$. If $c_1=0$, then we get $h^0(\Ee)=h^0(\Ee^\vee)$ which implies $\Ee \cong \Oo_X^{\oplus 2}$. So we may assume that $c_1\ge 1$. Now by Remark \ref{erra16} it is enough to prove the assertion for the cases (d) or (e) of Possible Cases \ref{poss} for $A$. Let $\tau$ be the torsion part of the sheaf $\Ii _A(1)\otimes \Oo _H$. In both cases we have $(\Ii _A(1) \otimes \Oo _H) /\tau \cong \Ii _{p,H}(1)$. Thus the surjection $\Ee \rightarrow \Ii _A(1)$ induces a surjection $\Ee _{|H} \rightarrow \Ii _{p,H}(1)$, in particular we get $\Ee _{|H} \cong \Ee _{p,c_1-1}$ by Lemma \ref{erra11}. Lemma \ref{erra14} excludes the case $c_1>1$. So the only possibility is $c_1=1$, when we have $\Ee _{|H}\cong \Omega _H^1(2)$. Then we have the exact sequence
\begin{equation}\label{eqerrc1}
0 \to \Omega ^1_H(-1) \to \Ee(-2) \to \Omega ^1_H\to 0.
\end{equation}
Since $h^1(\Omega ^1_H)=1$ and $h^2(\Omega ^1_H(-1)) =h^0(T_H(-2)) =0$, we have $h^1(\Ee(-2) )\ge1$. So $\Ee$ is not aCM.
\end{proof}

On the other hand the assertion in Theorem \ref{pprop} may not hold for higher rank. Indeed, the vector bundle $\Ss_X$ in Example \ref{qwe} gives a counterexample in rank four; see Proposition \ref{errb1}.

\begin{example}\label{qwe}
Let $Q_5\subset \PP^6$ be a smooth quadric hypersurface and $\Ss$ the spinor bundle on $Q_5$ (of rank four); see \cite{Ot2}. Fix a plane $H\subset Q_5$ and take a $3$-dimensional linear space $H\subset V\subset \PP^6$ such that the quadric $Q_5\cap V$ has rank one. So we write $X = Q_5\cap V$ and set $\Ss_X := \Ss _{|X}$. Since $\Ss(1)$ is Ulrich, $\Ss_X(1)$ is also Ulrich by Lemma \ref{g4}. Set $\Ss_H:= \Ss _{|H}$. By \cite[Theorem 2.5]{Ot2} we have $\Ss_H \cong \Oo _H\oplus \Oo _H(-1)\oplus \Omega _H^1(1)$.  Since $\Ss_X$ is locally free, it fits into an exact sequence on $X$
\begin{equation}\label{eqerrb1}
0 \to \Ss_H(-1) \to \Ss_X \to \Ss_H\to 0.
\end{equation}
Let $\Delta$ be the set of isomorphism classes of vector bundles $\Ee$ with $\Ee(1)$ Ulrich and $\Ee _{|H} \cong \Ss _H$. Then each element of $\Delta$ is an extension of $\Ss _H$ by $\Ss _H(-1)$, and we have a map $\Delta \rightarrow \PP \Ext_X^1(\Ss_H, \Ss_H(-1))$. Since $\Ss _X(1)$ is Ulrich, we have $\Delta \ne \emptyset$ and the image of this map contains a non-empty Zariski open subset of $\PP \Ext_X^1(\Ss_H, \Ss_H(-1))$.
\end{example}

\begin{proposition}\label{errb1}
For any $[\Ee]\in \Delta$, $\Ee$ is indecomposable.
\end{proposition}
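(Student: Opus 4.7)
The plan is to show that any non-trivial direct sum decomposition of $\Ee$ would have to produce a line bundle direct summand, and then to rule out every possible line bundle summand by combining the restriction $\Ee_{|H} \cong \Ss_H$ with the Ulrich numerical conditions on $\Ee(1)$.

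First I would suppose, for contradiction, that $\Ee$ admits a Krull--Schmidt decomposition $\Ee = \bigoplus_i \Ee_i$ into indecomposable $\Oo_X$-summands (this is available because coherent sheaves on the projective scheme $X$ have finite-dimensional endomorphism algebras, so Fitting's lemma applies). Each $\Ee_i$ is locally free, since a direct summand of a free module over the Noetherian local rings $\Oo_{X,x}$ is again free, and therefore $\op{rank}(\Ee_i)$ is a positive integer with the ranks summing to $4$. The key observation is that if every $\Ee_i$ had rank $\ge 2$, then the only possibility would be two summands of rank exactly $2$. Each would be an indecomposable aCM vector bundle of rank two (direct summands inherit aCM by splitting of cohomology), contradicting Theorem \ref{pprop}. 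Hence some $\Ee_i$ has rank one, and by Lemma \ref{bo2} it must be of the form $\Oo_X(k)$ for some $k\in \ZZ$.

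Next I would restrict such a summand to $H$. Using the sequence (\ref{eqa111}), $\Oo_H(k)$ appears as a direct summand of $\Ee_{|H} \cong \Ss_H \cong \Oo_H \oplus \Oo_H(-1) \oplus \Omega_H^1(1)$. Since $\Omega_H^1(1)$ is indecomposable on $\PP^2$ and Krull--Schmidt applies to coherent sheaves on $\PP^2$, this forces $k\in \{0,-1\}$.

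Finally I would invoke the Ulrich hypothesis. Since $\Ee(1)$ is Ulrich and direct summands of Ulrich sheaves are again Ulrich, $\Oo_X(k+1)$ would have to be a rank-one Ulrich line bundle on $X$. Computing $h^0(\Oo_X)=1$ from (\ref{eqa1}): for $k=-1$ the candidate is $\Oo_X$ itself, which is initialized but has $h^0=1\neq 2=\deg(X)\cdot\op{rank}(\Oo_X)$, so fails the Ulrich equality; for $k=0$ the candidate $\Oo_X(1)$ fails to be initialized since $h^0(\Oo_X)\neq 0$. Both possibilities are excluded, yielding the desired contradiction. The main obstacle in this plan is the $(2,2)$ rank split, where one does not get a line bundle summand for free; this is precisely the step that critically relies on Theorem \ref{pprop}, which I would use as a black box, while the remaining steps amount to Krull--Schmidt on $\PP^2$ plus cohomological bookkeeping with (\ref{eqa1}) and (\ref{eqa111}).
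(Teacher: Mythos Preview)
Your proof is correct and follows essentially the same strategy as the paper: rule out rank-two summands via Theorem \ref{pprop} and rule out line bundle summands by the absence of Ulrich line bundles on $X$. The paper's version is terser---it simply asserts ``there is no Ulrich line bundle on $X$'' and then observes that any nontrivial decomposition of $\Ee(1)$ forces rank-two Ulrich summands, which split by Theorem \ref{pprop}, giving back line bundle summands and a contradiction. Your detour through the restriction $\Ee_{|H}\cong \Ss_H$ and Krull--Schmidt on $\PP^2$ to pin down $k\in\{0,-1\}$ is correct but unnecessary: one can check directly from (\ref{eqa1}) that the only initialized line bundle on $X$ is $\Oo_X$, with $h^0(\Oo_X)=1\neq 2$, so no $\Oo_X(k)$ is Ulrich for any $k$.
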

\begin{proof}
Otherwise, since there is no Ulrich line bundle on $X$, each summand of $\Ee(1)$ would be an Ulrich bundle of rank two. But by Theorem \ref{pprop} any such bundle would split, a contradiction.
\end{proof}

In the previous lines we showed the existence of rank four Ulrich vector bundles on the double plane $X$. On the other hand, we already proved that there are no Ulrich bundles of rank one and two on $X$. Therefore with the next Proposition we show that four is the lowest possible rank for an Ulrich bundle on $X$.

\begin{proposition}\label{noulrich3}
If $\Ee$ is an Ulrich vector bundle of rank $r$ on the double plane $X\subset \PP^3$, then $r$ is divisible by four. 
\end{proposition}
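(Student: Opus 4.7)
The plan is to compute the Hilbert polynomial of $\Ee_{|H}$ via the standard restriction sequence on $X$ and to extract the desired divisibility constraint from the requirement that this polynomial take integer values at every integer.

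First, I derive $\chi(\Ee(t)) = r(t+1)(t+2)$. Since $\Ee$ is Ulrich of rank $r$ on the surface $X$, the Ulrich vanishings $h^i(\Ee(-j)) = 0$ for $1 \le j \le 2$ give $\chi(\Ee(-1)) = \chi(\Ee(-2)) = 0$, while $\chi(\Ee) = h^0(\Ee) = 2r$. As $\chi(\Ee(t))$ is a polynomial in $t$ of degree $\dim X = 2$, these three values determine it uniquely, and a direct calculation yields the claimed formula. (Equivalently, this follows from the linear $\Oo_{\PP^3}$-resolution $0 \to \Oo_{\PP^3}(-1)^{2r} \to \Oo_{\PP^3}^{2r} \to \Ee \to 0$ available for any Ulrich sheaf on a hypersurface via the matrix factorization (\ref{mf}).) Next, tensoring (\ref{eqa1}) with $\Ee(t)$ produces the restriction sequence
\[
0 \to \Ee_{|H}(t-1) \to \Ee(t) \to \Ee_{|H}(t) \to 0,
\]
so writing $P(t) := \chi(\Ee_{|H}(t))$ and applying additivity of Euler characteristic yields the functional equation $P(t) + P(t-1) = r(t+1)(t+2)$.

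Since $\Ee_{|H}$ is locally free of rank $r$ on $H \cong \PP^2$, the polynomial $P(t)$ has degree two in $t$ with leading coefficient $r/2$. Writing $P(t) = \tfrac{r}{2}t^2 + bt + c$ and matching coefficients in the functional equation uniquely determines $b = 2r$ and $c = 7r/4$. In particular, $c = P(0) = \chi(\Ee_{|H})$ must be an integer, which forces $7r/4 \in \ZZ$ and hence $4 \mid r$.

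There is essentially no genuine obstacle to this numerical argument. The only point worth emphasizing is that $\Ee_{|H}$ need not be a direct sum of line bundles on $H$: indeed Example \ref{qwe} already shows that a summand isomorphic to $\Omega_H^1(2)$ may occur. The Euler characteristic recursion sidesteps this issue entirely, since it depends only on the Hilbert polynomial of $\Ee_{|H}$ rather than on any explicit splitting type.
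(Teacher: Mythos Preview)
Your proof is correct and takes a genuinely different route from the paper's. The paper argues structurally: it analyzes the cohomology of $\Ee_{|H}$ carefully enough to invoke a splitting criterion of Ancona--Ottaviani, obtaining an explicit decomposition $\Ee_{|H} \cong \Omega_H^1(2)^{\oplus a} \oplus \Oo_H(1)^{\oplus a} \oplus \Oo_H^{\oplus b}$, and then compares $h^0(\Ee)$ computed two ways to force $a=b$ and hence $r=4a$. Your argument is purely numerical: the recursion $P(t)+P(t-1)=r(t+1)(t+2)$ together with the leading coefficient $r/2$ pins down $P(0)=7r/4$, and integrality of the Euler characteristic does the rest. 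Your approach is more elementary and self-contained, avoiding any external classification of bundles on $\PP^2$; the paper's approach, in exchange, yields the extra structural information that $\Ee_{|H}$ decomposes in the specific form above, which may be of independent interest when one wants to say more about such bundles (as in Example~\ref{qwe}).
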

\begin{proof}
Let us suppose that $\Ee$ is an Ulrich vector bundle of rank $r$ on $X$. We know that $\Ee$ is minimally regular and that $h^0(\Ee(-1))=0$ by \cite[Proposition 2.1]{ESW}. Therefore, from the long exact sequence of cohomology groups associated to (\ref{eqa111}) it is immediate to see that $h^1(\Ee_{|H}(t))=0$ for $t\le -3$. From the $0$-regularity of $\Ee$ we have $h^2(\Ee(-2))=0$. Thus we get $h^2(\Ee_{|H}(-2))=0$ and this implies $h^1(\Ee_{|H}(-1))=0$ by (\ref{eqa111}). In particular, $\Ee_{|H}$ is $0$-regular and we get $h^1(\Ee_{|H}(t))=0$ for $t\ge -1$. On the other hand, we have $h^0(\Ee_{|H}(-1))=h^1(\Ee_{|H}(-2))\neq 0$; otherwise, $\Ee$ would split by Lemma \ref{gg0}. Thus by \cite[Theorem 2.2 and Corollary 2.4]{anc} we get $\Ee_{|H} \cong \Omega_H^1(2)^{\oplus a} \oplus \Oo_H(1)^{\oplus a}\oplus \Oo_H^{\oplus b}$ for $a=h^0(\Ee_{|H}(-1))$ and some $b\in \ZZ_{\ge 0}$. In particular, we have
\begin{align*}
2r=6a+2b&=h^0(\Ee)\\
&=h^0(\Ee_{|H}(-1))+h^0(\Ee_{|H})\\
&=a+(6a+b)=7a+b.
\end{align*}
This implies that $a=b$, and in particular we get $r=4a$.  
\end{proof}

While the assertion of Theorem \ref{pprop} does not extend to higher rank, it holds for  higher dimensional quadrics, even with smaller corank.

\begin{lemma}\label{cor3.9bis}
Let $Q\subset \PP^{n+1}$ with $n\ge 3$, be a quadric hypersurface of corank at least $3$. Any aCM vector bundle of rank two on $Q$ splits.
\end{lemma}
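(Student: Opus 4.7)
The plan is to cut $Q$ down to a double plane via a $3$-dimensional linear section, apply Theorem \ref{pprop} to the restricted bundle, and then propagate the splitting back to $Q$ by iterating Lemma \ref{g2}.

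First I would exploit the assumption $c \ge 3$: after a linear change of coordinates on $\PP^{n+1}$ the defining form of $Q$ can be written as $q=y_1^2+\cdots+y_r^2$ with $r=n+2-c\le n-1$, the remaining coordinates $z_1,\ldots,z_c$ spanning the radical of $q$. I would then choose the $3$-dimensional linear subspace $M=\{y_2=\cdots=y_r=z_1=\cdots=z_{n-r-1}=0\}$, which is the intersection of exactly $n-2$ hyperplanes $H_1,\ldots,H_{n-2}$; on $M$ the form reduces to $y_1^2$, so $M\cong\PP^3$ and $Q\cap M$ is the double plane $X_2[2]$.

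Next, setting $Q_i:=Q\cap H_1\cap\cdots\cap H_i$, I would track the degeneration of $Q_i$ along the flag: these are quadrics whose rank drops from $r$ down to $1$, being irreducible whenever the rank is at least $3$, becoming the pair of distinct hyperplanes $\{y_1\pm\sqrt{-1}\,y_j=0\}$ at rank $2$, and becoming non-reduced double hyperplanes at rank $1$. In each of these regimes the defining linear form of $H_{i+1}$ lies outside every associated prime of $Q_i$, so $H_{i+1}$ gives a regular element on $Q_i$. Tensoring the resulting Koszul sequence by $\Ee$ then produces
\[
0\to\Ee_{|Q_i}(-1)\to\Ee_{|Q_i}\to\Ee_{|Q_{i+1}}\to 0,
\]
and the standard long-exact-sequence chase transports the vanishing of intermediate cohomology from $\Ee$ down to $\Ee_{|Q\cap M}$, which is therefore an aCM vector bundle of rank two on the double plane.

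Theorem \ref{pprop} then yields $\Ee_{|Q\cap M}\cong\Oo_{Q\cap M}(a_1)\oplus\Oo_{Q\cap M}(a_2)$ for some integers $a_1,a_2$. Iterating Lemma \ref{g2} through the pairs $(Q_i,H_{i+1})$ as $i$ descends from $n-3$ to $0$, each splitting of $\Ee_{|Q_{i+1}}$ would force a splitting of $\Ee_{|Q_i}$ with the same pair of twists, and after $n-2$ such applications I would conclude $\Ee\cong\Oo_Q(a_1)\oplus\Oo_Q(a_2)$. The hardest step will be the verification during the descent that $H_{i+1}$ remains a regular element on $Q_i$ as the ambient quadric degenerates through its reducible rank-$2$ stage and its non-reduced rank-$1$ stages, since it is precisely this regularity that ensures both aCM-ness and the applicability of Lemma \ref{g2} at every step of the flag.
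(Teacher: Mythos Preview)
Your argument is correct and considerably more direct than the paper's. The paper proceeds in three stages: it first treats the double hyperplane $X_n[2]$ separately by showing that the restriction of $\Ee$ to every plane in $H_n$ has constant splitting type, invoking the classification of uniform rank-two bundles on $\PP^n$, and then applying Lemma~\ref{gg0}; it handles the base case $n=3$ of corank~$3$ (two distinct hyperplanes in $\PP^4$) by citing an external result \cite[Theorem~3.13]{bhp}; and it finishes the higher-dimensional cases by an inductive argument that exploits the existence of a $3$-dimensional linear subspace $W\subset Q$ through each point to control the evaluation map $H^0(\Ee)\otimes\Oo_Q\to\Ee$ directly.

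Your approach bypasses all three of these ingredients. By cutting with a carefully ordered flag of hyperplanes you reach the double plane $X_2[2]$ uniformly, regardless of whether the starting quadric is integral, reducible, or already non-reduced, and then Lemma~\ref{g2} (which is entirely internal to the paper) carries the splitting back up step by step. The verification that each $H_{i+1}$ is a non-zerodivisor on $Q_i$ is exactly the point you flag: at the rank-$2$ stage the next hyperplane $\{y_j=0\}$ is distinct from the two components $\{y_1\pm\sqrt{-1}\,y_j=0\}$, and at the rank-$1$ stages the hyperplanes $\{z_k=0\}$ are visibly different from $\{y_1=0\}$. This is routine once isolated, so your proof is both self-contained and shorter; the paper's route has the compensating advantage of establishing along the way the stronger fact that $\Ee_{|H_n}$ is uniform, and its inductive step~(c) is reusable for the smaller-corank situation of Corollary~\ref{==1}.
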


\begin{proof}
Let $\Ee$ be an aCM vector bundle of rank two on $Q$.

\quad (a) First assume that $Q$ is a hyperplane with multiplicity two in $\PP^{n+1}$, i.e. $Q =X_n[2]$ for some $n\ge 3$. We choose a three-dimensional linear subspace $V\subset \PP^{n+1}$ so that $V\cap Q$ is a double plane in $V$. Then $\Ee_{|V\cap Q}$ is an aCM vector bundle of rank two on the double plane. This implies that it splits. In particular, its restriction to the reduction of $V\cap Q$, say $H_2:=(V \cap Q)_{\mathrm{red}}$, splits. Moreover $V$ can be chosen so that $H_2$ can be any plane contained in $H_n$, which implies that the splitting type of $\Ee_{|H_2}$ does not change as $H_2$ varies in $H_n$. In particular, $\Ee_{|H_n}$ is a uniform vector bundle of rank two on $H_n$ and this implies that $\Ee_{|H_n}$ splits. Then $\Ee$ also splits due to Lemma \ref{gg0}.

\quad (b) Assume $n=3$. The case in which $Q$ has corank $4$, is true by step (a). In the case when $Q$ has corank $3$, i.e. $Q=M_1\cup M_2$ with $M_1,M_2$ two distinct hyperplanes of $\PP^4$, we may apply \cite[Theorem 3.13]{bhp}.

\quad ({c}) Now assume $n>3$ and that the assertion holds for a lower dimensional projective space. Due to step (a) we may assume that $Q$ is a reduced quadric hypersurface. Take a hyperplane $M\subset \PP^{n+1}$ such that $Q\cap M$ has corank $k+1$, where $k$ is the corank of $Q$. Note that $\Ee
_{|Q\cap M}$ is also aCM. By the inductive assumption we have $\Ee _{|Q\cap M} \cong \Oo _{Q\cap M}(a)\oplus \Oo _{Q\cap M}(b)$ for some integers $a\le b$. Up to a twist we may assume that $b=0$. Since $n>3$ and $Q$ has corank at least $3$, for each $p\in Q$ there is a three-dimensional linear subspace $W\subset Q$ such that $p\in W$. Since $\Ee _{|W\cap M} \cong \Oo _{W\cap M}(a)\oplus \Oo _{W\cap M}$, the argument in step (a) gives $\Ee _{|W}\cong \Oo _W(a)\oplus \Oo _W$. Note that the every point $p\in Q$ is contained in a three-dimensional linear subspace $W\subset Q$. Since $h^1(\Ee (t-1))=0$ for all $t\in\ZZ$, the restriction map $\rho _t: H^0(\Ee (t))\rightarrow  H^0(\Ee _{|Q\cap M}(t)) $ is surjective. Since $h^0(\Ee (t)) =0$ for $t\ll 0$ and $h^0(\Ee_{|Q\cap M}(t-1)) =0$ for all $t\leq 0$, we get that $h^0(\Ee (-1)) =0$ and that $\rho _0$ is bijective. Let $\eta : H^0(\Ee )\otimes \Oo _Q\rightarrow \Ee$ denote the evaluation map.

First assume $a=0$. We get $h^0(\Ee )=2$ and that the evaluation map $\eta$ is an isomorphism
at all points of the ample divisor $Q\cap M$. Since $H^0(\Ee )\otimes \Oo _Q$ and $\Ee$ are vector bundles with the same rank, $\eta$ is an isomorphism.

Now assume $a<0$. We have $h^0(\Ee )=1$. Fix $p\in Q$ and take a three-dimensional linear space $W\subset Q$ such that $p\in W$. Since $\Ee _{|W}\cong \Oo _W(a)\oplus \Oo _W$, $\eta$ induces a map of rank one from the fiber of $H^0(\Ee )\otimes \Oo _Q$ to the fiber $\Ee_p$. Thus $\eta$ is injective and $\Ee /\mathrm{Im}(\eta )$ is a line bundle whose restriction to each $W$ is isomorphic to $\Oo _W(a)$. Thus we get $\Ee /\mathrm{Im}(\eta )\cong \Oo _Q(a)$, in particular $\Ee$ splits.
\end{proof}

\begin{corollary}\label{==1}
Let $Q\subset \PP^{n+1}$ with $n\ge 7$, be any quadric hypersurface. If $\Ee$ is an aCM vector bundle of rank two on $Q$, then it splits.
\end{corollary}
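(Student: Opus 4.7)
The plan is to reduce to Lemma \ref{cor3.9bis} by a sequence of hyperplane sections that raise the corank of $Q$ one step at a time, and then descend the splitting via Lemma \ref{g2}. Let $k$ denote the corank of $Q$. If $k\ge 3$, Lemma \ref{cor3.9bis} applies immediately (using $n\ge 7\ge 3$), so I would focus on the case $k\in\{0,1,2\}$.

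The core local ingredient is a polar computation. Pick a smooth point $p\in Q$ and take $M=T_pQ=\{f(p,\cdot )=0\}$. Because $v\in \ker f$ forces $f(p,v)=0$, the singular locus $\ker f$ is automatically contained in $T_pQ$, and in fact $M^{\perp_f}=\ker f+\mathbf{k}\cdot p$. Since $p\in Q$, this whole subspace already lies in $M$, so $\ker(f|_M)=M^{\perp_f}\cap M$ has vector dimension $k+1$. Thus $Q\cap M$ is a quadric of corank exactly $k+1$ in $M\cong \PP^n$. As $Q$ has corank at most $2$ and hence rank at least $n$, it is irreducible, so $M$ is not a component of $Q$. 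Together with the short exact sequence $0\to \Ee (-1)\to \Ee \to \Ee _{|Q\cap M}\to 0$ and the vanishing of the intermediate cohomology of $\Ee$, this gives that $\Ee _{|Q\cap M}$ is again an aCM vector bundle of rank two on $Q\cap M$.

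Iterating this construction $3-k$ times yields a chain of quadrics $Q=Q_0\supset Q_1\supset \cdots \supset Q_{3-k}$, where $Q_i$ has corank $k+i$ inside a $\PP^{n+1-i}$, and $\Ee _{|Q_i}$ is aCM of rank two at each stage. In particular $Q_{3-k}$ is a quadric of corank $3$ sitting inside $\PP^{n-2+k}$. The hypothesis $n\ge 7$ guarantees that this ambient projective space has dimension $n-2+k\ge 5$, so Lemma \ref{cor3.9bis} applies to $\Ee _{|Q_{3-k}}$ and yields that it splits as a direct sum of two line bundles.

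Finally, I would apply Lemma \ref{g2} iteratively $3-k$ times: at each stage $\Ee _{|Q_i}$ splits as a direct sum of line bundles, $Q_i$ is a hyperplane section of the irreducible quadric $Q_{i-1}$, and Lemma \ref{g2} lifts the splitting to $\Ee _{|Q_{i-1}}$; after $3-k$ steps one concludes that $\Ee =\Ee _{|Q_0}$ splits. The main delicate point is the polar calculation controlling how the corank transforms under a tangent hyperplane section, together with the accompanying irreducibility checks needed so that Lemma \ref{g2} can be applied at every step; both are controlled by the bound $n\ge 7$.
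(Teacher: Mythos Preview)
Your argument is correct and takes a genuinely different route from the paper's. The paper, when the corank of $Q$ is at most $2$, cuts by a \emph{general} linear subspace $V\subset\PP^{n+1}$ of dimension $6$ so that $Q\cap V$ is a \emph{smooth} $5$-dimensional quadric, invokes Ottaviani's splitting criterion \cite{ottav} on that smooth quadric, and then lifts the splitting back to $Q$ via the argument in step (c) of the proof of Lemma~\ref{cor3.9bis}. You instead cut by \emph{tangent} hyperplanes to push the corank \emph{up} to $3$, apply Lemma~\ref{cor3.9bis} directly, and lift via Lemma~\ref{g2}. Your polar computation $\ker(f|_M)=M\cap M^{\perp_f}=\ker f+\mathbf{k}p$ is correct, and the bound $n\ge 7$ indeed keeps the rank of each intermediate $Q_i$ at least $n-2+k\ge 5\ge 3$, so every $Q_i$ with $i\le 2-k$ stays irreducible and has smooth points, which is exactly what you need both to iterate the tangent section and to feed Lemma~\ref{g2}.

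The trade-off: your approach is entirely self-contained within the paper (no appeal to \cite{ottav}), and in fact your bookkeeping shows it would already work for $n\ge 6$; the paper's approach reaches a smooth quadric in a single linear slice but imports an external result. One small presentational point: you explicitly check irreducibility only for $Q_0$, though you correctly flag at the end that the check is needed at every step; it would strengthen the write-up to record the rank computation $\operatorname{rank}(Q_i)=n+2-k-2i\ge 3$ for all $i\le 2-k$ in the body of the argument.
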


\begin{proof}
If $Q$ has corank at least $3$, then we may use Lemma \ref{cor3.9bis}. Thus we assume that $Q$ has corank at most $2$. In particular, there exists a linear subspace $V\subset \PP^{n+1}$ such that $\dim V =6$ and $V\cap Q$ is a smooth quadric hypersurface of $V$. By \cite{ottav} the restriction $\Ee _{|Q\cap V}$ splits. Now we may proceed as in step ({c}) of the proof of Lemma \ref{cor3.9bis}.
\end{proof}



\section{Wildness of the double plane}

\begin{lemma}\label{pop}
Any sheaf of rank $1/2$ on $X$ with pure depth $2$, is isomorphic to $\Oo_H(a)$ for some $a\in \ZZ$.
\end{lemma}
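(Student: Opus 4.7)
My plan is to show that multiplication by $w$ annihilates $\Ee$, after which $\Ee$ becomes a coherent sheaf on the smooth plane $H\cong\PP^2$ whose rank and depth force it to be a line bundle.

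First I would observe that the underlying scheme $X=\{w^2=0\}\subset \PP^3$ has only one minimal prime, corresponding to the generic point $\eta$ of $H$. Consequently, the pure-depth-two hypothesis makes $\Ee$ maximal Cohen-Macaulay at every closed point of $\mathrm{Supp}(\Ee)=H$, so $\mathrm{Ass}(\Ee)\subseteq\{\eta\}$; the same holds for $\Ee(1)$ since $\Oo_X(1)$ is invertible. Hence neither $\Ee$ nor $\Ee(1)$ admits a nonzero subsheaf supported on a proper closed subscheme of $H$. I would then analyze the stalk at $\eta$: because $\mu(\Oo_X)=2$, Definition \ref{def} translates the rank-$1/2$ condition into $\mathrm{length}_{\Oo_{X,\eta}}(\Ee_\eta)=1$, and the Artinian local ring $\Oo_{X,\eta}\cong k(H)[w]/(w^2)$ has the residue field $k(H)$ as its unique length-one module, on which $w$ acts trivially. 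Therefore the $\Oo_X$-linear morphism $f_w:\Ee\to\Ee(1)$ given by multiplication by $w$ has image vanishing at $\eta$, and, being a subsheaf of the torsion-free sheaf $\Ee(1)$, must vanish identically; so $\Ee$ is canonically an $\Oo_H$-module.

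Finally, once $w$ annihilates $\Ee$, the depths over $\Oo_{H,x}$ and $\Oo_{X,x}$ agree at each closed point $x\in H$, so $\mathrm{depth}_{\Oo_{H,x}}(\Ee_x)=2$. Since $H\cong\PP^2$ is a smooth surface, Auslander--Buchsbaum forces $\Ee$ to be locally free over $\Oo_H$, and a multiplicity comparison with $\Oo_H$ shows that its generic $\Oo_H$-rank equals one. Thus $\Ee$ is a rank-one locally free $\Oo_{\PP^2}$-module, and $\Ee\cong\Oo_H(a)$ for some $a\in\ZZ$. The only point that requires care is the passage from rank $1/2$ over $\Oo_X$ to length one at the generic stalk, which I expect to be routine once Definition \ref{def} is unwound; beyond this, I foresee no serious obstacle.
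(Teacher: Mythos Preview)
Your argument is correct and follows essentially the same route as the paper: both show that multiplication by $w$ kills $\Ee$, so that $\Ee$ becomes a reflexive rank-one $\Oo_H$-sheaf, hence a line bundle. The only cosmetic difference is that the paper phrases the vanishing of $\mathrm{Im}(f_w)$ via rank additivity (the nonzero torsion-free kernel already accounts for the full rank $1/2$, forcing the torsion-free image to have rank zero and hence vanish), whereas you compute the generic stalk $\Ee_\eta$ directly as the unique length-one $k(H)[w]/(w^2)$-module; these are two expressions of the same observation.
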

\begin{proof}
Let $\Ee$ be a sheaf of rank $1/2$ on $X$ with pure depth $2$, in particular it is reflexive by \cite[Theorem 1.9]{HK}. Then $\Ff:=\mathrm{ker}(f_w)$ and $\Gg:=\mathrm{Im}(f_w)\subset \Ee(1)$ as in (\ref{999}) are torsion-free (or trivial) by \cite[Proposition 1.2.9]{BH}. Since $w^2\Ee=0$, $f_w$ is not injective. This implies that $\Ff$ is non-zero. Thus by additivity of the rank, we have $\mathrm{rank}(\Ff)=1/2$ and $\Gg \cong 0$. Now $\Ff$ is a reflexive $\Oo_H$-sheaf of rank one. So we get $\Ff \cong \Oo_H(a)$ for some $a\in \ZZ$.
\end{proof}

\begin{example}\label{ex1}
Fix a plane curve $C\subset H$ and consider its ideal sheaf in $X$ with the exact sequence
\begin{equation}\label{eq4}
0\to \Ii_C\to \Oo_{X} \to \Oo_C \to 0.
\end{equation}
Since $\Ii_{C, H}\cong \Oo_H(-k)$ for $k=\deg (C)$, it is aCM, in particular the map $H^0(\Oo_H(t)) \rightarrow H^0(\Oo_C(t))$ is surjective. Thus we get that the map $H^0(\Oo_{X}(t)) \rightarrow H^0(\Oo_C(t))$ is surjective, because it factors through $H^0(\Oo_H(t))$. This implies that $\Ii_C$ is a non-locally free aCM sheaf of rank one, because $\Oo_{X}$ is aCM. Note that $C$ is not a Cartier divisor of $X$, in particular $\Ii_C$ is not locally free along $C$.

For a fixed plane curve $C\subset H$ of degree $d$, the injection $\Oo_H(-1)\rightarrow \Oo_{X}$ in (\ref{eq4}) factors through $\Ii_C$. Now the cokernel of the map $\Oo_H(-1) \rightarrow \Ii_C$ is $\Ii_{C, H}$, which is $\Oo_H(-d)$. So we get an exact sequence
\begin{equation}\label{eqbbb1}
0\to \Oo_H(-1) \to \Ii_C \to \Oo_H(-d) \to 0.
\end{equation}
By case $m=1$ of Lemma \ref{ttt} for $X$ we get that $\PP \Ext_{\PP^3}^1(\Oo_H(1-d), \Oo_H)\cong \PP H^0(\Oo_H(d))$ and this space parametrizes the plane curves of degree $d$. Therefore $\Ii_C(1)$ determines an element in $\PP \Ext_{\PP^3}^1(\Oo_H(1-d), \Oo_H)$. Notice that indeed we get $\Ext_{\PP^3}^1(\Oo_H(1-d), \Oo_H) \cong \Ext_X^1(\Oo_H(1-d), \Oo_H)$, as it is easily checked applying the functor $\Hom_X(-,\Oo_H(d-1))$ to the surjection $\Oo_X\to\Oo_H$.
\end{example}

\begin{proof}[Proof of Proposition \ref{verywild}:]
Fix a positive integer $k$ and take an integer $d>0$ such that $\binom{d+2}{2} >k$. Let $\Delta \subset \PP H^0(\Oo _H(d))$ be the set parametrizing all smooth curves $C\subset H$ of degree $d$. $\Delta$ is a non-empty Zariski open subset of the projective space $\PP H^0(\Oo _H(d))$ of dimension $\binom{d+2}{2} -1 \ge k$. Thus $\Delta$ is a  non-empty algebraic variety of dimension at least $k$. For any $C\in \Delta$, $C$ is the set of all $p\in H$ at which $\Ii _C$ is not locally free. In particular, if $C, D\in \Delta$ and $C\ne D$, we have $\Ii _C\ncong \Ii _D$. Then we may use the family $\{\Ii _C\}_{C\in \Delta}$ to get the assertion.
\end{proof}

Now we classify aCM sheaf of rank one on $X$ to obtain Theorem \ref{aprop2}.

\begin{proof}[Proof of Theorem \ref{aprop2}:]
Let $\Ee$ be a minimally regular aCM sheaf of rank one on $X$ We get a surjective map $\pi : \Ee \rightarrow \Ii_A(1)$ for a closed subscheme $A\subsetneq X$ in Possible Cases \ref{poss}.

In cases (a), (c), (d), (e) and (f), the surjective map $\pi: \Ee \rightarrow \Ii _A(1)$ is an isomorphism, because $\Ee$ has rank $1$ with pure depth two, in particular it is reflexive by \cite[Theorem 1.9]{HK}. In case (a) and (f), $\Ee$ is isomorphic to $\Oo_X$ and $\Oo_X(1)$, respectively. In case (c), $\Ee\cong \Ii_L(1)$ and $\Ee (-1)$ is as in (iii) with a line as the plane curve; see Example \ref{ex1}. Cases (d) and (e) are excluded, because we have $h^1(\Ii _A(-1)) =\deg (A)$ from the standard sequence for $A\subset X$;
$$0\to \Ii_A(-1) \to \Oo_X(-1) \to \Oo_A(-1) \to 0,$$
and this implies that $A$ is not aCM. Finally in case (b), we have $\Ii_A(1) \cong \Oo_H$ and by Lemma \ref{kernelacm} so by Example \ref{ex1} we get (ii) or (iii).
\end{proof}


Now we discuss wildness in higher rank. For a fixed $r\in (1/2)\NN$ that is at least one, take two positive integers $r_1$ and $r_2$ such that $r_1+r_2=2r$ together with two sequences of integers $\overrightarrow{k}=(k_1, \ldots, k_{r_1})\in \ZZ^{\oplus r_1}$ and $\overrightarrow{m}=(m_1, \ldots, m_{r_2})\in \ZZ^{\oplus r_2}$. Define two vector bundles on $H$ that split as follows:
$$\Aa:=\oplus _{j=1}^{r_1} \Oo _H(k_j)~,~\Bb:=\oplus _{h=1}^{r_2} \Oo _H(m_h).$$
Then $\Gamma:=\PP \Ext_{\PP^3}^1(\Bb, \Aa)$ is of dimension $-1+\sum_{j,h} \max \{ 0, {3+k_j-m_h \choose 2}\}$ by case $m=1$ of Lemma \ref{ttt} for $X$ and each element $\lambda \in \Gamma$ corresponds to a unique aCM sheaf $\Ee _\lambda$ on $X$ of rank $r$, given as an extension of $\Bb$ by $\Aa$. Note that all sheaves $\Ee_{\lambda}$ are layered.

\begin{proposition}\label{iii0}
Fix $r\in (1/2)\NN$, $r \geq 1$, and assume $(r_1, r_2)=(2r-1,1)$ with $\overrightarrow{k}=(k, \ldots, k)$ and $\overrightarrow{m}=(m)$ such that $m < k$ and ${3+k-m\choose 2} \ge 2r-1$. Then for a general $\lambda\in \Gamma$, the sheaf $\Ee _\lambda$ is indecomposable.
\end{proposition}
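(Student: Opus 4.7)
Suppose $\Ee_\lambda$ admits a nontrivial decomposition $\Ee_\lambda = \Ff \oplus \Gg$. The plan is to show that the locus of such $\lambda$ is a proper closed subvariety of $\Gamma$.

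The first step is a $\Hom$-calculation. Since both $\Oo_H(k)$ and $\Oo_H(m)$ are $\Oo_H$-modules, any $\Oo_X$-homomorphism between them is an $\Oo_H$-homomorphism, hence $\Hom_X(\Aa,\Bb)=H^0(\Oo_H(m-k))^{\oplus(2r-1)}=0$ because $m<k$. Applying $\Hom_X(-,\Bb)$ to the defining extension and using that $\Ext^1_X(\Bb,\Bb)$ embeds into $\Ext^1_X(\Ee_\lambda,\Bb)$, one finds $\Hom_X(\Ee_\lambda,\Bb)\cong \Hom_X(\Bb,\Bb)=\mathbf{k}$. Since $\Hom_X(\Ff,\Bb)\oplus\Hom_X(\Gg,\Bb)$ is one-dimensional, exactly one summand — say $\Gg$ — maps trivially to $\Bb$, forcing $\Gg\subseteq\Aa$.

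Next, I would identify $\Gg$ as a Krull--Schmidt summand of $\Aa$. A direct check using $\Ee_\lambda=\Ff\oplus\Gg$ and $\Gg\subseteq\Aa$ shows $\Aa=(\Aa\cap\Ff)\oplus\Gg$. Setting $\Aa':=\Aa\cap\Ff$, the Krull--Schmidt theorem applied to $\Aa=\Oo_H(k)^{\oplus(2r-1)}$ (whose summands have local endomorphism ring $\mathbf{k}$) gives $\Gg\cong\Oo_H(k)^{\oplus s}$ and $\Aa'\cong\Oo_H(k)^{\oplus(2r-1-s)}$ for some $1\le s\le 2r-1$. Under the corresponding direct-sum decomposition
$$\Ext^1_{\PP^3}(\Bb,\Aa)=\Ext^1_{\PP^3}(\Bb,\Aa')\oplus\Ext^1_{\PP^3}(\Bb,\Gg),$$
the class $\lambda$ lies in $\Ext^1_{\PP^3}(\Bb,\Aa')$, because the pushforward extension $0\to\Gg\to\Ee_\lambda/\Aa'\to\Bb\to 0$ is split: indeed, $\Ee_\lambda/\Aa'=\Ff/\Aa'\oplus\Gg\cong\Bb\oplus\Gg$, since $\Ff/(\Aa\cap\Ff)\cong\Ee_\lambda/\Aa=\Bb$.

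Set $V:=\Ext^1_{\PP^3}(\Oo_H(m),\Oo_H(k))$, so $\dim V=\binom{3+k-m}{2}$. The decomposable locus is contained in
$$D=\bigcup_{s=1}^{2r-1}\;\bigcup_{\Aa=\Aa'\oplus\Gg,\ \mathrm{rank}\,\Gg=s}\Ext^1_{\PP^3}(\Bb,\Aa').$$
For each $s$, the space of decompositions is parametrized by $\mathrm{Gr}(s,2r-1)$ of dimension $s(2r-1-s)$, with linear fibers of dimension $(2r-1-s)\dim V$, so the $s$-th layer has dimension at most $(2r-1-s)(s+\dim V)$. Comparing with $\dim\Ext^1_{\PP^3}(\Bb,\Aa)=(2r-1)\dim V$, the inequality $(2r-1-s)(s+\dim V)<(2r-1)\dim V$ is equivalent to $\dim V>2r-1-s$, which holds for every $s\ge 1$ under the hypothesis $\dim V\ge 2r-1$. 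Hence $D$ is a proper closed subvariety, and its projectivization is a proper subvariety of $\Gamma$, so a general $\lambda$ gives indecomposable $\Ee_\lambda$.

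The main obstacle is the structural reduction in the first two paragraphs: pinning down, via the uniqueness of the map $\Ee_\lambda\to\Bb$, that any decomposition must split off a Krull--Schmidt summand of $\Aa$ and that this causes a vanishing of the extension class in a direct factor. Once this is secured, the Grassmannian-versus-extension dimension count in the last paragraph is elementary and works exactly at the stated threshold $\binom{3+k-m}{2}\ge 2r-1$.
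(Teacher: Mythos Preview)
Your proof is correct and reaches the same conclusion as the paper, but the two arguments differ in the tools used at both key steps.

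For the structural reduction, the paper views $\Ee_\lambda$ as a pure $\Oo_{\PP^3}$-sheaf and invokes uniqueness of the Harder--Narasimhan filtration: since $k>m$, the inclusion $\Aa\subset\Ee_\lambda$ is the HN filtration, so any decomposition $\Ff_1\oplus\Ff_2$ must respect it and one factor lands inside $\Aa$. Your route via $\Hom_X(\Ee_\lambda,\Bb)\cong\mathbf{k}$ (which follows already from $\Hom_X(\Aa,\Bb)=0$; the remark about $\Ext^1_X(\Bb,\Bb)$ is not needed) is more elementary and avoids semistability entirely. For the endgame, the paper writes $\lambda=(\epsilon_1,\dots,\epsilon_{2r-1})$, observes that for general $\lambda$ the $\epsilon_i$ are linearly independent (this is exactly where $\binom{3+k-m}{2}\ge 2r-1$ enters), and then shows that a splitting forces, after a $\mathrm{GL}(2r-1,\mathbf{k})$-change of basis, one coordinate $\overline{\epsilon}_i$ to vanish, contradicting independence. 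Your dimension count is the same statement in disguise: the condition $\lambda\in\Ext^1(\Bb,\Aa')$ for some summand $\Aa'\cong\Oo_H(k)^{\oplus(2r-1-s)}$ is precisely that the tensor $\lambda\in V\otimes\mathbf{k}^{2r-1}$ has rank $\le 2r-1-s$, i.e.\ the $\epsilon_i$ span a subspace of dimension $<2r-1$; this determinantal locus is proper exactly when $\dim V\ge 2r-1$. So the threshold appears for the same reason in both proofs. Your approach has the mild advantage of treating the extreme case $\Gg=\Aa$ uniformly rather than as a separate case.
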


\begin{proof}
For a general $\lambda \in \Gamma$, set $\Ee := \Ee _\lambda$. Up to a twist, i.e. taking $\Ee$ instead of $\Ee$ for some $t\in \ZZ$, we may assume that $m=0$. We have $\lambda = (\epsilon _1,\dots ,\epsilon _{2r-1})$ with $\epsilon _i\in \Ext_{\PP^3}^1(\Oo _H,\Oo _H(k))$. Since ${3+k\choose 2} \ge 2r-1$ and $\lambda$ is general in $\Gamma$, the extensions $\epsilon _1, \dots ,\epsilon _{2r-1}$ are linearly independent.

Assume that $\Ee \cong \Ff _1\oplus \Ff _2$. Here we consider sheaves such as $\Oo _H(t)$, $\Ee$, $\Ff _1$ and $\Ff _2$ as $\Oo _{\PP^3}$-sheaves, seeing $\Oo _X$ as a quotient of $\Oo _{\PP^3}$. From this point of view these sheaves are pure sheaves of depth $2$ on $\PP^3$ and we may apply the notion of (semi-)stability for pure sheaves; see \cite{Simp}. Note that $0\subset \Aa \subset \Ee$ is the Harder-Narasimhan filtration of $\Ee$, because $k>m=0$ and both of $\Aa$ and $\Bb=\Oo_H$ are semistable. By uniqueness of the Harder-Narasimhan filtrations of $\Ee$ and $\Ff_i$ for each $i$, $\Aa$ must be the direct sum of the first subsheaves of $\Ff_1$ and $\Ff_2$ in their filtrations. Then, due to rank counting, one of the two factors of $\Ee$, say $\Ff_1$, is a factor of $\Aa$. So we have $\Aa \cong \Ff_1 \oplus \Gg$ for some $\Gg$, while the other one $\Ff _2$, is isomorphic to either $\Oo _H$ (the case of $\Aa =\Ff_1$) or an extension of $\Oo _H$ by $\Gg$.

First assume $\Ff _2\not \cong \Oo _H$, that is, $\Gg \ne 0$. Each $\Oo _H(t)$ is simple and there is an integer $s\in \{1,\ldots, 2r-2\}$ such that $\Ff _1 \cong \Oo _H(k)^{\oplus s}$ and $\Gg \cong \Oo _H(k) ^{\oplus (2r-1-s)}$. Taking instead of $\Ff _1$ a direct factor of $\Ff _1$ with minimal rank, it is sufficient to consider only the case $s=1$ for contradiction. Since $\Oo _H(k)$ is simple, we have $\mathrm{Aut}(\Aa) \cong \mathrm{GL}(2r-1,\mathbf{k})$. Hence, up to an element of $ \mathrm{GL}(2r-1,\mathbf{k})$ we may assume that $\Ff _1$ is the first factor of $\Aa$. With this new basis of $\Oo_H(k)^{\oplus (2r-1)}$, set $\lambda=(\overline{\epsilon}_1, \ldots, \overline{\epsilon}_{2r-1})$. Then $\overline{\epsilon}_1$ corresponds to the extension of $\Oo_H$ by $\Oo_H(k)$ with $\Ee /j(\Gg ) \cong \Oo _H(k)\oplus \Oo _H$ as its middle term, where $j: \Gg \rightarrow \Ff_2 \rightarrow \Ee$ is the composition. Thus we get that $\overline{\epsilon}_1$ is zero, contradicting to the linear independence of $\overline{\epsilon}_1, \ldots, \overline{\epsilon}_{2r-1}$.

Now assume $\Gg =0$, that is, $\Ee \cong \Oo _H\oplus \Oo _H(k)^{\oplus (2r-1)}$. The extension class $\lambda$ induces a surjection $\Ee \rightarrow\Oo _H$. Since $k$ is positive, the extension class
$\lambda$ is induced by the projection of $\Oo _H\oplus \Oo _H(k)^{\oplus (2r-1)}$ onto its first factor. Thus we get $\lambda=0$, contradicting Lemma \ref{ttt}.
\end{proof}

\begin{lemma}\label{iii1}
Assume the same numeric invariants as in Proposition \ref{iii0}. For general $\lambda, \lambda'\in \Gamma$, we have $\Ee_{\lambda} \cong \Ee_{\lambda'}$ if and only if there is $g\in \mathrm{GL}(2r-1,\mathbf{k})$ such that $g \cdot \lambda =\lambda'$.
\end{lemma}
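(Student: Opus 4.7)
The plan is as follows. By the twist reduction used in the proof of Proposition \ref{iii0} we may assume $m=0$, so that $\Aa = \Oo_H(k)^{\oplus (2r-1)}$ has strictly larger slope than $\Bb = \Oo_H$. The ``if'' direction holds for arbitrary $\lambda, \lambda' \in \Gamma$ and is the standard Yoneda--extension argument: if $g \in \mathrm{GL}(2r-1,\mathbf{k}) = \mathrm{Aut}(\Aa)$ satisfies $g \cdot \lambda = \lambda'$ in $\Gamma = \PP\Ext_{\PP^3}^1(\Oo_H, \Aa)$, choose representatives $\epsilon, \epsilon' \in \Ext_{\PP^3}^1(\Oo_H, \Aa)$ with $g_* \epsilon = c \epsilon'$ for some $c \in \mathbf{k}^*$; then the triple $(g, \phi, c \cdot \mathrm{id}_{\Oo_H})$ completes to a morphism between the defining extensions, and the middle arrow $\phi \colon \Ee_\lambda \to \Ee_{\lambda'}$ is an isomorphism by the five lemma.

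For the ``only if'' direction, let $\phi \colon \Ee_\lambda \xrightarrow{\sim} \Ee_{\lambda'}$ be an isomorphism. The decisive input is the Harder--Narasimhan identification already made in the proof of Proposition \ref{iii0}: for every nonzero $\lambda$ the filtration $0 \subset \Aa \subset \Ee_\lambda$ is the Harder--Narasimhan filtration, because $\Aa$ and $\Oo_H$ are semistable pure sheaves of depth $2$ on $\PP^3$ with $\mathrm{slope}(\Aa) > \mathrm{slope}(\Oo_H)$, and $\Ee_\lambda$ itself fails to be semistable. Uniqueness of the HN filtration forces $\phi$ to send the copy of $\Aa$ inside $\Ee_\lambda$ onto the copy of $\Aa$ inside $\Ee_{\lambda'}$. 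Since $\Oo_H(k)$ is simple, the restriction $g := \phi|_{\Aa}$ lies in $\mathrm{Aut}(\Aa) = \mathrm{GL}(2r-1,\mathbf{k})$, and the induced automorphism on the quotient $\Oo_H$ is multiplication by some $c \in \mathbf{k}^*$.

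Chasing the resulting morphism of short exact sequences yields $g_* \epsilon = c \epsilon'$ in $\Ext_{\PP^3}^1(\Oo_H, \Aa)$; projectivizing, this is precisely $g \cdot \lambda = \lambda'$ in $\Gamma$. The condition ``general'' enters only through the requirement that the HN filtration of $\Ee_\lambda$ and $\Ee_{\lambda'}$ coincide with the expected one, which in fact holds for every nonzero $\lambda, \lambda'$, so the conclusion is actually valid on a Zariski open subset of $\Gamma \times \Gamma$. The entire technical content is the HN identification imported from Proposition \ref{iii0}; the remainder is routine Yoneda bookkeeping, and no genuine obstacle arises.
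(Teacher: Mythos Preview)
Your proof is correct and follows essentially the same route as the paper: both reduce to $m=0$, identify $0 \subset \Aa \subset \Ee_\lambda$ as the Harder--Narasimhan filtration (viewing everything as pure depth-two sheaves on $\PP^3$), and conclude that any isomorphism $\Ee_\lambda \cong \Ee_{\lambda'}$ restricts to an element of $\mathrm{Aut}(\Aa) \cong \mathrm{GL}(2r-1,\mathbf{k})$. You spell out the Yoneda bookkeeping and the five-lemma step for the ``if'' direction more explicitly than the paper (which dismisses it as obvious), but the substance is identical.
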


\begin{proof}
Set $\Ee:=\Ee_{\lambda}$ and $\Ee':=\Ee_{\lambda '}$. Up to shift, $\Ee$ and $\Ee'$ are indecomposable extensions of $\Bb\cong \Oo _H$ by $\Aa \cong \Oo _H(k)^{\oplus (2r-1)}$, where we have $\mathrm{Aut}(\Aa) \cong \mathrm{GL}(2r-1,\mathbf{k})$ because $\Oo _H$ is simple. Consider all these sheaves as pure sheaves of depth two on $\PP^3$ and use semistability of pure sheaves with respect to the polarization $\Oo _{\PP^3}(1)$. Then $\Aa$ is semistable and the first step of the Harder-Narasimhan filtration of both $\Ee $ and $\Ee '$. Hence every isomorphism $\Ee \rightarrow \Ee '$ induces an automorphism of $\Aa$. The other implication is obvious.
\end{proof}

\begin{corollary}\label{iii2}
For fixed positive integer $n$ and $r\in\left(\frac{1}{2}\right)\NN$, $r\geq 1$, there exists a family $\Delta$ of indecomposable layered aCM sheaves on $X$ of rank $r$ with $\dim \Delta  \ge n$, where each isomorphism class of sheaves appears only finitely many times in $\Delta$.
\end{corollary}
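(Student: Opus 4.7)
The plan is to take the variety $\Gamma = \PP \Ext^1_{\PP^3}(\Oo_H, \Oo_H(k)^{\oplus(2r-1)})$ of extension classes appearing in Proposition \ref{iii0} and Lemma \ref{iii1}, let $k$ grow so that $\dim\Gamma$ exceeds $n$ plus the maximum dimension of an orbit of $\Aut(\Aa)$, and then cut $\Gamma$ with a general linear subspace of dimension $n$ to obtain a family $\Delta$ with finite fibers over the set of isomorphism classes.

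More precisely, I would begin by choosing an integer $k\ge 0$ so large that
\[
\binom{3+k}{2}\ge 2r-1 \qquad\text{and}\qquad (2r-1)\binom{3+k}{2}\ \ge\ n+(2r-1)^2.
\]
The first inequality is the hypothesis of Proposition \ref{iii0} with $\Aa=\Oo_H(k)^{\oplus(2r-1)}$ and $\Bb=\Oo_H$, while the second will be used to leave room for an $n$-dimensional transversal. By the dimension formula stated just before Proposition \ref{iii0}, one has $\dim\Gamma=(2r-1)\binom{3+k}{2}-1$. Let $\Gamma_0\subset\Gamma$ denote the intersection of the two non-empty Zariski open subsets on which the conclusions of Proposition \ref{iii0} and Lemma \ref{iii1} hold; then every $\Ee_\lambda$ with $\lambda\in\Gamma_0$ is indecomposable and layered, and two such sheaves $\Ee_\lambda, \Ee_{\lambda'}$ are isomorphic if and only if $\lambda$ and $\lambda'$ lie in the same orbit of the natural action of $G:=\mathrm{GL}(2r-1,\mathbf{k})\cong\Aut(\Aa)$.

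Next, the scalar subgroup $\mathbf{k}^{*}\cdot I\subset G$ acts trivially on $\Gamma$, so the $G$-action factors through $\mathrm{PGL}(2r-1,\mathbf{k})$ and every $G$-orbit in $\Gamma$ has dimension at most $(2r-1)^{2}-1$. I would then take a general linear subspace $L\subset\Gamma$ of dimension $n$ and set $\Delta := L\cap\Gamma_0$. The inequality $n+(2r-1)^{2}-1<\dim\Gamma$ guarantees, by a standard dimension count for linear sections of the locally closed $G$-orbit stratification, that a generic $L$ meets each $G$-orbit in a finite (possibly empty) set, so that the map $\lambda\mapsto [\Ee_\lambda]$ has finite fibers on $\Delta$. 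Combined with Proposition \ref{iii0} and Lemma \ref{iii1}, this yields a family $\Delta$ of indecomposable layered aCM sheaves of rank $r$ with $\dim\Delta\ge n$ in which each isomorphism class appears only finitely often.

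The only mildly delicate point is the transversality statement used in the last step; it is however entirely standard, resting on the fact that the $G$-orbits in $\Gamma$ are constructible of bounded dimension (by Chevalley's theorem), so that a generic linear subspace of the complementary dimension cuts each orbit in finitely many points. Once this is granted, the argument is purely a dimension count, and everything else has already been done in Proposition \ref{iii0} and Lemma \ref{iii1}.
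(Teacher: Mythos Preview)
Your proposal is correct and follows essentially the same approach as the paper: make $k$ large so that $\dim\Gamma$ exceeds $n$ plus the maximal orbit dimension of $\mathrm{GL}(2r-1,\mathbf{k})$, then slice by a variety of the right dimension and invoke Proposition \ref{iii0} and Lemma \ref{iii1}. The paper phrases the last step slightly differently---it picks a variety $\Delta'$ of complementary dimension to a \emph{general} orbit, observes that transversality is open, and then explicitly shrinks $\Delta'$ to a Zariski open subset $\Delta$ meeting no orbit in positive dimension---whereas you take a general linear $n$-plane and argue that it already meets every orbit finitely; both arguments rest on the same dimension count and the same mild hand-wave about transversality to the orbit stratification.
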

\begin{proof}
By Proposition \ref{iii0} and Lemma \ref{iii1}, a general choice of $(2r-1)$-dimensional subspace in $\Ext_{\PP^3}^1(\Bb, \Aa)$ gives an indecomposable aCM sheaf of rank $r$ on $X$. But the dimension of such choices can be made arbitrarily large by taking $k$ sufficiently large compared to $m$, due to Lemma \ref{ttt}. To get isomorphism classes of sheaves we need to factor by the action of $\mathrm{GL}(2r-1,\mathbf{k})$. We take a general orbit $F$ of this action and choose a variety $\Delta'$ intersecting $F$ transversally and with complementary dimension, so that it intersects $F$ at finitely many points (at least one). Then $\Delta '$ intersects transversally and at finitely many point all fibers near $F$. We take as $\Delta$ a non-empty Zariski open subset of $\Delta'$ intersecting no orbit in a positive dimensional variety.
\end{proof}
\noindent Over $\mathbf{k}=\CC$ we may take instead of $\Delta$ a small Euclidean ball of $\Delta$ and get a one-to-one complex analytic parametrization by a ball in an affine space of dimension equal to $\dim \Delta$.

\section{aCM sheaves of rank $3/2$}
Let us consider the case of an aCM sheaf $\Ee$ on the double plane $X:=X_2[2]$ of rank of $3/2$. We know that $\Ee$ fits in the short exact sequence (\ref{eqerr3}) with $\Ll$ an aCM sheaf and $A$ being one the possible subschemes from the list \ref{poss}. Since we are interested only in indecomposable sheaves, by Remark \ref{trivialities}, we can exclude cases (a) and (f). The following Lemma also allows us to exclude case (d):

\begin{lemma}
There is no aCM sheaf of rank $3/2$ on $X$ fitting on the sequence  (\ref{eqerr3}):
$$
0 \to \Ll \to \Ee \stackrel{\pi}{\to} \Ii _A(1) \to 0
$$
\noindent for $A$ a connected scheme of degree two with $A_{\mathrm{red}}=\{p\}$ a point.
\end{lemma}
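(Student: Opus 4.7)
The plan is to argue by contradiction. Suppose such an aCM sheaf $\Ee$ of rank $3/2$ exists. Since $\Ee$ is minimally regular, we have both $H^1(\Ee(-2))=0$ (from the aCM hypothesis) and $H^2(\Ee(-2))=0$ (from $0$-regularity). I will combine these vanishings with a cohomology computation on the twist $\Ee(-2)$ to derive a numerical contradiction.

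The first step will be to identify the kernel $\Ll$. Additivity of rank in (\ref{eqerr3}) gives $\mathrm{rank}(\Ll)=1/2$, so by Lemma \ref{pop} it will be enough to show that $\Ll$ has pure depth $2$. The key local observation is that $\Ii_A(1)$ has depth only $1$ at $p$: since $\langle A\rangle$ is transverse to $H$, the two linear forms cutting $\langle A\rangle$ together with $w$ form a regular sequence in $\Oo_{\PP^3,p}$, so they form a regular sequence in $\Oo_{X,p}$ after modding out by $w^2$. The resulting Koszul complex is a free resolution of $\Ii_{A,p}$ of length one, whence $\mathrm{pd}_{\Oo_{X,p}}(\Ii_{A,p})=1$ and $\mathrm{depth}(\Ii_{A,p})=1$ by Auslander--Buchsbaum. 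Applying the depth lemma to the local sequence at $p$ together with $\mathrm{depth}(\Ee_p)=2$ gives $\mathrm{depth}(\Ll_p)\geq 2$, and away from $p$ the sheaf $\Ii_A(1)$ is already locally isomorphic to $\Oo_X(1)$, so $\Ll$ has pure depth $2$. Lemma \ref{pop} then yields $\Ll\cong\Oo_H(a)$ for some $a\in\ZZ$.

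The second step is the cohomological computation. From the sequence $0\to\Ii_A\to\Oo_X\to\Oo_A\to 0$ twisted by $-1$, using $h^0(\Oo_X(-1))=h^1(\Oo_X(-1))=h^2(\Oo_X(-1))=0$ and $\dim_{\mathbf{k}}\Oo_A=2$, I read off $h^1(\Ii_A(-1))=2$ and $h^2(\Ii_A(-1))=0$. Twisting the main exact sequence by $-2$ and taking cohomology,
\[
H^1(\Ee(-2))\to H^1(\Ii_A(-1))\to H^2(\Oo_H(a-2))\to H^2(\Ee(-2))\to H^2(\Ii_A(-1)),
\]
and using the two vanishings on $\Ee(-2)$, I obtain an isomorphism $H^1(\Ii_A(-1))\cong H^2(\Oo_H(a-2))$. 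This forces $h^2(\Oo_H(a-2))=2$.

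The contradiction is then immediate: by Serre duality on $H\cong\PP^2$ one has $h^2(\Oo_H(a-2))=h^0(\Oo_H(-a-1))=\binom{-a+1}{2}$ for $a\leq -1$ and zero otherwise, so the possible values are $0,1,3,6,10,\ldots$, never $2$. The main obstacle is the first step, namely the identification $\Ll\cong\Oo_H(a)$, since $\Ll$ is only controlled a priori through its rank; once the local depth analysis at $p$ is pushed through, Lemma \ref{pop} applies and the remainder reduces to a Hilbert-function mismatch that exploits precisely the fact that the non-reduced structure of $A$ produces a two-dimensional $H^1(\Ii_A(-1))$ while $h^2$ of line bundles on $\PP^2$ takes only triangular values.
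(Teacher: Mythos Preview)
Your proof is correct and follows the same overall arc as the paper's: identify $\Ll\cong\Oo_H(a)$ and then derive a cohomological mismatch from a twist of the sequence. Two differences are worth noting. First, the paper obtains $\Ll\cong\Oo_H(a)$ by invoking Lemma~\ref{kernelacm}(iii), which shows $\Ll$ is aCM via a regularity argument, whereas you run a direct local depth computation at $p$ using the Koszul resolution of $\Ii_{A,p}$ and Auslander--Buchsbaum; this is more hands-on but bypasses the regularity machinery. Second, the paper first twists by $-1$ to pin down $a=-2$ (from $h^1(\Ii_A)=1=h^2(\Oo_H(a-1))$) and only then twists by $-2$ to reach the contradiction $3=h^2(\Oo_H(-4))=h^1(\Ii_A(-1))=2$; you skip the intermediate step and go straight to the $(-2)$-twist, observing that $h^2(\Oo_H(a-2))$ takes only triangular values and so can never equal $2$. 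Your shortcut is a bit cleaner; the paper's two-step version has the minor advantage of exhibiting the offending value $a=-2$ explicitly.
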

\begin{proof}
$\Ll\cong\ker(\pi)$ is aCM by Lemma \ref{kernelacm}. So it is isomorphic to $\Oo_H(a)$ for some $a\in \ZZ$. From the exact sequence
$$0=H^1(\Ff(-1)) \to H^1(\Ii_A)\cong \mathbf{k} \to H^2(\Oo_H(a-1))\to H^2(\Ff(-1))=0,$$
we get $a=-2$. Since $\Ff$ is aCM with $h^2(\Ff(-2))=0$, we get $h^1(\Ii_A(-1))=h^2(\Oo_H(-4))=3$. But since $A$ is a zero-dimensional subscheme of length two, we have $h^1(\Ii_A(-1))=2$, a contradiction.
\end{proof}

It will also be easy to deal with case (c):

\begin{lemma}
  Any non-trivial rank $3/2$ sheaf $\Ee$ on $X$ fitting into the sequence (\ref{eqerr3}) for $A$ as in (c) of Possible Cases \ref{poss}, is a layered sheaf. In particular, it also has a presentation as in (b) of Possible Cases \ref{poss}.
\end{lemma}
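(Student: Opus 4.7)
The plan is to combine Lemma \ref{kernelacm}(ii), Lemma \ref{pop}, and the concrete description of $\Ii_L(1)$ coming from Example \ref{ex1} in order to build a three-step filtration of $\Ee$ whose successive quotients are twists of $\Oo_H$, thus showing $\Ee$ is layered; the second assertion will then come for free by composing with a quotient map.

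First I would use that $A = L$ has positive dimension to apply Lemma \ref{kernelacm}(ii), which ensures that $\Ll := \ker(\pi)$ is aCM. Since $\Ii_L(1)$ fits into the standard sequence $0 \to \Ii_L(1) \to \Oo_X(1) \to \Oo_L(1) \to 0$ with $\Oo_L(1)$ supported in dimension one, its Hilbert polynomial has the same leading term as $\Oo_X(1)$, so $\mathrm{rank}(\Ii_L(1)) = 1$ and hence $\mathrm{rank}(\Ll) = 1/2$. Because aCM sheaves on $X$ are locally Cohen-Macaulay and thus of pure depth $2$ by Remark \ref{rremm}(i), Lemma \ref{pop} forces $\Ll \cong \Oo_H(a)$ for some integer $a$.

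Next I would specialize the extension (\ref{eqbbb1}) of Example \ref{ex1} to $C = L$ (so $d = 1$) and twist by $\Oo_X(1)$ to obtain
\begin{equation*}
0 \to \Oo_H \to \Ii_L(1) \to \Oo_H \to 0.
\end{equation*}
Pulling the subsheaf $\Oo_H \subset \Ii_L(1)$ back through $\pi$ produces an intermediate subsheaf $\Ee_2 \subset \Ee$; setting $\Ee_1 := \Ll$ and $\Ee_3 := \Ee$ yields a filtration $0 = \Ee_0 \subset \Ee_1 \subset \Ee_2 \subset \Ee_3 = \Ee$ with successive quotients $\Oo_H(a)$, $\Oo_H$, $\Oo_H$. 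This exhibits $\Ee$ as layered in the sense of Definition \ref{llay}.

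For the final clause, I would compose $\pi : \Ee \to \Ii_L(1)$ with the quotient $\Ii_L(1) \to \Oo_H$ from the sequence above to obtain a surjection $\Ee \to \Oo_H$. Since $H$ is cut out in $X$ by a single linear equation, $\Oo_H \cong \Ii_H(1)$ as $\Oo_X$-sheaves, so this surjection provides exactly a presentation of $\Ee$ of the form (\ref{eqerr3}) with $A = H$, which is case (b) of Possible Cases \ref{poss}. I anticipate no real obstacle here: both Lemma \ref{pop} and Example \ref{ex1} supply precisely what is needed, and additivity of the rank handles the bookkeeping.
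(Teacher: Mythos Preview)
Your proposal is correct and follows essentially the same approach as the paper. The paper organizes the argument as a commutative diagram, defining $\Gg$ as the preimage in $\Ee$ of $\Ii_H(1)\cong\Oo_H\subset\Ii_L(1)$ (which is exactly your $\Ee_2$), and then reads off the vertical sequence $0\to\Gg\to\Ee\to\Oo_H\to 0$ as the case~(b) presentation; your pullback construction and the composed surjection $\Ee\to\Ii_L(1)\to\Oo_H$ are the same maps written out in words.
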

\begin{proof}
If $\Ee$ is such a sheaf, by Lemma \ref{kernelacm}, $\Ll$ is an aCM $0$-regular sheaf of rank $1/2$ and therefore $\Ll\cong\Oo_H(a)$ with $a\geq 0$. The central vertical exact sequence from the following diagram
   \begin{equation}
    \label{diag0}
    \xymatrix@-2ex{
    & & 0 \ar[d] & &\\
    0 \ar[r] & \Oo_H(a) \ar[r]\ar@{=}[d] & \Gg \ar[r]\ar[d] & \Ii_H(1)\cong\Oo_H \ar[r] \ar[d] & 0\\
    0 \ar[r] &  \Oo_H(a)  \ar[r] &  \Ee  \ar[d] \ar[r] &  \Ii_L(1) \ar@{=}[d] \ar[r] & 0\\
               &       & \Oo_H \ar@{=}[r] \ar[d] & \Ii_{L,H}(1)    &  \\
               &        & 0 &   &
    }
  \end{equation}
  \noindent shows that $\Ee$ also fits in (b) of Possible Cases \ref{poss} with $\Gg$ a rank one aCM sheaf. Thus we get the statement.
  \end{proof}

Therefore, the rest of the section will be devoted to study cases (b) and (e).

\begin{lemma}
We have $\Ext_{\PP^{3}}^1(\Oo_X, \Oo_H(a))\cong H^0(\Oo_H(a+2))$ for $a\ge 0$.
\end{lemma}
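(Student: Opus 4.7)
The plan is to resolve $\Oo_X$ as an $\Oo_{\PP^3}$-module by the short exact sequence
\begin{equation}\label{planresol}
0 \to \Oo_{\PP^3}(-2) \xrightarrow{\cdot w^2} \Oo_{\PP^3} \to \Oo_X \to 0,
\end{equation}
which is available because $X$ is cut out by $w^2$. Applying the functor $\Hom_{\PP^3}(-,\Oo_H(a))$ to \eqref{planresol} yields a long exact sequence, from which I will read off the claimed isomorphism.

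Concretely, the relevant portion of the long exact sequence is
$$\Hom_{\PP^3}(\Oo_{\PP^3}, \Oo_H(a)) \xrightarrow{\,\cdot w^2\,} \Hom_{\PP^3}(\Oo_{\PP^3}(-2), \Oo_H(a)) \to \Ext_{\PP^3}^1(\Oo_X, \Oo_H(a)) \to \Ext_{\PP^3}^1(\Oo_{\PP^3}, \Oo_H(a)).$$
The first two terms identify with $H^0(\Oo_H(a))$ and $H^0(\Oo_H(a+2))$ respectively, the last term equals $H^1(\Oo_H(a))$, and the connecting arrow is multiplication by $w^2$. Since $\Oo_H \cong \Oo_{\PP^2}$, we have $H^1(\Oo_H(a)) = 0$ for every $a \in \ZZ$. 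Moreover, the section $w$ restricts to zero on $H$, so multiplication by $w^2$ acts as the zero map on $\Oo_H(a)$. Hence the first arrow vanishes and the second one becomes an isomorphism, giving
$$\Ext_{\PP^3}^1(\Oo_X, \Oo_H(a)) \cong H^0(\Oo_H(a+2)),$$
as required.

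The only subtle point is the identification of the connecting map with multiplication by $w^2$, which follows immediately from the functoriality of $\Hom$ applied to the map $\Oo_{\PP^3}(-2) \xrightarrow{\cdot w^2} \Oo_{\PP^3}$ in \eqref{planresol}; everything else is a direct computation of cohomology on $\PP^3$ using that $\Hom_{\PP^3}(\Oo_{\PP^3}(k), \Oo_H(a)) = H^0(\Oo_H(a-k))$ and $\Ext_{\PP^3}^1(\Oo_{\PP^3}(k), \Oo_H(a)) = H^1(\Oo_H(a-k))$. I expect no real obstacle in executing this plan; the hypothesis $a \geq 0$ is not strictly needed for the isomorphism itself, but reflects the range in which the paper will apply the lemma.
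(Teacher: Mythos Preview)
Your argument is correct and is genuinely different from the paper's proof. You resolve $\Oo_X$ by line bundles on $\PP^3$ via the defining sequence $0\to\Oo_{\PP^3}(-2)\to\Oo_{\PP^3}\to\Oo_X\to 0$, apply $\Hom_{\PP^3}(-,\Oo_H(a))$, observe that the induced map $H^0(\Oo_H(a))\to H^0(\Oo_H(a+2))$ is multiplication by $w^2$ and hence zero since $w$ vanishes on $H$, and use $H^1(\Oo_H(a))=0$ for all $a$ (as $H\cong\PP^2$) to conclude. This is entirely elementary and, as you note, does not actually require $a\ge 0$.

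The paper instead invokes a change-of-rings lemma from \cite{CCM} for the inclusion $H\subset\PP^3$, which reduces the Ext group to $\Hom_H(\mathcal{T}or_1^{\PP^3}(\Oo_X,\Oo_H),\Oo_H(a))$; it then computes $\mathcal{T}or_1^{\PP^3}(\Oo_X,\Oo_H)\cong\Oo_H(-2)$ by tensoring the sequence $0\to\Oo_H(-1)\to\Oo_X\to\Oo_H\to 0$ with $\Oo_H$. This route needs the vanishing $\Ext_H^2(\Oo_H,\Oo_H(a))\cong H^0(\Oo_H(-a-3))^\vee=0$, which is where the hypothesis $a\ge 0$ enters. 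Your approach sidesteps both the external reference and the Tor computation, and yields the isomorphism for all $a\in\ZZ$; the paper's approach, while slightly heavier, makes the Tor sheaf explicit, which can be useful in other local-to-global arguments. One small terminological point: the map you analyze is not the connecting homomorphism but the map induced directly by precomposition with $w^2$; this does not affect the argument.
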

\begin{proof}
Applying \cite[Lemma 13 in \S 4]{CCM} to $H\subset \PP^3$ with a pair $(\Ff, \Gg)=(\Oo_X, \Oo_H(a))$, we get
$$\Ext_{\PP^3}^1(\Oo_X, \Oo_H(a)) \cong \mathrm{Hom}_H(\mathcal{T}or_1^{\PP^3}(\Oo_X, \Oo_H), \Oo_H(a)),$$
because we have $\Ext_H^1(\Oo_H, \Oo_H(a))=0$ and $\Ext_H^2(\Oo_H, \Oo_H(a))\cong H^0(\Oo_H(-a-3))^\vee=0$. By tensoring (\ref{eqa1}) with $\Oo_H$, we get $\mathcal{T}or_1^{\PP^3}(\Oo_X, \Oo_H) \cong \mathcal{T}or_1^{\PP^3}(\Oo_H(-1), \Oo_H) \cong \Oo_H(-2)$. Thus we get $\Ext_{\PP^3}^1(\Oo_X, \Oo_H(a))\cong H^0(\Oo_H(a+2))$.
\end{proof}

\begin{remark}\label{x+1}
Although we have a non-trivial extension of $\Oo_H(a)$ by $\Oo_X$ as $\Oo_{\PP^3}$-sheaves for $a\ge 0$, it is not an $\Oo_X$-sheaf, because we have $\Ext_{X}^1(\Oo_X,\Oo _H(a)) \cong H^1(\Oo_H(a))= 0$ for all $a\in \ZZ$ by \cite[Proposition III.6.3]{Hartshorne}.
\end{remark}

\begin{proposition}
If $\Ee$ is a simple layered aCM sheaf on $X$, then it is either $\Oo_H (a)$ or $\Oo_X(b)$ for some $a, b\in \ZZ$.
\end{proposition}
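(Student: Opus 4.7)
My plan splits the argument by the rank $r\in (1/2)\NN$ of $\Ee$. For $r=1/2$, Definition \ref{llay} forces the layered filtration to have length one, so $\Ee\cong\Oo_H(a_1)$ for some $a_1\in\ZZ$; this is visibly simple, settling the case.

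For $r=1$, I would apply Theorem \ref{aprop2}, which partitions the rank-one aCM sheaves on $X$ into three families: (a) line bundles $\Oo_X(b)$; (b) direct sums of two aCM sheaves of rank $1/2$; and (c) twists of ideal sheaves $\Ii_C$ of plane curves $C\subset H$. Family (b) is patently non-simple. For family (c), setting $d:=\deg C\ge 1$ and using the short exact sequence $0\to\Oo_H(t-1)\to\Ii_C(t)\to\Oo_H(t-d)\to 0$ from Example \ref{ex1}, any nonzero $s\in H^0(\Oo_H(d-1))$ produces the composite $\Ii_C(t)\twoheadrightarrow\Oo_H(t-d)\xrightarrow{s}\Oo_H(t-1)\hookrightarrow\Ii_C(t)$, whose image sits in the proper subsheaf $\Oo_H(t-1)$ of rank $1/2$ and is therefore a non-scalar endomorphism. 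Only the line bundle case $\Oo_X(b)$ survives the simplicity hypothesis.

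The substance of the proof lies in $r\ge 3/2$, where I claim no simple layered aCM sheaf exists. The guiding principle is: given the filtration $0=\Ee_0\subset\cdots\subset\Ee_{2r}=\Ee$ with $\Ee_i/\Ee_{i-1}\cong\Oo_H(a_i)$, any nonzero $h\in\Hom_X(\Ee/\Ee_j,\Ee_i)$ with $i<2r$ yields a non-scalar endomorphism of $\Ee$ via $\Ee\twoheadrightarrow\Ee/\Ee_j\xrightarrow{h}\Ee_i\hookrightarrow\Ee$, whose image lies in the proper subsheaf $\Ee_i$. For $r=3/2$ I would invoke Theorem \ref{iii}(2) and check its three types: in type (i) all $a_i=0$, so the identity in $\Hom(\Oo_H,\Oo_H)$ directly furnishes such an endomorphism; in type (ii), the composite $\Ee\twoheadrightarrow\Oo_H\to\Oo_H(a-1)\hookrightarrow\Ii_C(a)\hookrightarrow\Ee$ works whenever $a\ge 1$, while the residual sub-case $a=d=0$ collapses to $0\to\Oo_X\to\Ee\to\Oo_H\to 0$, which splits by the vanishing $\Ext_X^1(\Oo_H,\Oo_X)=0$ read off from the $2$-periodic $\Oo_X$-free resolution $\cdots\xrightarrow{w}\Oo_X(-1)\xrightarrow{w}\Oo_X\twoheadrightarrow\Oo_H$; in type (iii), the hypothesis $b\ge 0$ gives $\Hom(\Oo_H,\Oo_H(b))\ne 0$ and the same kind of composite applies.

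The main obstacle is the extension of this argument to $r\ge 2$: one must show that the filtration still forces a non-scalar endomorphism even when the "first versus last" inequality $a_1\ge a_{2r}$ fails. My strategy would be to locate a rank $3/2$ subquotient $\Ee_{j+2}/\Ee_{j-1}$ of $\Ee$, apply the $r=3/2$ analysis there, and transfer the resulting non-scalar endomorphism back to $\Ee$; the transfer requires controlling the $\Ext^1$ obstructions to lifting along the remaining layers, which one manages through Lemma \ref{ttt} together with the vanishing $H^1(\Oo_H(\bullet))=0$ for lifts of sections layer by layer.
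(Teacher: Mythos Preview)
Your treatment of $r\le 1$ matches the paper's. For $r=3/2$ you take a different route, invoking Theorem~\ref{iii}(2) instead of arguing directly; this is logically sound since Theorem~\ref{iii}'s proof does not use the present Proposition, so there is no circularity.

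The genuine gap is in $r\ge 2$. Your ``guiding principle'' produces a non-scalar endomorphism of $\Ee$ from a nonzero map $\Ee/\Ee_j\to\Ee_i$ with $i<2r$, i.e.\ from a \emph{quotient} of $\Ee$ to a \emph{subsheaf} of $\Ee$. But the $r=3/2$ analysis applied to a subquotient $\Ee_{j+2}/\Ee_{j-1}$ only yields a nonzero map $\Ee_{j+2}/\Ee_k\to\Ee_l/\Ee_{j-1}$ for some $j-1\le l<k\le j+2$; to promote this to a map $\Ee/\Ee_k\to\Ee_l$ you must (a) extend along $0\to\Ee_{j+2}/\Ee_k\to\Ee/\Ee_k\to\Ee/\Ee_{j+2}\to 0$ and (b) lift through $0\to\Ee_{j-1}\to\Ee_l\to\Ee_l/\Ee_{j-1}\to 0$. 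Both steps are obstructed by connecting maps into $\Ext^1$ groups that are typically nonzero: by Lemma~\ref{ttt} (case $m=1$) one has $\Ext^1_X(\Oo_H(a),\Oo_H(b))\cong H^0(\Oo_H(b-a+1))$, which does not vanish when $b\ge a-1$. The vanishing $H^1(\Oo_H(\bullet))=0$ controls sections, not these extension obstructions, so your appeal to it does not close the gap. Concretely, take $a_1=a_2=a_3=0$ and $a_4$ arbitrary with the extension $0\to\Oo_H\to\Ee_4/\Ee_2\to\Oo_H(a_4)\to 0$ nontrivial: then the identity on $\Ee_3/\Ee_2\cong\Oo_H$ does not extend to a map $\Ee_4/\Ee_2\to\Oo_H$, and your transfer fails.

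The paper avoids this entirely by a uniform argument for all $r>1$. After twisting so that $\Ee$ is minimally regular, one has a nonzero $u\colon\Ee\to\Oo_X(1)$ with image $\Ii_A(1)$. If $a_1>0$, the composite $\Ee\to\Ii_A(1)\hookrightarrow\Oo_X(1)\twoheadrightarrow\Oo_H(1)\to\Oo_H(a_1)\hookrightarrow\Ee$ contradicts simplicity, so $a_1\le 0$. If some $a_j>0$ with $j\ge 2$, a reordering Claim (swapping adjacent layers when the extension splits, and recognising $\Oo_X(1)$ when it does not) produces a new filtration with either $\Ff_1\cong\Oo_H(a_s)$ for $a_s>0$ or $\Ff_2\cong\Oo_X(1)$, and in either case one again factors $u$ through an early term of the filtration to contradict simplicity. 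If all $a_j\le 0$, global generation forces $a_{2r}=0$; then either $a_1=0$ gives a nonzero map $\Ee/\Ee_{2r-1}\to\Ee_1$ directly, or $a_1<0$ forces $h^2(\Ee(-2))>0$, contradicting $0$-regularity. No lifting through intermediate $\Ext^1$ groups is required.
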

\begin{proof}
Let $r\in {\left(\frac{1}{2}\right)}{\ZZ}$ be the rank of $\Ee$. The result is trivial for $r=1/2$. By Theorem \ref{aprop2} it is sufficient to prove that $r\le 1$ (since the ideal sheaf $\Ii_C$ of a plane curve $C\subset H$ is not simple  as it can be easily deduced composing the maps from the exact sequence \ref{eqbbb1} with any non zero morphism $\Oo_H(-d) \to \Oo_H(-1)$).
\\
So assume $r>1$ and fix a filtration with  $0 =\Ee _0\subset \Ee _1\subset \cdots \subset \Ee _{2r-1} \subset \Ee _{2r} =\Ee$ of $\Ee$ with $\Ee _i/\Ee _{i-1} \cong \Oo _H(a_i)$ with $a_i\in \ZZ$ for $i=1,\dots ,2r$. We may assume that $\Ee$ is minimally regular and therefore there is a non-zero map $u: \Ee \rightarrow \Oo _X(1)$. Set $\Ii _A(1):= \mathrm{Im}(u)$. If $a_1>0$, then composing the inclusion $\Ii _A(1) \subseteq \Oo _X(1)$ with the surjection $\Oo _X(1)\rightarrow \Oo _H(1)$ and a non-zero map $\Oo _H(1)\rightarrow \Oo _H(a_1) \subset \Ee$, would imply that $\Ee$ is not simple. Thus we get $a_1\le 0$.

\quad (a) Assume for the moment $a_j>0$ for some $j\ge 2$ and define $s$ to be the minimum among these integers.

\quad {\emph{Claim}}: There is another layering filtration $0 =\Ff _0\subset \Ff _1\subset \Ff _2\subset \cdots \Ff _{2r} =\Ee$ with either (i) $\Ff_1 \cong \Oo _H(a_s)$ or (ii) $a_s=1$, $a_j=0$ for some $j<s$
and $\Ff _2 \cong \Oo _X(1)$.

\quad {\emph{Proof of Claim}}: We use induction on $s$. By assumption $\Ee _s/\Ee _{s-2}$ is an extension of $\Oo _H(a_s)$ by $\Oo _H(a_{s-1})$. If this extension splits, then we may find another layering filtration $0=\Gg_0 \subset \Gg_1 \subset \cdots \subset \Gg_{2r-1} \subset \Gg_{2r}=\Ee$ of $\Ee$ such that
\begin{itemize}
\item $\Gg _i =\Ee _i$ if $i\notin \{s-1,s\}$,
\item $\Gg_{s-1}/\Gg _{s-2} \cong \Oo _H(a_s)$, and
\item $\Gg _s/\Gg _{s-1} \cong \Oo _H(a_{s-1})$.
\end{itemize}
If $s=2$, then we may take $\Ff _i=\Gg _i$ for all $i$. If $s>2$, then we use the inductive hypothesis. In other words, we set $\Gg_{s-1}$ the kernel of the map $\Ee_s \rightarrow \Oo_H(a_{s-1})$ so that $\Gg_{s-1}/\Ee_{s-2} \cong \Oo_H(a_s)$. Actually, in this way only one sheaf in the filtration changes, namely in degree $s-1$, while two maps change; the ones having source and target in degree $s-1$. Now assume that $\Ee _s/\Ee _{s-2}$ is a non-trivial extension of $\Oo _H(a_s)$ by $\Oo _H(a_{s-1})$. Since $a_s>0$ and $a_{s-1} \le 0$, Lemma \ref{ttt} for $X$ and Example \ref{ex1} give $a_s=1$, $a_{s-1}=0$ and $\Ee _s/\Ee _{s-2} \cong \Oo _X(1)$. If $s=2$, then Claim is proved. Now assume
$s>2$. Since $a_j \le 0$ for all $j<0$, we may apply $s-2$ times the twist by $-1$ of Lemma \ref{x+1} to get a new filtration $\Ff _i$ such that $\Ff _2 \cong \Oo _X(1)$, $\Ff _j = \Ee _j$ for all $j\ge s$, and $\Ff _i/\Ff _{i-1} \cong\Oo_H( a_{i-2})$ for $i=2,\dots ,s$.

By Claim we get either a non-zero map $\Ii _A(1) \rightarrow \Ff _1$ or a non-zero map $\Ii _A(1) \rightarrow \Ff _2$. So by composing with $u$ we get that $\Ee$ is not simple, a contradiction.

\quad (b) Assume $a_j\le 0$ for all $j$. Since $\Ee$ is $0$-regular, it is globally generated. In particular, $\Ee /\Ee _{2r-1}$ is globally generated, i.e. $a_{2r}\ge 0$. Our assumption gives $a_{2r}=0$. If $a_1\ge 0$, then we get a non-zero map $\Ee /\Ee _{2r-1} \rightarrow \Ee _1$, which implies that $\Ee$ is not simple. If $a_1<0$, then we get $h^2(\Oo _H(a_1-2)) >0$. This implies $h^2(\Ee(-2))>0$ since $\Ee/\Aa$ is aCM. But it contradicts the $0$-regularity of $\Ee$.
\end{proof}

\begin{lemma}\label{tty}
For a plane curve $C$ of degree $d$ in $X$, we have
\begin{itemize}
\item $\mathrm{ext}_{\PP^3}^1(\Oo_H(a), \Ii_C)={2-a \choose 2} + \max \{2-a-d,0\}$;
\item $\mathrm{ext}_X^1(\Oo_H(a), \Ii_C)\ge {2-a \choose 2}-{2-a-d \choose 2}$,
\end{itemize}
where ${n \choose 2}$ is zero for $n\le 1$.
\end{lemma}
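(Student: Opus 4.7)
My plan is to apply $\Hom_{\PP^3}(\Oo_H(a),-)$ and $\Hom_X(\Oo_H(a),-)$ to the short exact sequence (\ref{eqbbb1})
$$0\to \Oo_H(-1)\to \Ii_C\to \Oo_H(-d)\to 0$$
and then read off the dimensions from the resulting long exact sequences, once the relevant $\Ext$ groups between twists of $\Oo_H$ and the connecting homomorphisms have been identified.

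The first preliminary computation I would make is $\Ext^i_{\PP^3}(\Oo_H(a),\Oo_H(b))$. This is done from the Koszul resolution $0\to \Oo_{\PP^3}(-1)\to \Oo_{\PP^3}\to \Oo_H\to 0$: since multiplication by the defining equation $w$ of $H$ vanishes on $\Oo_H$, the complex obtained by applying $\Hom_{\PP^3}(-,\Oo_H(b))$ has zero differentials, giving $\Hom_{\PP^3}(\Oo_H(a),\Oo_H(b))\cong H^0(\Oo_H(b-a))$ and $\Ext^1_{\PP^3}(\Oo_H(a),\Oo_H(b))\cong H^0(\Oo_H(b-a+1))$, and $\Ext^i_{\PP^3}=0$ for $i\ge 2$ (this last also consistent with Lemma \ref{ttt} taken with $m=1$). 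For Ext over $X$ I would use the $2$-periodic free resolution $\cdots\to \Oo_X(a-2)\xrightarrow{w}\Oo_X(a-1)\xrightarrow{w}\Oo_X(a)\to \Oo_H(a)\to 0$ (whose exactness follows from $\mathrm{Ann}_{\Oo_X}(w)=(w)$), and again the differentials induced on $\Hom_X(-,\Oo_H(b))$ vanish since $w\cdot \Oo_H=0$, giving $\Ext^i_X(\Oo_H(a),\Oo_H(b))\cong H^0(\Oo_H(b-a+i))$ for every $i\ge 0$.

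Next I would identify the connecting maps. The extension class $\xi$ of (\ref{eqbbb1}) lives in $\Ext^1(\Oo_H(-d),\Oo_H(-1))\cong H^0(\Oo_H(d))$ and, as observed in Example \ref{ex1}, corresponds to the defining equation $f_C\in H^0(\Oo_H(d))$ of $C$. Yoneda composition with $\xi$ then gives, on the $H^0$ level, the map $\delta^i:H^0(\Oo_H(-d-a+i))\to H^0(\Oo_H(-a+i))$ of multiplication by $f_C$, which is injective since $f_C$ is a nonzero section on the integral scheme $H$. This applies both to the long exact sequence in $\PP^3$ (where $i=0$) and to the long exact sequence over $X$ (where both $i=0$ and $i=1$ occur).

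Putting these ingredients into the long exact sequence in $\PP^3$ yields $\mathrm{ext}^1_{\PP^3}(\Oo_H(a),\Ii_C)=h^0(\Oo_H(-a))-h^0(\Oo_H(-a-d))+h^0(\Oo_H(1-a-d))$, and the identity $h^0(\Oo_H(1-a-d))-h^0(\Oo_H(-a-d))=\max\{2-a-d,0\}$ (a direct Pascal computation distinguishing the cases $a+d\le 1$ and $a+d\ge 2$) gives the first claim. For the second claim, the long exact sequence over $X$ reads
$$0\to \mathrm{coker}(\delta^0)\to \Ext^1_X(\Oo_H(a),\Ii_C)\to \ker(\delta^1)\to 0,$$
so that $\mathrm{ext}^1_X(\Oo_H(a),\Ii_C)\ge \dim\mathrm{coker}(\delta^0)=h^0(\Oo_H(-a))-h^0(\Oo_H(-a-d))=\binom{2-a}{2}-\binom{2-a-d}{2}$, as desired. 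The main step requiring care is the identification of the connecting maps as multiplication by $f_C$; once this is in place everything reduces to binomial bookkeeping.
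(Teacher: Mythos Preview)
Your approach is essentially the same as the paper's: apply $\Hom(\Oo_H(a),-)$ to (\ref{eqbbb1}) and read off the dimensions. One difference is that you identify the connecting homomorphism explicitly as multiplication by $f_C$, whereas the paper instead verifies directly that $\Hom_{\PP^3}(\Oo_H(a),\Oo_H(-1))\to \Hom_{\PP^3}(\Oo_H(a),\Ii_C)$ is an isomorphism (using that every map $\Oo_H(a)\to \Ii_C$ factors through $\Oo_H(-1)$). Your route is cleaner once the Yoneda-product identification is in place.

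There is, however, a genuine misstatement in your preliminary computation. From the two-term resolution $0\to \Oo_{\PP^3}(a-1)\to \Oo_{\PP^3}(a)\to \Oo_H(a)\to 0$ you cannot conclude $\Ext^i_{\PP^3}(\Oo_H(a),\Oo_H(b))=0$ for $i\ge 2$: the global $\Hom$-complex does not compute global $\Ext$ directly, since $\Ext^q_{\PP^3}(\Oo_{\PP^3}(c),\Oo_H(b))\cong H^q(\Oo_H(b-c))$ need not vanish. The hypercohomology spectral sequence (which degenerates here because the differentials are multiplication by $w$) actually gives $\Ext^2_{\PP^3}(\Oo_H(a),\Oo_H(b))\cong H^2(\Oo_H(b-a))$, which is nonzero for $b-a\le -3$. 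The same caveat applies to your formula $\Ext^i_X(\Oo_H(a),\Oo_H(b))\cong H^0(\Oo_H(b-a+i))$: it is correct for $i=0,1$ (since $H^1$ of any line bundle on $H\cong\PP^2$ vanishes) but picks up an extra $H^2(\Oo_H(b-a))$ summand for $i=2$.

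Fortunately this error is harmless for your argument. In the range $a\ge 2$ where $\Ext^2_{\PP^3}(\Oo_H(a),\Oo_H(-1))$ is nonzero, the preceding term $\Ext^1_{\PP^3}(\Oo_H(a),\Oo_H(-d))\cong H^0(\Oo_H(1-a-d))$ already vanishes (since $d\ge 1$), so your displayed formula for $\mathrm{ext}^1_{\PP^3}(\Oo_H(a),\Ii_C)$ remains valid. For the $\Ext_X$ bound you only use $i=0,1$, where your identifications are correct. The paper handles the $\Ext^2$ term by computing it via Serre duality (obtaining $H^0(\Oo_H(a-2))^\vee$, which vanishes for $a\le 0$) and treating the case $a\ge 1$ separately; you should patch your argument along one of these two lines.
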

\begin{proof}
Recall that $\Ii_C$ is an extension of $\Oo_H(-d)$ by $\Oo_H(-1)$ with $d=\deg (C)$. Applying the functor $\Hom_{\PP^3}(\Oo_H(a), -)$ to the extension for $\Ii_C$, we get
\begin{equation}\label{yyy2}
\begin{split}
0&\to \Hom_{\PP^3}(\Oo_H(a), \Oo_H(-d)) \to \Ext_{\PP^3}^1(\Oo_H(a), \Oo_H(-1)) \to \Ext_{\PP^3}^1(\Oo_H(a), \Ii_C) \\
&\to \Ext_{\PP^3}^1(\Oo_H(a), \Oo_H(-d)) \to \Ext_{\PP^3}^2(\Oo_H(a), \Oo_H(-1)),
\end{split}
\end{equation}
because we have an isomorphism $\Hom_{\PP^3}(\Oo_H(a), \Oo_H(-1)) \cong \Hom_{\PP^3}(\Oo_H(a), \Ii_C)$, i.e. each morphism $\Oo_H(a) \rightarrow \Ii_C$ factors through $\Oo_H(-1)$. Indeed, we have an injection $\mathrm{Hom}_{\PP^3}(\Oo_H(a), \Oo_H(-1))\rightarrow \Hom_{\PP^3}(\Oo_H(a), \Ii_C)$ and it gives $\mathrm{hom}_{\PP^3}(\Oo_H(a), \Ii_C)\ge {1-a \choose 2}$. On the other hand, applying the functor $\Hom_{\PP^3}(\Oo_H(a), -)$ to (\ref{eq4}), we get the opposite directional inequality, because we have
$$\Hom_{\PP^3}(\Oo_H(a), \Oo_X) \cong \Hom_X(\Oo_H(a), \Oo_X) \cong H^2(\Oo_H(a-2))^\vee \cong H^0(\Oo_H(-1-a))$$
by Serre's duality.

If $a\ge 1$, then we get $a+d\ge 2$. This implies that $\Ext_{\PP^3}^1(\Oo_H(a), \Oo_H(-1)) \cong H^0(\Oo_H(-a))$ and $\Ext_{\PP^3}^1(\Oo_H(a), \Oo_H(-d)) \cong H^0(\Oo_H(1-a-d))$ are trivial by Lemma \ref{ttt}. Thus $\Ext_{\PP^3}^1(\Oo_H(a), \Ii_C)$ is trivial. Now assume $a\le 0$. By Serre's duality and Lemma \ref{ttt}, we get
$$\Ext_{\PP^3}^2(\Oo_H(a), \Oo_H(-1)) \cong \Ext_{\PP^3}^1(\Oo_H(-1), \Oo_H(a-4))^\vee\cong H^0(\Oo_H(a-2))^\vee,$$
which is trivial. Thus the sequence (\ref{yyy2}) becomes
\begin{equation}\label{yyy1}
\begin{split}
0&\to H^0(\Oo_H(-a-d))\to H^0(\Oo_H(-a))\\
& \to \Ext_{\PP^3}^1(\Oo_H(a), \Ii_C) \to H^0(\Oo_H(1-a-d))\to 0.
\end{split}
\end{equation}
If $a+d \ge 2$, then we get $H^0(\Oo_H(-a)) \cong \Ext_{\PP^3}^1(\Oo_H(a), \Ii_C)$, whose dimension is ${2-a \choose 2}$. If $a+d=1$, then similarly we get the dimension ${2-a \choose 2}+1$. Finally if $a+d\le 0$, then each term in (\ref{yyy1}) is non-zero. Thus we get
\begin{align*}
\dim \Ext_{\PP^3}^1(\Oo_H(a), \Ii_C)&=h^0(\Oo_H(-a))+h^0(\Oo_H(1-a-d))-h^0(\Oo_H(-a-d))\\
&=(a^2-5a-2d+6)/2={2-a \choose 2}+(2-a-d)
\end{align*}
and we get the assertion for $\Ext_{\PP^3}^1(\Oo_H(a), \Ii_C)$.

Now consider $\Ext_X^1(\Oo_H(a), \Ii_C)$. From (\ref{yyy2}) with $\PP^3$ replaced by $X$, we get the assertion, due to case $m=1$ of Lemma \ref{ttt}.
\end{proof}


Now recall that an extension of $\Oo_H$ by $\Oo_H$ is isomorphic to either $\Oo_H^{\oplus 2}$ or $\Ii_L(1)$ for a line $L\subset H$. We get that $\mathrm{hom}_X (\Oo _H,\Ii _L(1)) = \mathrm{hom}_X (\Ii _L(1),\Oo _H) =1$ for any line $L\subset H$, which is essentially equivalent to $\Ii _L(1)\not \cong  \Oo _H^{\oplus 2}$ by the proof of Lemma \ref{aaa1}.

\begin{lemma}\label{aaa1}
Let $\Ee$ be a sheaf of rank $3/2$ with the filtration
\begin{equation}\label{aa5}
0=\Ee_0\subset \Ee_1 \subset \Ee_2 \subset \Ee_3=\Ee
\end{equation}
such that $\Ee_i/\Ee_{i-1}\cong \Oo_H$ for all $i\in \{1,2,3\}$. Setting $e_L:= \mathrm{hom}_X(\Oo _H,\Ee )$ and $e_R:= \mathrm{hom}_X(\Ee ,\Oo _H)$, we have the following.
\begin{enumerate}
\item [(i)] $1\le e_L, e_R \le 3$.
\item [(ii)] $e_L=3$ $\Leftrightarrow$ $e_R=3$ $\Leftrightarrow$  $\Ee \cong \Oo _H^{\oplus 3}$.
\item [(iii)] $e_L=2$ (resp. $e_R=2$) if and only if $\Ee$ is an extension of $\Oo _H$ by $\Oo _H^{\oplus 2}$ (resp. $\Oo_H^{\oplus 2}$ by $\Oo_H$).
\item [(iv)] $e_L=e_R=1$ if and only if (\ref{aa5}) is the unique filtration of $\Ee$ with $\Ee _i/\Ee _{i-1}\cong \Oo_H$ for all $i$. In this case $\Ee _2\cong \Ii _L(1)$ for a line $L\subset H$ uniquely determined by $\Ee$.
\item [(v)] $e_L=e_R=2$ if and only if $\Ee \cong \Ii _L(1)\oplus \Oo _H$ for a line $L\subset H$.
\end{enumerate}
\end{lemma}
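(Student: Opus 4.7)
The plan is to analyze everything via the long exact sequences of $\Hom_X(\Oo_H,-)$ and $\Hom_X(-,\Oo_H)$ applied to the filtration \eqref{aa5}, using two basic facts: $\mathrm{hom}_X(\Oo_H,\Oo_H)=1$ by simplicity of $\Oo_H$ on $H=\PP^2$, and $\Ext_X^1(\Oo_H,\Oo_H)\cong H^0(\Oo_H(1))$ has dimension $3$ (via Lemma~\ref{ttt} with $m=1$, noting that $\Oo_X$-extensions of $\Oo_H$-sheaves coincide with $\Oo_{\PP^3}$-extensions by Remark~\ref{t1.1}). The nonzero classes in $\Ext_X^1(\Oo_H,\Oo_H)$ parametrize the sheaves $\Ii_L(1)$ for lines $L\subset H$, while the zero class gives $\Oo_H^{\oplus 2}$.

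For (i), the lower bounds come from the canonical maps $\Ee_1\hookrightarrow\Ee$ and $\Ee\twoheadrightarrow\Ee/\Ee_2$. Setting $a_i:=\mathrm{hom}_X(\Oo_H,\Ee_i)$, each short exact sequence $0\to\Ee_{i-1}\to\Ee_i\to\Oo_H\to 0$ gives $a_{i-1}\le a_i\le a_{i-1}+1$, so $e_L=a_3\le 3$, and dually for $e_R$. For (ii), $e_L=3$ forces $a_i=a_{i-1}+1$ at each stage, equivalently the connecting map $\Hom_X(\Oo_H,\Oo_H)\to\Ext_X^1(\Oo_H,\Ee_{i-1})$ vanishes, so both extensions in \eqref{aa5} split, yielding $\Ee\cong\Oo_H^{\oplus 3}$ and $e_R=3$; the converse is trivial.

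For (iii), the value $e_L=2$ leaves only the patterns $(a_1,a_2,a_3)=(1,1,2)$ or $(1,2,2)$. The pattern $(1,2,2)$ means $\Ee_2\cong\Oo_H^{\oplus 2}$ with the outer extension non-split, exhibiting $\Ee$ directly as an extension of $\Oo_H$ by $\Oo_H^{\oplus 2}$. The pattern $(1,1,2)$ means $\Ee_2\cong\Ii_L(1)$ together with a splitting of the outer extension, hence $\Ee\cong\Ii_L(1)\oplus\Oo_H$; since $\Ii_L(1)$ itself fits in $0\to\Oo_H\to\Ii_L(1)\to\Oo_H\to 0$, one refilters $\Ee$ with $\Oo_H^{\oplus 2}$ as subsheaf and $\Oo_H$ as quotient. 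Conversely, any extension of $\Oo_H$ by $\Oo_H^{\oplus 2}$ gives $e_L\ge 2$ from its two sub-factors, with equality (as opposed to $3$) unless $\Ee\cong\Oo_H^{\oplus 3}$ by (ii). The $e_R$ statement is dual.

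For (v), apply $\Hom_X(-,\Oo_H)$ to the sequence $0\to\Oo_H^{\oplus 2}\to\Ee\to\Oo_H\to 0$ provided by (iii), with extension class $(\xi_1,\xi_2)\in\Ext_X^1(\Oo_H,\Oo_H)^{\oplus 2}$. The connecting map sends $(\lambda_1,\lambda_2)\in\Hom_X(\Oo_H^{\oplus 2},\Oo_H)=\mathbf{k}^2$ to $\lambda_1\xi_1+\lambda_2\xi_2\in\Ext_X^1(\Oo_H,\Oo_H)$, giving $e_R=3-\mathrm{rank}(\xi_1,\xi_2)$; the condition $e_R=2$ is thus equivalent to $\mathrm{rank}(\xi_1,\xi_2)=1$, and after a change of basis of $\Oo_H^{\oplus 2}$ the extension becomes $(\xi,0)$ with $\xi\ne 0$, yielding $\Ee\cong\Ii_L(1)\oplus\Oo_H$; the converse is a direct check on the two factors. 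For (iv), the value $e_L=1$ forces the pattern $(1,1,1)$, so $\Ee_2\cong\Ii_L(1)$ and the outer extension is non-split; $\Ee_1$ is intrinsically the image of the up-to-scalar unique map $\Oo_H\to\Ee$, dually $\Ee/\Ee_2$ is the unique $\Oo_H$-quotient under $e_R=1$, hence the filtration is unique and the line $L$ is determined by $\Ee_2\cong\Ii_L(1)$. The converse uses that two linearly independent maps $\Oo_H\to\Ee$ would have distinct images (coinciding images would make them scalar multiples), producing distinct filtrations. The main technical point will be the rank analysis of $(\xi_1,\xi_2)$ underlying (v); everything else reduces to bookkeeping in the long exact sequences.
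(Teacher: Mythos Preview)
Your proof is correct and in fact more thorough than the paper's, which explicitly argues only (i), (ii), and (v), leaving (iii) and (iv) to the reader. Both approaches share the same ingredients (the dichotomy $\Ee_2\cong\Oo_H^{\oplus 2}$ versus $\Ee_2\cong\Ii_L(1)$ and the long exact sequences for $\Hom_X(\Oo_H,-)$ and $\Hom_X(-,\Oo_H)$), but the execution differs in (v). The paper takes the inclusion $j:\Oo_H^{\oplus 2}\hookrightarrow\Ee$ and the surjection $u:\Ee\twoheadrightarrow\Oo_H^{\oplus 2}$ supplied by $e_L=e_R=2$, observes that $u\circ j\neq 0$ for rank reasons, and extracts a direct $\Oo_H$-summand. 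Your route via the rank of the extension class $(\xi_1,\xi_2)\in\Ext_X^1(\Oo_H,\Oo_H)^{\oplus 2}$ is genuinely different and arguably more transparent: it makes the change-of-basis step explicit and yields the formula $e_R=3-\operatorname{rank}(\xi_1,\xi_2)$, from which (ii), (iii), and (v) can be read off simultaneously. Your systematic tracking of the pattern $(a_1,a_2,a_3)$ also gives a cleaner organizing principle than the paper's more ad hoc treatment.

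One small point in the converse of (iv): the phrase ``producing distinct filtrations'' deserves one more sentence. You need that each distinct image $\Oo_H\hookrightarrow\Ee$ actually sits as the first step of a full filtration with successive quotients $\Oo_H$. This follows from your own analysis of the $e_L\ge 2$ patterns: in either pattern you exhibit a subsheaf $\Oo_H^{\oplus 2}\subset\Ee$ with quotient $\Oo_H$, and any line in $\Oo_H^{\oplus 2}$ furnishes a valid $\Ee_1$, giving a $\PP^1$ of filtrations. Alternatively, one may invoke semistability of $\Ee$ (with the normalized Hilbert polynomial of $\Oo_H$) to see that any inclusion $\Oo_H\hookrightarrow\Ee$ is saturated with aCM quotient of rank one, to which Theorem~\ref{aprop2} applies; this is essentially the content of the paper's Remark immediately following the Lemma.
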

\begin{proof}
Certainly we have $e_L, e_R \ge 1$. In the exact sequence
\begin{equation}\label{ffe}
0\to \Ee_2 \to \Ee \to \Oo_H \to 0,
\end{equation}
the sheaf $\Ee_2$ is aCM of rank one, admitting an extension of $\Oo_H$ by $\Oo_H$. By the classification of acM sheaf of rank one, we get $\Ee_2 \cong \Oo_H^{\oplus 2}$ or $\Ee_2 \cong \Ii_L(1)$ for a line $L$. In particular, we have $\mathrm{hom}_X(\Oo_H, \Ee_2) \le 2$ and the equality hold only if $\Ee_2 \cong \Oo_H^{\oplus 2}$. Now apply the functor $\Hom_X(\Oo_H, -)$ to (\ref{ffe}) to see that $e_L\le 3$ and the equality hold if and only if $\Ee \cong \Oo_H^{\oplus 3}$. We also obtain similar assertion for $e_R$, by applying the functor $\Hom_X(-,\Oo_H)$.

If $\Ee \cong \Ii _L(1)\oplus \Oo _H$ for some line $L\subset H$, then we have $e_L=e_R=2$, because $\hom_X(\Oo _H,\Ii _L(1)) = \hom_X(\Ii _L(1),\Oo _H) =1$ as mentioned in the paragraph before Lemma \ref{aaa1}. Conversely assume $e_L=e_R=2$, in particular there exist an inclusion $j: \Oo _H^{\oplus 2} \hookrightarrow \Ee$ and a surjection $u: \Ee \twoheadrightarrow \Oo _H^{\oplus 2}$, otherwise the successive quotient $\Ee_i/\Ee_{i-1}$ would have a negative degree. Due to the rank counting, we have $u\circ j \ne 0$  and this gives $\Oo _H\subset j(\Oo _H^{\oplus 2})$ mapped isomorphically by $u$ onto some $\Oo _H\subset \Oo _H^{\oplus 2}$. Hence $\Oo _H$ is a factor of $\Ee$ and we get $\Ee \cong \Ii_L(1)\oplus \Oo_H$ for some line $L\subset H$.
\end{proof}

\begin{remark}
Note that in Lemma \ref{aaa1} $\Ee$ is a semistable sheaf on $\PP^3$ of pure depth two with the same normalized Hilbert polynomial as $\Oo _H$. Since $\Oo _H^{\oplus k}$ is polystable for any positive integer $k$, any nonzero map $u=(u_1, \ldots, u_k): \Oo _H^{\oplus k}\rightarrow \Ee$ has the image isomorphic to $\Oo _H^{\oplus c}$, where $c$ is the dimension of the linear span of $u_i$'s in $\Hom (\Oo _H,\Ee )$. Replacing $e_L$ by $\Hom (\Oo _H^{\oplus k}, \Ee)$, we get in the same way all statements for $e_L$, except (v).
\end{remark}

\begin{remark}\label{aaaa1}
Consider the case (iii) in Lemma \ref{aaa1} with $(e_L, e_R)=(2,1)$. Let $W$ be the set of all $(e_1,e_2)\in \Ext ^1_X(\Oo _H,\Oo _H)^{\oplus 2}$ such that $e_1$ and $e_2$ are linearly independent. In particular, $W$ is an integral variety of dimension $6$. Consider a sheaf $\Ee$ of this type, corresponding to $(e_1, e_2)$. In particular, $\Ee$ has a unique subsheaf $\Ee _2\cong \Oo _H^{\oplus 2}$. Take another sheaf $\Ee'$ corresponding to $(e'_1, e'_2)$, which is isomorphic to $\Ee$. Then $\Ee'$ also has a subsheaf $\Ee '_2\cong \Oo_H^{\oplus 2}$ and we have $f(\Ee_2 )=\Ee'_2$ for any isomorphism $f: \Ee \rightarrow \Ee'$. We get that two sheaves $\Ee$ and $\Ee'$ are isomorphic if and only if there is $M\in \mathrm{GL}(2,\mathbf{k})$ with $M(e_1,e_2) =(e'_1,e'_2)$. So the isomorphism classes are parametrized by the orbits of this action of $\mathrm{GL}(2,\mathbf{k})$ on $W$, i.e. each isomorphism class corresponds to a plane in $\Ext_X^1 (\Oo_H, \Oo_H)\cong \mathbf{k}^3$. So the family of the sheaves of this type is parametrized by $\PP^2$. A similar description may be applied to the case with $(e_L, e_R)=(1,2)$.
\end{remark}

The next goal will be to show the existence of unique non-layered aCM sheaf on $X$ of rank $3/2$ up to twist.

\begin{lemma}
For a point $p\in H\subset X$, we have $\mathrm{ext}_X^1(\Ii_p(1),\Oo_H(-1))=1$.
\end{lemma}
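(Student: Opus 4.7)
The plan is to reduce $\Ext^1_X(\Ii_p(1),\Oo_H(-1))$ to a local $\Ext$ computation at $p$, and then settle that with a change-of-rings argument.

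First, I would apply $\Hom_X(-,\Oo_H(-1))$ to the defining sequence $0\to\Ii_p(1)\to\Oo_X(1)\to\Oo_p\to 0$. The groups $\Ext^i_X(\Oo_X(1),\Oo_H(-1))\cong H^i(\Oo_H(-2))$ vanish for $i=0,1,2$, and since $\Oo_H$ has depth two as an $\Oo_{X,p}$-module (the sequence $x,y$ is regular on $\Oo_{H,p}$), Rees' theorem yields $\Hom_X(\Oo_p,\Oo_H(-1))=\Ext^1_X(\Oo_p,\Oo_H(-1))=0$. The long exact sequence then collapses to
$$\Ext^1_X(\Ii_p(1),\Oo_H(-1))\cong\Ext^2_X(\Oo_p,\Oo_H(-2)).$$
Since $\mathcal{E}xt^2_X(\Oo_p,\Oo_H(-2))$ is supported at $p$, the local-to-global spectral sequence identifies the right-hand side with its stalk. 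Writing $A=\Oo_{X,p}\cong k[[x,y,w]]/(w^2)$, $B=\Oo_{H,p}=A/(w)$, and $\mathbf{k}=A/\mathfrak{m}_A$, and using that the line-bundle twist is locally trivial, the problem reduces to showing $\Ext^2_A(\mathbf{k},B)\cong\mathbf{k}$.

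To compute $\Ext^2_A(\mathbf{k},B)$ I would invoke the Grothendieck change-of-rings spectral sequence
$$E_2^{p,q}=\Ext^p_B(\mathbf{k},\Ext^q_A(B,B))\Longrightarrow\Ext^{p+q}_A(\mathbf{k},B).$$
The key input is the $2$-periodic $A$-free resolution $\cdots\xrightarrow{w}A\xrightarrow{w}A\xrightarrow{w}A\to B\to 0$, which is available since $A$ is a hypersurface (matrix factorisation of $w^2$). Because $w$ acts as zero on $B$, every differential in $\Hom_A(A^{\bullet},B)$ is zero, so $\Ext^q_A(B,B)\cong B$ for all $q\ge 0$. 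Hence $E_2^{p,q}=\Ext^p_B(\mathbf{k},B)$, and since $B\cong k[[x,y]]$ is a two-dimensional regular Gorenstein local ring, $\Ext^p_B(\mathbf{k},B)$ equals $\mathbf{k}$ for $p=2$ and is zero otherwise. The $E_2$ page is concentrated in column $p=2$, the spectral sequence degenerates, and we read off $\Ext^2_A(\mathbf{k},B)\cong\mathbf{k}$.

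The hardest point is handling the non-flat change-of-rings argument; if it becomes inconvenient, I would fall back on an explicit calculation: write down the first few terms of the minimal $A$-free resolution of $\mathbf{k}$ (which is eventually periodic by Eisenbud's matrix-factorisation theorem for hypersurfaces), apply $\Hom_A(-,B)$, and extract $\Ext^2_A(\mathbf{k},B)$ as the second cohomology of the resulting small, explicit complex of $B$-modules via linear algebra over $k[[x,y]]$.
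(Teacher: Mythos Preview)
Your argument is correct (modulo the harmless slip $\Oo_H(-2)$ in place of $\Oo_H(-1)$, which you immediately absorb since the twist is locally trivial), but it is considerably heavier than what the paper does. The paper first applies Serre duality on the Gorenstein scheme $X$ (where $\omega_X\cong\Oo_X(-2)$) to rewrite
\[
\Ext^1_X(\Ii_p(1),\Oo_H(-1))\cong\Ext^1_X(\Oo_H,\Ii_p)^\vee,
\]
and then applies $\Hom_X(\Oo_H,-)$ to $0\to\Ii_p\to\Oo_X\to\Oo_p\to 0$; since $\Hom_X(\Oo_H,\Oo_X)=0$ and $\Ext^1_X(\Oo_H,\Oo_X)\cong H^1(\Oo_H(-2))^\vee=0$, the long exact sequence immediately gives $\Ext^1_X(\Oo_H,\Ii_p)\cong\Hom_X(\Oo_H,\Oo_p)\cong\mathbf{k}$. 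By contrast, you avoid Serre duality altogether and instead dualize in the first variable, reduce via the local-to-global spectral sequence to the stalk $\Ext^2_A(\mathbf{k},B)$, and then run a change-of-rings spectral sequence fed by the $2$-periodic resolution of $B$ over the hypersurface ring $A$. Your route is self-contained and makes the local structure completely explicit (and your fallback of computing directly from the minimal free resolution would also work), but the paper's duality trick gets there in two lines by turning the problem into one about $\Hom$ rather than $\Ext^2$.
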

\begin{proof}
We have the standard isomorphism
$$\Ext_X^1(\Ii_p(1),\Oo_H(-1))\cong\Ext_X^1(\Oo_H(-1),\Ii_p(1)\otimes\omega_X))^{\vee}\cong\Ext_X^1(\Oo_H,\Ii_p)^{\vee}.$$
We can apply the functor $\Hom_X(\Oo_H,-)$
  to the short exact sequence
  $$
  0\to\Ii_p\to\Oo_X\to\Oo_p\to 0,
  $$
\noindent to obtain the following strand of the associated long exact sequence:
\begin{align*}
0\cong\Hom_X(\Oo_H,\Oo_X)\to&\Hom_X(\Oo_H,\Oo_p)\cong\mathbf{k}\to\Ext_X^1(\Oo_H,\Ii_p)\\
\to&\Ext_X^1(\Oo_H,\Oo_X)\cong\Ext_X^1(\Oo_X,\Oo_H(-2))^{\vee}\cong 0,
\end{align*}
\noindent concluding the proof.
\end{proof}

\begin{proposition}\label{aprop3}
Let $\Ee_p$ be the unique non-trivial extension of $\Ii_p(1)$ by $\Oo_H(-1)$. Then $\Ee_p$ is a non-layered Ulrich sheaf on $X$ of rank $3/2$. Moreover, for any other point $q\in H$, we have $\Ee_p\cong\Ee_q$.
\end{proposition}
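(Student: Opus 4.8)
The plan is to verify in turn that $\Ee_p$ has rank $3/2$, is aCM (hence, being initialized with the right number of sections, Ulrich), and is non-layered, and finally to prove the independence of $p$. First I would read off the numerics from the defining sequence $0\to \Oo_H(-1)\to \Ee_p\to \Ii_p(1)\to 0$. Additivity of rank gives $\mathrm{rank}(\Ee_p)=\tfrac12+1=\tfrac32$. Twisting and taking cohomology, $H^0(\Oo_H(-1))=0$, $H^1(\Oo_H(-1))=0$, and $h^0(\Ii_p(1))=h^0(\Oo_X(1))-1=3$ yield $h^0(\Ee_p)=3=\deg(X)\cdot\mathrm{rank}(\Ee_p)$, while $h^0(\Ee_p(-1))=h^0(\Oo_H(-2))+h^0(\Ii_p)=0$ shows $\Ee_p$ is initialized. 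The depth lemma applied to the defining sequence gives $\mathrm{depth}(\Ee_p)\ge 1$, and the support is all of $X$. Thus everything will follow once aCM-ness is established.

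For aCM I must show $H^1(\Ee_p(t))=0$ for all $t$. Twisting the defining sequence and using $H^1(\Oo_H(t-1))=0$, one gets $H^1(\Ee_p(t))\hookrightarrow H^1(\Ii_p(t+1))$, which already vanishes for $t\ge -1$. For $t\le -2$ the obstruction is the connecting map $\partial_t\colon H^1(\Ii_p(t+1))\to H^2(\Oo_H(t-1))$, and $H^1(\Ee_p(t))=\ker\partial_t$. These maps assemble into a graded-module map $\partial$ equal to Yoneda multiplication by the extension class $e$. Now the module $H^1_*(\Ii_p)\cong\bigoplus_{s<0}\mathbf{k}$ has a one-dimensional socle $\mathbf{k}\,e_{-1}$ in degree $-1$, and this socle is \emph{essential}: multiplying any nonzero homogeneous element by a suitable power of a linear form not vanishing at $p$ carries it into $\mathbf{k}\,e_{-1}$. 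Consequently $\ker\partial=0$ if and only if $\partial$ is nonzero on the socle, i.e.\ if and only if $\partial_{-2}\neq 0$. So the entire aCM question collapses to the single non-vanishing $\partial_{-2}\colon H^1(\Ii_p(-1))\to H^2(\Oo_H(-3))$, a map of one-dimensional spaces, and this is exactly where the non-triviality of $e$ must enter. \textbf{This step is the main obstacle.} I would resolve it via Serre duality on the Gorenstein surface $X$ (with $\omega_X\cong\Oo_X(-2)$): the assignment $e\mapsto\partial_{-2}$ is the Yoneda pairing $\Ext_X^1(\Ii_p(1),\Oo_H(-1))\times H^1(\Ii_p(-1))\to H^2(\Oo_H(-3))$, and using the inclusion $\Oo_H(-3)\cong\Ii_H(-2)\hookrightarrow\omega_X$, which induces an isomorphism $H^2(\Oo_H(-3))\xrightarrow{\sim}H^2(\omega_X)\cong\mathbf{k}$, this pairing is identified with the perfect Serre pairing between the one-dimensional spaces $\Ext_X^1(\Oo_X,\Ii_p(-1))$ and $\Ext_X^1(\Ii_p(-1),\omega_X)$. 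The technical heart is checking that the induced comparison map $\Ext_X^1(\Ii_p(-1),\Oo_H(-3))\to\Ext_X^1(\Ii_p(-1),\omega_X)$ is an isomorphism (equivalently that the adjacent $\Ext$-term vanishes); granting this, non-degeneracy forces $\partial_{-2}(e)\neq0$ for $e\neq0$. Hence $H^1(\Ee_p(t))=0$ for all $t$, $\Ee_p$ is aCM, and therefore Ulrich.

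For non-layeredness I would argue by contradiction: suppose $\Ee_p$ admits a filtration $0=\Ee_0\subset\Ee_1\subset\Ee_2\subset\Ee_3=\Ee_p$ with $\Ee_i/\Ee_{i-1}\cong\Oo_H(a_i)$. Since $\Ee_p$ is Ulrich ($h^0(\Ee_p(-1))=0$) and globally generated, and each successive quotient is torsion-free on $H$, the saturation condition forces the bottom piece to be either $\Ee_1\cong\Oo_H(-1)=\ker\pi$ or $\Ee_1\cong\Oo_H$. In the first case $\Ee_p/\Ee_1\cong\Ii_p(1)$ would be layered, so $\Ii_p$ would be aCM, contradicting Theorem \ref{aprop2}. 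In the second case the composite $\Oo_H\cong\Ee_1\hookrightarrow\Ee_p\xrightarrow{\pi}\Ii_p(1)$ is nonzero (its image cannot sit inside $\Oo_H(-1)$), so the generator $\Oo_H\xrightarrow{\,w\,}\Ii_p(1)$ of $\Hom_X(\Oo_H,\Ii_p(1))\cong\mathbf{k}$ would lift to $\Ee_p$; but this lifting is precisely the vanishing of the connecting map Serre-dual to $\partial_{-2}$, which is nonzero by the same pairing. Both cases are impossible, so $\Ee_p$ is non-layered.

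Finally, for the independence of $p$ I would use the symmetry of $X$. The automorphisms of $X$ preserving $\Oo_X(1)$ act on $H\cong\PP^2$ through $\mathrm{PGL}_3$, hence transitively on closed points; so given $q\in H$ choose $\phi\in\Aut(X)$ with $\phi(p)=q$. Since $\phi$ fixes $H$ and the polarization, $\phi^{*}$ carries the defining sequence of $\Ee_q$ to a non-trivial extension of $\phi^{*}\Ii_q(1)\cong\Ii_p(1)$ by $\phi^{*}\Oo_H(-1)\cong\Oo_H(-1)$. By the uniqueness of such an extension (the preceding lemma), $\phi^{*}\Ee_q\cong\Ee_p$, so $\Ee_p$ and $\Ee_q$ are carried onto one another by an automorphism of $X$, giving $\Ee_p\cong\Ee_q$. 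I expect the routine parts to be the numerics and the two-case layering argument, with the genuine difficulty concentrated in the Serre-duality identification making $\partial_{-2}$ non-zero.
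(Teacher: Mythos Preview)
Your aCM argument is essentially the paper's: reduce everything to the nonvanishing of the single connecting map $\partial_{-2}$, then propagate downward by multiplying by a linear form not vanishing at $p$ (your ``essential socle'' remark is exactly the paper's multiplication-by-$\ell$ commutative-square step). The paper identifies $\partial_{-2}$ with the extension class more directly than your Serre-duality detour, but the content is the same. Your non-layered argument is organized differently from the paper's (which first establishes a bijection between $\PP\Hom_X(\Ee_p,\Oo_X(1))$ and $H$, then observes that a surjection $\Ee_p\twoheadrightarrow\Oo_H$ would produce a map in this $\PP^2$ whose image is not any $\Ii_q(1)$); your approach via the obstruction to lifting $\Oo_H\to\Ii_p(1)$ is plausible, though the case split ``$\Ee_1\cong\Oo_H(-1)$ or $\Oo_H$'' is not justified by any ``saturation condition'' --- you should simply invoke the fact (Corollary~\ref{ff2}) that a layered Ulrich sheaf has all $a_i=0$, which forces $\Ee_1\cong\Oo_H$ and eliminates your first case entirely.

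The genuine gap is in your proof of $\Ee_p\cong\Ee_q$. Transitivity of $\Aut(X)$ on $H$ gives you only $\phi^{*}\Ee_q\cong\Ee_p$, which says the two sheaves lie in the same $\Aut(X)$-orbit; it does \emph{not} say they are isomorphic as $\Oo_X$-sheaves. The analogous statement for $\Ii_p$ and $\Ii_q$ illustrates the failure: $\phi^{*}\Ii_q\cong\Ii_p$ always, yet $\Ii_p\not\cong\Ii_q$ for $p\neq q$ since the non-locally-free locus distinguishes them. Your final sentence ``carried onto one another by an automorphism of $X$, giving $\Ee_p\cong\Ee_q$'' is therefore a non sequitur. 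The paper's proof of this step is genuinely different and does real work: using $\hom_X(\Ee_p,\Oo_X(1))=h^2(\Ee_p(-3))=3$, it analyzes the $\PP^2$ of maps $\Ee_p\to\Oo_X(1)$, shows none are surjective and that a nonempty open set of them has image exactly some $\Ii_q(1)$; since $\Ext^1_X(\Ii_q(1),\Oo_H(-1))$ is one-dimensional, the resulting short exact sequence identifies $\Ee_p$ with $\Ee_q$. You need an argument of this kind that produces, inside $\Ee_p$ itself, a surjection onto $\Ii_q(1)$ --- the symmetry shortcut cannot replace it.
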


\begin{proof}
Let us consider the unique non-trivial $\Oo_X$-sheaf given as an extension of the form
\begin{equation}\label{ss1}
0\to \Oo_H(-1) \to \Ee_p \stackrel{\pi}{\to} \Ii_p(1) \to 0.
\end{equation}

\quad{\emph{Claim 1}}: $\Ee_p$ is Ulrich.

\quad{\emph{Proof of Claim 1}}: For any $t\in \ZZ$, let $\delta _t: H^1(\Ii _p(t+1)) \rightarrow H^2(\Oo _H(t-1))$ be the coboundary map of the twist by $\Oo _X(t)$ of (\ref{ss1}). From the injection
$$
0\to H^1(\Ee_p(t))\to H^1(\Ii_p(t+1)),
$$
\noindent we get $h^1(\Ee_p(t))=0$ for $t\geq -1$ and $h^1(\Ee _p(t))\le 1$ for all $t\le -2$. We have $h^1(\Ii_p(-1))=h^2(\Oo_H(-3))=1$ and the coboundary map
$$
\delta_{-2} : H^1(\Ii _p(-1)) \to  H^2(\Oo _H(-3))
$$
\noindent corresponds to the non-trivial extension class $[\Ee_p]$. Thus $\delta_{-2}$ is an isomorphism and we get $h^2(\Ee_p(-2))=0$. Assume that $\Ee _p$ is not aCM and let $t_0$ be the largest integer such that $h^1(\Ee _p(t_0)) \ne 0$. We just saw that $t\le -3$. Since $\delta _{-2}\neq 0$, we have $\delta _{t_0+1} \neq 0$. Take an equation $\ell$ of a plane different from $H$. The multiplication by $\ell$ induces a maps between the twist by $\Oo _X(t_0)$ and the twist by $\Oo _X(t_0+1)$ of (\ref{ss1}). The induced map $\alpha : H^1(\Ii _p(t_0+1)) \rightarrow H^1(\Ii _p(t_0+2))$ is an isomorphism. Call $\eta : H^2(\Oo _H(t_0-1)) \rightarrow  H^2(\Oo _H(t_0))$ the map induced by the multiplication by $\ell$. Since $\delta _{t_0+1}\circ \alpha = \eta \circ \delta _{t_0}$, $\delta _{t_0+1}\ne 0$ and $\alpha$ is an isomorphism, we have $\delta _{t_0}\ne 0$, a contradiction. Finally,  the definition of $\Ee_p$ as an extension (\ref{ss1}) gives that $\Ee_p$ has positive depth and that $h^0(\Ee_p(-1))=0$ and $h^0(\Ee_p)=3$. Hence $\Ee_p$ is Ulrich. \qed

Note that we have $\hom_X(\Ee_p, \Oo_X(1))=h^2(\Ee (-3)) =3$ and this gives a two-dimensional projective space $\PP:=\PP \Hom_X(\Ee_p, \Oo_X(1))$ of morphisms $\Ee_p \rightarrow \Oo_X(1)$. If any of such maps is surjective, then its kernel would be isomorphic to $\Oo_H(l)$ for some $l\in \ZZ$ and we would get a different Hilbert polynomial for $\Ee_p$. Thus none of these maps are surjective. Now at least one of these maps is not surjective only at a point (namely, at $p$); in particular this is true for a non-empty subset of $\PP$, because the map $\PP \rightarrow \ZZ$ sending a morphism to the dimension of its zeros is upper semicontinuous. Since we have $\dim \Ext_X^1(\Ii_{p,H}(1), \Oo_X)=1$ but $\mathrm{hom}_X(\Ee, \Oo_X(1))=3$, for each $p\in H$ there is an open neighborhood $U_p$ of $p$ such that for every $q\in U_p$ there is a surjection $\Ee_p \rightarrow \Ii_q(1)$. Thus we get $\Ee _q\cong \Ee _p$ for all $q\in U_p$. Since any two non-empty open subsets of $H$ meet, we get $\Ee _q\cong \Ee _p$ for all $q\in H$. We also see that for every $v\in \PP$ there is $q\in H$ such that $\mathrm{Im}(v) =\Ii _q(1)$. So we get an identification $\PP \cong H$.

\quad{\emph{Claim 2}}: $\Ee_p$ is non-layered.

\quad{\emph{Proof of Claim 2}}:
Assume that $\Ee$ is layered, in particular by Corollary \ref{ff2}, we have a surjection $u: \Ee_p \rightarrow \Oo _H$. Composing with the inclusion $\Oo _H \rightarrow \Oo _X(1)$, we get a morphism $v\in \PP$ such that $\mathrm{Im}(v)\not \cong \Ii _q(1)$ for any $q\in H$, a contradiction. Thus $\Ee_p$ is not layered. \qed

Now Claim 1 and Claim 2 conclude the proof.
\end{proof}

\begin{notation}
Let us denote the unique non-layered minimally regular Ulrich sheaf of rank $3/2$ on $X$ in Proposition \ref{aprop3} by $\overline{\Ee}$. It has the same Hilbert polynomial as $\Oo _H(-1)\oplus \Ii _p(1)$, or as $\Oo _H^{\oplus 3}$.
\end{notation}

\begin{proposition}\label{aprop4}
For any non-layered aCM sheaf $\Ff$ of rank $3/2$ on $X$, $\overline{\Ee} \cong \Ff (t)$ for some $t\in \ZZ$.
\end{proposition}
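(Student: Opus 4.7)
The plan is to reduce an arbitrary non-layered aCM sheaf of rank $3/2$ on $X$ to the explicit sheaf $\overline{\Ee}$ produced in Proposition \ref{aprop3}. After replacing $\Ff$ by $\Ff(s)$ for a suitable $s\in\ZZ$, I may assume $\Ff$ is minimally regular; the discussion culminating in (\ref{eqerr3}) then supplies a short exact sequence
$$0\to \Ll \to \Ff \to \Ii_A(1)\to 0$$
with $A$ one of the subschemes (a)--(f) listed in Possible Cases \ref{poss}. The strategy is to eliminate every case except (e), where the conclusion will follow from a short cohomological computation.

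For the eliminations: in cases (a) and (f) Remark \ref{trivialities} gives a splitting $\Ff\cong\Ll\oplus\Oo_X(i)$ with $i\in\{0,1\}$, and since the rank-$1/2$ summand is $\Oo_H(a)$ by Lemma \ref{pop}, the canonical filtration of $\Oo_X(i)$ furnished by (\ref{eqa1}) realizes $\Ff$ as layered. Case (d) is excluded by the preceding lemma. In case (b) the kernel $\Ll$ is rank-one aCM, and each of the three types provided by Theorem \ref{aprop2} admits a filtration with $\Oo_H$-twists as subquotients, so the extension $\Ff$ is layered. Case (c) has already been reduced to case (b) in the lemma immediately following Lemma \ref{kernelacm}. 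Thus the only remaining possibility is case (e): $A=\{p\}$ for some $p\in H$.

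In this remaining case $\Ll$ is a rank-$1/2$ aCM sheaf, hence $\Ll\cong\Oo_H(a)$ by Lemma \ref{pop}, with $a\ge -1$ by the $1$-regularity from Lemma \ref{kernelacm}(iii). I would then take cohomology of
$$0\to \Oo_H(a)\to \Ff\to \Ii_p(1)\to 0,$$
use $h^1(\Ii_p(-1))=1$ (from $0\to\Ii_p\to\Oo_X\to\Oo_p\to 0$) and $h^2(\Oo_H(a-2))=h^0(\Oo_H(-a-1))$, and impose the aCM condition $h^1(\Ff(-2))=0$. This forces the coboundary map $\mathbf{k}\cong H^1(\Ii_p(-1))\to H^2(\Oo_H(a-2))$ to be injective, which is only possible when $a=-1$ (so that the target is one-dimensional) and the extension class is non-zero. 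Since $\mathrm{ext}_X^1(\Ii_p(1),\Oo_H(-1))=1$, any such non-trivial extension is isomorphic to the sheaf $\Ee_p$ of Proposition \ref{aprop3}, and that proposition further identifies $\Ee_p$ with $\overline{\Ee}$ independently of $p$. Untwisting yields $\Ff\cong\overline{\Ee}(t)$ with $t=-s$.

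The hard part will be the case analysis of Possible Cases \ref{poss}: checking carefully that cases (a)--(d) and (f) really do produce layered sheaves requires combining Theorem \ref{aprop2} with the structure lemmas on kernels of minimally regular sheaves. Once the reduction to case (e) is in hand, pinning down $a=-1$ is a single connecting-map calculation and the uniqueness of the non-trivial extension finishes the argument.
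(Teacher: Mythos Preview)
Your proposal is correct and follows essentially the same route as the paper: reduce to case (e) via the earlier lemmas and Remark \ref{trivialities}, then pin down $a=-1$ by cohomology and invoke the uniqueness of the non-trivial extension. The one minor difference is that you use the $1$-regularity of $\Ll$ from Lemma \ref{kernelacm}(iii) to obtain $a\ge -1$ in one stroke, whereas the paper eliminates $a\le -2$ and $a\ge 0$ by two separate cohomology computations; your packaging is a slight streamlining of the same argument.
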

\begin{proof}
We may assume that $\Ff$ is minimally regular and then $\Ff$ fits in the short exact sequence \ref{eqerr3}:
$$
0 \to \Ll \to \Ee \stackrel{\pi}{\to} \Ii _A(1) \to 0,
$$

\noindent for a closed subscheme $A\subsetneq X$ in Possible Cases \ref{poss}. By the previous discussions we need only to consider cases (b) and (e). Take $A$ as in case (b), i.e. $\Ii _A(1) \cong \Oo _H$. We know by Lemma \ref{kernelacm} that $\Ll$ is aCM of rank one and therefore it is one of the cases described in Theorem \ref{aprop2} and $\Ff$ is layered.


Finally assume case (e). In this case, $\ker(\pi)\cong \Oo_H(a)$ for some $a\in \ZZ$. If $a\le -2$, we get
$$H^1(\Ii_p)=0 \to H^2(\Oo_H(a-1)) \to H^2(\Ff(-1))=0$$
with $h^2(\Oo_H(a-1))>0$, a contradiction. If $a\ge 0$, we get $h^1(\Ff (-2)) \ge h^1(\Ii _p(-1)) -h^2(\Oo _H(a-2)) =1$, a contradiction. Now assume $a =-1$, in particular we get the exact sequence
\begin{equation}\label{ss4}
0\to \Oo_H(-1) \to \Ff \stackrel{\pi}{\to} \Ii_p(1) \to 0.
\end{equation}
Therefore $\Ff$ is the nontrivial extension from Proposition \ref{aprop3}, namely $\Ff\cong\overline{\Ee}$.
\end{proof}

\begin{proposition}\label{jjji}
$\overline{\Ee}$ is a stable $\Oo_X$-sheaf with pure depth two.
\end{proposition}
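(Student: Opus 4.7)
The statement has two parts: pure depth two, and stability in the Simpson sense for pure sheaves of dimension two on $\PP^3$. Pure depth two is immediate from $\overline{\Ee}$ being aCM (indeed Ulrich, by Proposition \ref{aprop3}): the vanishing $H^1(\overline{\Ee}(t))=0$ for all $t\in\ZZ$ forces, via Remark \ref{rremm}(i), every stalk $\overline{\Ee}_x$ to have depth at least two, hence equal to two since $\dim X=2$.

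For stability I want to show $p_\Ff<p_{\overline{\Ee}}$ for every proper non-zero subsheaf $\Ff\subsetneq\overline{\Ee}$. Since $\overline{\Ee}$ has the same Hilbert polynomial as $\Oo_H^{\oplus 3}$, its reduced Hilbert polynomial equals $P_{\Oo_H}$, and purity forces $\Ff$ pure of dimension two with $\mu(\Ff)\in\{1,2,3\}$. The case $\mu(\Ff)=3$ is immediate, since then the quotient has strictly smaller dimension and $P_\Ff<P_{\overline{\Ee}}$. For $\mu(\Ff)=1$, the nilpotency $w^2=0$ together with purity of $\Ff$ forces $w\cdot\Ff=0$, so $\Ff\cong\Ii_Z(a)$ for some zero-dimensional $Z\subset H$ and $a\in\ZZ$; destabilization requires $a\geq 1$ or $(a,\ell(Z))=(0,0)$. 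The case $\Ff=\Oo_H$ is ruled out because $\overline{\Ee}/\Oo_H$ would be aCM of rank one, hence layered by Theorem \ref{aprop2}, making $\overline{\Ee}$ layered and contradicting Proposition \ref{aprop3}. For $a\geq 1$, the composition $\Ii_Z(a)\to\overline{\Ee}\to\Ii_p(1)$ lands in the $w$-torsion subsheaf $\Oo_H\subset\Ii_p(1)$, and the induced map $\Ii_Z(a)\to\Oo_H$ vanishes since it factors through the reflexive hull $\Oo_H(a)$ with $\Hom(\Oo_H(a),\Oo_H)=0$; the residual lift $\Ii_Z(a)\to\Oo_H(-1)$ vanishes by the same computation.

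For $\mu(\Ff)=2$ I write $0\to K\to\Ff\to I\to 0$ with $K:=\Ff\cap\Oo_H(-1)$ and $I$ the image in $\Ii_p(1)$, so $(\mu(K),\mu(I))\in\{(0,2),(1,1)\}$. The subcase $(1,1)$ is ruled out by the Hilbert polynomial bound $P_\Ff\leq P_{\Oo_H(-1)}+P_{\Oo_H}=(t+1)^2<(t+1)(t+2)$. In the subcase $(0,2)$, the inclusion $\Ff\hookrightarrow\Ii_p(1)$ is necessarily proper, since $\Ff=\Ii_p(1)$ would split the non-trivial extension defining $\overline{\Ee}$; moreover, the existence of $\Ff\hookrightarrow\overline{\Ee}$ splits the pulled-back extension, so the non-zero class $[\overline{\Ee}]\in\Ext^1_X(\Ii_p(1),\Oo_H(-1))\cong\mathbf{k}$ would die under pullback to $\Ext^1_X(\Ff,\Oo_H(-1))$.

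The main obstacle is this final subcase $(0,2)$. To preclude it, I would apply $\Hom(-,\Oo_H(-1))$ to $0\to\Ff\to\Ii_p(1)\to Q\to 0$ (with $Q$ zero-dimensional, since strict destabilization forces $P_Q(t)<t+1$) and show, via Serre duality on $X$ together with direct $\Ext$-computations for skyscraper sheaves supported on $H$, that the connecting map $\Ext^1_X(Q,\Oo_H(-1))\to\Ext^1_X(\Ii_p(1),\Oo_H(-1))$ is zero, so the pullback is injective and $[\overline{\Ee}]$ survives. Equivalently, the surjection $\overline{\Ee}\to\Ii_p(1)$ does not factor through any proper zero-dimensional quotient of $\Ii_p(1)$; this can also be read off from the description in Proposition \ref{aprop3} that every non-zero map $\overline{\Ee}\to\Oo_X(1)$ has image exactly $\Ii_q(1)$ for some $q\in H$, ruling out any factorization through a different quotient.
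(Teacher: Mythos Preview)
Your overall strategy---checking all possible multiplicities of a destabilizing subsheaf---is sound, and the cases $\mu(\Ff)=3$, $\mu(\Ff)=1$ with $a\ge 1$, and $\mu(\Ff)=2$ in the $(1,1)$ subcase are handled correctly. But two steps do not go through as written.

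First, when $\Ff=\Oo_H$, you assert that $\overline{\Ee}/\Oo_H$ is aCM and then invoke Theorem \ref{aprop2}. This is not justified: from $0\to\Oo_H\to\overline{\Ee}\to G\to 0$ one only gets $H^1(G(t))\hookrightarrow H^2(\Oo_H(t))$, which is nonzero for $t\le -3$, so there is no immediate vanishing. The paper instead observes that any inclusion $\Oo_H\hookrightarrow\overline{\Ee}$ must land in $\pi^{-1}(\Oo_H)$ (since $\Hom(\Oo_H,\Oo_H(-1))=0$ and the $w$-torsion of $\Ii_p(1)$ is $\Oo_H$), forces $\pi^{-1}(\Oo_H)\cong\Oo_H\oplus\Oo_H(-1)$, and then identifies $\overline{\Ee}/\Oo_H$ as an extension of $\Ii_{p,H}(1)$ by $\Oo_H(-1)$; the only such extensions are $\Oo_H^{\oplus 2}$ (from the Koszul resolution of $\Ii_{p,H}$) or the split one, and global generation rules out the latter---yielding $\overline{\Ee}$ layered.

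Second, in the $(0,2)$ subcase you claim $Q=\Ii_p(1)/\Ff$ is zero-dimensional because ``$P_Q(t)<t+1$''. This bound is off: one only gets $P_Q(t)\le 2t+3$, so $Q$ may well be a curve of degree $\le 2$. The Ext argument you sketch would then have to handle $\Ext^1_X(Q,\Oo_H(-1))$ for one-dimensional $Q$, which is not obviously zero, and you do not carry it out. The paper circumvents this entirely: it first invokes \cite[Lemma 7.3]{faenzi-pons} to get semistability of any Ulrich sheaf, so a destabilizing $\Ff$ of rank $1$ must itself be Ulrich, and the quotient $\Bb=\overline{\Ee}/\Ff$ is torsion-free of rank $1/2$, hence $\Bb\hookrightarrow\Bb^{\vee\vee}\cong\Oo_H(b)$. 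Global generation forces $b\ge 0$; $b=0$ gives a surjection $\overline{\Ee}\to\Oo_H$, contradicting Proposition \ref{aprop3}; $b>0$ makes $p_\Bb>p_{\overline{\Ee}}$, impossible. The quotient viewpoint is much cleaner than tracking $Q$ inside $\Ii_p(1)$, and you were close to it in your final sentence but did not use it to close the argument.
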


\begin{proof}
We already showed in Observation at the end of the proof of Proposition \ref{aprop3} that $\overline{\Ee}$ has pure depth two. Moreover, by \cite[Lemma 7.3]{faenzi-pons}, $\overline{\Ee}$ is semistable, and if it was strictly semistable it would fit on an exact sequence of $\Oo _X$-sheaves
\begin{equation}\label{equu1}
0 \to \Aa \stackrel{u}{\to} \overline{\Ee} \to \Bb \to 0
\end{equation}
such that $\Aa$ is Ulrich, $\Bb$ is torsion-free and $\mathrm{rank}(\Aa )+\mathrm{rank}(\Bb )=3/2$.

\quad (a) Assume that $\Aa$ has rank $1/2$. Then, since it is Ulrich, $\Aa\cong\Oo_H$. Let $\pi : \overline{\Ee} \rightarrow \Ii _{p}(1)$ be the surjection in (\ref{ss1}). Since $\hom_X (\Oo _H,\Oo _H(-1)) =0$, the inclusion $u$ in (\ref{equu1}) induces an injective map $\Oo _H\rightarrow \Ii _p(1)$. Since $\Oo _H\cong \mathrm{ker}(f_{w,\Oo _X(1)})$, we have $\hom_X (\Oo _H,\Ii _p(1)) =1$. Thus we get $u(\Oo _H)\subset \pi ^{-1}(\Oo _H)$ and $\pi ^{-1}(\Oo _H)\cong \Oo _H\oplus \Oo _H(-1)$. We see that $\Ee /\Oo _H$ is an extension of $\Ii _{p,H}(1)$ by $\Oo _H(-1)$. By \cite[Lemma 13 in \S 4]{CCM}, we get the first isomorphism of the following
$$\Ext_{\PP^3}^1(\Ii_{p,H}(1), \Oo_H(-1)) \cong  \Ext_H^1(\Ii_{p,H}(1), \Oo_H(-1)) \cong \Ext_X^1(\Ii_{p,H}(1), \Oo_H(-1)),$$
because we have $\Hom _H(\Ii _{p,H},\Oo _H(-1)) =0$. Then the second isomorphism can be induced automatically. Thus $\Ee/\Oo_H$ is isomorphic to either $\Oo _H^{\oplus 2}$ or $\Oo _H(-1)\oplus \Ii _{p,H}(1)$. In the former case, $\Ee$ would be layered, while the latter case is impossible, because $\Ee /\Oo _H$ must be globally generated.

\quad (b) Now assume $\Bb$ has rank $1/2$, in particular we get $\Bb ^{\vee \vee}\cong \Oo _H(b)$ for some $b\in \ZZ$ by Lemma \ref{pop}. Since the map $\Bb \rightarrow \Bb ^{\vee \vee}$ is injective, we get $\Bb \cong \Ii _{Z,H}(b)$ for some zero-dimensional subscheme $Z\subset H$ or $\Bb \cong \Oo_H(b)$. Since $\overline{\Ee}$ is globally generated, $\Bb$ is also globally generated, in particular we get either $b>0$ or $b=0$ and $Z=\emptyset$. The latter case is excluded, because there is no surjection $\overline{\Ee} \rightarrow \Oo _H$ from the argument in the last three lines of the proof of Proposition \ref{aprop3}. The former case is also excluded, because if $b>0$, then the Hilbert polynomial of $\Ii _{Z,H}(b)$ is strictly bigger than the one of $\Oo _H$.
\end{proof}

\begin{corollary}
For each $r\in {\left(\frac{1}{2}\right)}{\ZZ}$ at least $3/2$, there exists a non-layered Ulrich sheaf of rank $r$ on $X$.
\end{corollary}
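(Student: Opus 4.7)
The plan is to produce the required sheaves as direct sums $\overline{\Ee}\oplus\Oo_H^{\oplus m}$ and to rule out layeredness via Jordan--H\"older uniqueness in the category of Gieseker-semistable pure sheaves on $X$.

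For $r=3/2$ the sheaf $\overline{\Ee}$ itself works by Propositions \ref{aprop3} and \ref{jjji}. For $r=3/2+m/2$ with $m\in\ZZ_{>0}$, I would set
$$\Ee:=\overline{\Ee}\oplus\Oo_H^{\oplus m}.$$
This has rank $r$, and it is Ulrich because both summands are Ulrich: $\overline{\Ee}$ is Ulrich by Proposition \ref{aprop3}, and $\Oo_H$ is Ulrich since it is a line bundle on a plane (so aCM), is initialized, and satisfies $h^0(\Oo_H)=1=\deg(X)\cdot\mathrm{rank}(\Oo_H)$; all the defining cohomological vanishings and the initialization/$h^0$ equality pass to direct sums.

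The key step is the non-layeredness. I would work in the category of pure $2$-dimensional sheaves on $X$ with reduced Hilbert polynomial $P_{\Oo_H}$. By Proposition \ref{jjji}, $\overline{\Ee}$ is stable there, and $\Oo_H$ is stable there as well, since its only pure depth-$2$ subsheaves are of the form $\Oo_H(-k)$ with $k>0$ and thus have strictly smaller reduced Hilbert polynomial. Hence $\Ee$ is semistable and its Jordan--H\"older factors, unique up to reordering, are one copy of $\overline{\Ee}$ together with $m$ copies of $\Oo_H$.

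Now suppose for contradiction that $\Ee$ is layered, with filtration $0=\Ee_0\subset\Ee_1\subset\cdots\subset\Ee_{2r}=\Ee$ and quotients $\Ee_i/\Ee_{i-1}\cong\Oo_H(a_i)$. Matching Hilbert polynomials gives $\sum_i\binom{a_i+t+2}{2}=2r\binom{t+2}{2}$; evaluating at $t=-2$ forces each $a_i\in\{0,1\}$, and evaluating at $t=-1$ forces each $a_i\in\{-1,0\}$, so every $a_i=0$. Thus the layered filtration exhibits $\Ee$ as a successive extension of copies of the stable sheaf $\Oo_H$, producing a Jordan--H\"older factorization of $\Ee$ into $2r$ copies of $\Oo_H$ with no $\overline{\Ee}$-factor, contradicting the uniqueness of Jordan--H\"older factors established above. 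Hence $\Ee$ is non-layered.

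The only subtle point is checking that all the sheaves under consideration sit in a single abelian subcategory in which Jordan--H\"older applies, i.e.\ that they are all semistable pure with the same reduced Hilbert polynomial; this is immediate from Proposition \ref{jjji} and the Ulrich property, and is essentially the same framework already invoked in the proofs of Propositions \ref{aprop3} and \ref{jjji}. Once this is in place the argument is a purely categorical contradiction.
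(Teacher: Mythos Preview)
Your proof is correct and uses the same construction $\overline{\Ee}\oplus\Oo_H^{\oplus(2r-3)}$ as the paper, but your argument for non-layeredness is genuinely different. The paper argues directly: assuming a layered filtration $0=\Gg_0\subset\cdots\subset\Gg_{2r}$ with quotients $\Oo_H$ (using Corollary~\ref{ff2} for $a_i=0$), it shows that the composite $\overline{\Ee}\hookrightarrow\Gg\twoheadrightarrow\Gg/\Gg_{2r-1}\cong\Oo_H$ must vanish, because $\overline{\Ee}$ admits no surjection onto $\Oo_H$ (a fact extracted from the end of the proof of Proposition~\ref{aprop3}) and any nonzero map from the globally generated $\overline{\Ee}$ to $\Oo_H$ would be surjective; descending induction then forces $\overline{\Ee}\subset\Gg_1\cong\Oo_H$, a contradiction. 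Your route instead packages everything into Jordan--H\"older uniqueness for semistable pure sheaves: since $\overline{\Ee}$ is stable (Proposition~\ref{jjji}) and $\Oo_H$ is stable with the same reduced Hilbert polynomial, the polystable decomposition fixes the JH factors, and a layered filtration would give an incompatible list. Your approach is cleaner and more conceptual, leaning on the stability established in Proposition~\ref{jjji}; the paper's approach is more elementary and self-contained, relying only on the concrete fact that $\overline{\Ee}$ has no quotient isomorphic to $\Oo_H$, without invoking the full JH machinery.

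Two small remarks. First, your Hilbert-polynomial computation reproving $a_i=0$ is fine, but you could simply cite Corollary~\ref{ff2}. Second, your justification that $\Oo_H$ is stable is slightly imprecise (not every proper subsheaf of $\Oo_H$ has the form $\Oo_H(-k)$; there are also ideal sheaves of zero-dimensional schemes), but the conclusion is of course correct since every nonzero proper subsheaf of $\Oo_H$ has multiplicity one and strictly smaller Hilbert polynomial.
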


\begin{proof}
Fix $r\in {\left(\frac{1}{2}\right)}{\ZZ}$ at least $3/2$ and consider the sheaf $\Gg:= \overline{\Ee}\oplus \Oo _H^{\oplus {(2r-3)}}$, for $\overline{\Ee}$ the unique rank $3/2$ non-layered sheaf from the previous remark. $\Gg$ is an Ulrich sheaf of rank $r$. We are going to show that $\Gg$ is  non-layered. Otherwise, by Corollary \ref{ff2}, there exists  a filtration $0 = \Gg _0\subset \Gg_1\subset \dots \subset \Gg _{2r-1}\subset \Gg _{2r} =\Gg$ of $\Gg$ with $\Gg _i/\Gg _{i-1} \cong \Oo _H$ for all $i$.  Consider the composition $u$ of the inclusion $\overline{\Ee} \hookrightarrow \Gg$ with the surjection $\Gg\rightarrow \Gg/\Gg_{2r-1} \cong \Oo_H$. In the end of the proof of Proposition \ref{aprop3}, we proved that $u$ cannot be a surjection. Since $\overline{\Ee}$ is globally generated and $h^0(\Oo_H)=1$, we get that $u$ is a zero map and $\overline{\Ee} \subseteq \Gg_{2r-1}$. By descending induction on $i$, we get $\overline{\Ee} \subseteq \Gg_i$ for all $i$, a contradiction.
\end{proof}

\begin{remark}\label{cases32}
In the rest of the section we will offer a description of indecomposable layered aCM sheaves $\Ee$ on $X$ of rank $3/2$. By the previous discussions we know that such a sheaf $\Ee$ should fit in
\begin{equation}\label{asd}
0\to \Ll \to \Ee \stackrel{\pi}{\to} \Oo_H \to 0,
\end{equation}
where $\Ll$ is aCM and $0$-regular by Lemma \ref{kernelacm} (case (b) of Possible Cases \ref{poss}); by Theorem \ref{aprop2}, $\Ll$ is isomorphic to either $\Oo_X(a)$, with $a\in\ZZ$; or $\Ii_C(a)$ for a plane curve $C\subset H$ with $a\ge d$; or $\Oo_H(a)\oplus \Oo_H(b)$, with $a\geq b\geq 0$. The first case is excluded:  $\Ext_X^1(\Oo_H, \Oo_X(t)) \cong H^1(\Oo_H(-t-2))^\vee$ is trivial and $\Ee$ would be decomposable. In Example \ref{bbb11} and Lemma \ref{bbb1} below, we describe the latter case. It turns out in the proof of Lemma \ref{wwe} that such sheaves fall into the case (2-iii) of Theorem \ref{iii}. 
\end{remark}

\begin{example}\label{bbb11}
For fixed nonnegative integers $a\ge b \ge 0$, set $d:= a+b$ and let $\Aa (b,d)$ be the set of isomorphism classes of sheaves fitting in an exact sequence
\begin{equation}\label{eqbbb2}
0 \to \Oo _H(a)\oplus \Oo _H(b)\stackrel{i}{\to} \Ee \stackrel{\pi}{\to} \Oo _H\to 0.
\end{equation}
$\Aa (b,d)$ is parametrized, not necessarily finite-to-one, by a vector space
$$\mathbf{E}(b,d):=\Ext ^1_X(\Oo_H, \Oo_H(a))\oplus \Ext_X^1(\Oo_H, \Oo_H(b))$$
of dimension ${a+3 \choose 2}+{b+3 \choose 2}$. Every element in $\Aa(b,d)$ is aCM, because it is an extension of aCM sheaves. Fix $[\Ee] \in \Aa (b,d)$. Since the case $(a,b)=(0,0)$ is already described in Lemma \ref{aaa1} and Remark \ref{aaaa1}, we assume that $a>0$; so we also get $d>0$. The sequence (\ref{eqbbb2}) gives $H^2(\Ee (-2)) =0$ and $H^2(\Ee (-3)) \ne 0$. In particular, $\Ee$ is minimally regular.

Suppose that $\Ee$ is induced by $\epsilon \in \mathbf{E}(b,d)$ and write $\epsilon = (e_1 ,e_2)$ with $e_1 \in  \Ext ^1_X(\Oo_H, \Oo _H(a))$ and $e_2 \in \Ext ^1_X(\Oo_H, \Oo _H(b))$. If $e_1 =0$ (resp. $e_2 =0$), then $\Oo_H(a)$ (resp. $\Oo_H(b)$) is a factor of $\Ee$. Now assume $e_1 \ne 0$ and $e_2\ne 0$. The extension $e_1 $ (resp. $e_2$) induces a rank one aCM sheaf $\Ii _C(a+1)$ (resp. $\Ii _D(b+1)$) for uniquely determined plane curves $C, D\subset H$ with $\deg (C)=a+1$ and $\deg (D)=b+1$. Conversely, the curves $C$ and $D$ determine $\alpha$ and $\beta$, respectively, up to a constant, but the constants may be different; for two nonzero constants $c$ and $c'$, we may consider the automorphism of $\Oo _H(a)\oplus \Oo _H(b)$ obtained by the multiplication by $c$ in the first factor and by the multiplication by $c'$ in the second factor, to show that the sheaf induced by $(ce_1, c'e_2)$ is isomorphic to $\Ee$.

Assume $b>0$, and in particular $\Oo _H(a)\oplus \Oo _H(b)$ is uniquely determined by the Harder-Narasimhan filtration of $\Ee$. Thus $\epsilon$ is uniquely determined by the isomorphism class of $\Ee$, up to isomorphisms of $\Oo _H(a)\oplus \Oo_H(b)$. If $\Oo _H(k)$ is a factor of $\Ee$ with $k>0$, then it is a factor of $\Oo _H(a)\oplus \Oo _H(b)$, because $\pi (\Oo _H(k)) =0$ and there is a surjection $\Oo _H(a)\oplus \Oo _H(b)\rightarrow \Oo _H(k)$ only if either $k=a$ or $k=b$. If $\Oo_H$ is a factor of
$\Ee$, then we get $\Ee\cong \Oo _H\oplus \Oo _H(a)\oplus \Oo _H(b)$, because $\hom_X (\Oo _H(a)\oplus \Oo _H(b),\Oo _H)=0$ and we have the surjection $\pi$.
\end{example}

\begin{lemma}\label{bbb1}
Let $\Bb (b,d)\subset \Aa(b,d)$ consist of all indecomposable $[\Ee] \in \Aa (b,d)$. Then $\Aa (b,d)\setminus \Bb (b,d)$ is parametrized by at most $d+1$ proper linear subspaces of $\mathbf{E}(b,d)$. in particular, $\Bb(b,d)$ is not empty.
\end{lemma}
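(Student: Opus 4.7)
My plan is to enumerate the possible ways a sheaf $[\Ee]\in\Aa(b,d)$ can decompose as a direct sum, convert each case into a vanishing condition on the extension class $(e_1,e_2)\in\mathbf{E}(b,d)$, and then exploit the automorphism group of $\Oo_H(a)\oplus\Oo_H(b)$ to organize these conditions into at most two (hence a fortiori at most $d+1$) proper linear subspaces of $\mathbf{E}(b,d)$.

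First I will invoke Example~\ref{bbb11}: a decomposable $\Ee$ must have $\Oo_H(k)$ as a direct summand for some $k\in\{0,a,b\}$, and $k=0$ corresponds to the trivial extension $(e_1,e_2)=0$. For $k=a$ I will apply $\Hom_X(-,\Oo_H(a))$ to the defining sequence~(\ref{eqbbb2}) and identify the connecting homomorphism
\[
\delta\colon\Hom_X(\Oo_H(a)\oplus\Oo_H(b),\Oo_H(a))\longrightarrow\Ext^1_X(\Oo_H,\Oo_H(a)),\quad (\phi_1,\phi_2)\mapsto \phi_1 e_1+\phi_2 e_2.
\]
A retraction $r\colon\Ee\to\Oo_H(a)$ of the inclusion of the first summand exists iff one can solve $\phi_1=\op{id}$ and $\delta(\phi_1,\phi_2)=0$ simultaneously, i.e., iff $e_1+\phi_2 e_2=0$ for some $\phi_2\in\Hom_X(\Oo_H(b),\Oo_H(a))\cong H^0(\Oo_H(a-b))$. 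The unipotent automorphism of $\Oo_H(a)\oplus\Oo_H(b)$ with upper-right entry $\phi_2$ pushes $(e_1,e_2)$ to $(0,e_2)$, so up to this action such a class lies in the proper linear subspace $W_1:=\{e_1=0\}\subset\mathbf{E}(b,d)$.

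A symmetric analysis for $k=b$ will produce the condition $\psi_1 e_1+e_2=0$ for some $\psi_1\in H^0(\Oo_H(b-a))$; when $a>b$ this $\Hom$-group is trivial and $e_2=0$, placing such classes in $W_2:=\{e_2=0\}$, whereas when $a=b$ swapping the two summands (also an automorphism) reduces to $W_1$ again. In every case the decomposable isomorphism classes are covered by the image in $\Aa(b,d)$ of at most two proper linear subspaces of $\mathbf{E}(b,d)$, which is at most $d+1$. For the final conclusion that $\Bb(b,d)\ne\emptyset$ a dimension count suffices: the orbit $\Aut(\Oo_H(a)\oplus\Oo_H(b))\cdot W_1$ is the image of the map $(F,e_2)\mapsto(Fe_2,e_2)$ from $H^0(\Oo_H(a-b))\times H^0(\Oo_H(b+1))$, whose image has dimension at most $\binom{a-b+2}{2}+\binom{b+3}{2}<\binom{a+3}{2}+\binom{b+3}{2}=\dim\mathbf{E}(b,d)$, and $W_2$ is visibly proper, so the decomposable locus in $\mathbf{E}(b,d)$ is a proper subvariety.

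The principal technical obstacle will be reconciling the fact that the raw decomposable locus in $\mathbf{E}(b,d)$ is not itself a linear subspace but a cone (encoding the divisibility relation $e_2\mid e_1$) with the claim that after passing to isomorphism classes it is captured by a few genuinely linear subspaces; the key observation is that the upper-triangular elements of $\Aut(\Oo_H(a)\oplus\Oo_H(b))$ exactly realize this non-linearity, so that one obtains honest linear subspaces only after factoring through $\Aa(b,d)$. A secondary subtlety will be the edge cases $a=b$ (where the Harder--Narasimhan filtration is non-unique and the two summand types collapse) and $b=0$ (where the $\Oo_H$-summand and $\Oo_H(b)$-summand cases coincide); in all such cases the count of subspaces only decreases, so the bound $d+1$ continues to hold.
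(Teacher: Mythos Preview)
Your proof is correct and in fact sharper than the paper's: you show that two proper linear subspaces $W_1=\{e_1=0\}$ and $W_2=\{e_2=0\}$ already parametrize the decomposable locus, whereas the paper obtains the coarser bound $d+1$. Both arguments begin by observing that a decomposable $\Ee$ has a summand $\Oo_H(k)$ with $k\in\{0,a,b\}$, but the methods diverge from there. The paper analyzes the complementary rank-one sheaf $\Ff=\Ee/\Oo_H(k)$ case by case via the classification in Theorem~\ref{aprop2}, checking by hand which $\Ff$'s can occur and tracing back to a subspace of $\mathbf{E}(b,d)$. Your approach instead translates ``$\Oo_H(k)$ is a summand'' into ``the inclusion $i_k$ admits a retraction'' and reads this off from the connecting map of $\Hom_X(-,\Oo_H(k))$; the resulting divisibility condition $e_1+\phi_2 e_2=0$ is then normalized to $e_1=0$ using the unipotent part of $\Aut(\Oo_H(a)\oplus\Oo_H(b))$. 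This is cleaner and explains exactly why the decomposable locus is the $\Aut$-saturation of a pair of coordinate hyperplanes.

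Two small imprecisions are worth flagging. First, the sentence ``$k=0$ corresponds to the trivial extension'' is borrowed from Example~\ref{bbb11} and is only proved there under the hypothesis $b>0$; for $b=0$ one can have $\Ee\cong\Oo_H\oplus\Ii_D(d+1)$ with non-trivial class $(e_1,0)$. Your edge-case remark does catch this (``the $\Oo_H$-summand and $\Oo_H(b)$-summand cases coincide''), and indeed your $k=b$ analysis correctly places such classes in $W_2$, so no logical gap results; but the earlier sentence should be qualified. Second, when $a=b$ the automorphism group is $\mathrm{GL}_2$ rather than the upper-triangular group, so the orbit $\Aut\cdot W_1$ is slightly larger than the image of your map $(F,e_2)\mapsto(Fe_2,e_2)$; it is the locus of linearly dependent pairs $(e_1,e_2)$. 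The dimension count you give ($1+\binom{a+3}{2}$) is nonetheless correct for this larger orbit as well, so the non-emptiness conclusion survives.
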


\begin{proof}
Fix $[\Ee] \in \Aa (b,d)\setminus \Bb (b,d)$. Since $\Ee$ has rank $3/2$ and it is decomposable, it has a factor of rank $1/2$. Let $k$ be the minimum among the integers such that $\Ee$ has a factor $\Oo _H(k)$, i.e. $\Ee \cong \Oo _H(k)\oplus \Ff$ with $\Ff$ a rank one aCM sheaf.

First assume $k>0$. Since $\pi (\Oo _H(k)) =0$, we get that $\Ff$ fits into an exact sequence
$$0 \rightarrow \Oo _H(d-k) \rightarrow \Ff \rightarrow \Oo _H \rightarrow 0.$$
So $\Ff$ is isomorphic to either $\Oo _H\oplus \Oo _H(d-k)$ or $\Ii _E(d-k+1)$ for some plane curve $E\subset H$ with $\deg (E) =d-k+1$. The former case is impossible due to the definition of $k$. In the latter case, the extension classes arising as $\Ff$ are parameterized by $\Ext_X^1(\Oo_H, \Oo_H(d-k)) \cong H^0(\Oo_H(d-k+1))$, whose dimension is ${d-k+3\choose 2}$. So if $k\ge b$, then the isomorphism classes arising as $\Ee$ are parameterized by a proper linear subspace of $\mathbf{E}(b,d)$. If $k<b$, then each map $\mathrm{Hom}(\Oo_H(a)\oplus \Oo_H(b), \Ee)$ has image contained in $\Ff$. So we get an injective map $\Oo_H(a)\oplus \Oo_H(b) \rightarrow \Ii_E(d-k+1)$. The composition of this injective map with the surjection $\Ii_E(d-k+1) \rightarrow \Oo_H$ cannot be trivial, otherwise we would get an injection $\Oo_H(a)\oplus \Oo_H(b) \rightarrow \Oo_H(d-k)$. Thus we have $b=0$, i.e. $(a,b)=(d,0)$. In this case we see that the extension classes arising as $\Ff$ are parameterized by $\Ext_X^1(\Oo_H, \Oo_H(d))$ whose dimension is $d+3 \choose 2$. So such sheaves are parameterized by a proper linear subspace of $\mathbf{E}(0,d)$.

Now assume $k=0$. So we get $\Ee \cong \Oo _H\oplus \Ff$ with either
\begin{itemize}
\item $\Ff \cong \Oo _H(i)\oplus \Oo _H(d-i)$ for some integer $i$ with $0\le 2i \le d$
,or
\item $\Ff \cong \Ii _D(j)$ for some curve $D\subset H$ with $z:= \deg (D)=2j-d-1$.
\end{itemize}
Note that $\Ff$ cannot be a line bundle on $X$ by the regularity condition of $\Ee$. In the former case, we get $i=0$; $\Ff \cong \Oo_H\oplus \Oo_H(d)$. This implies that $\Ee \cong \Oo_H^{\oplus 2} \oplus \Oo_H(d)$, which corresponds to the trivial element of $\mathbf{E}(0,d)$. Now assume the latter case. Since $\Ee$ is $0$-regular, $\Ff$ is globally generated. In particular, we get $j\ge z$. If $j>z$, then (\ref{eqbbb1}) gives $\hom_X (\Ii _D(j),\Oo _H)=0$. So the factor $\Oo _H$ of $\Ee$ induces the splitting of (\ref{eqbbb2}), in particular we get $\Ff \cong \Oo_H(a)\oplus \Oo_H(b)$, a contradiction. Now assume $j=z$, i.e. $j=d+1$. If $b>0$, then the map $i$ from the exact sequence (\ref{eqbbb2}) has image contained in $\Ff$, while (\ref{eqbbb2}) implies that each map in $\mathrm{Hom} (\Oo _H(a)\oplus \Oo _H(b),\Ii _D(j))$ has image contained in $\Oo _H(d)$. So $i$ can not be injective, a contradiction. In case $b=0$, i.e. $(a,b)=(d,0)$, we get a proper linear subspace of $\mathbf{E}(0,d)$ as above.
\end{proof}

\begin{remark}
Fix $\Ee \in \Bb (b,d)$ for some integers $b, d$ and assume $b>0$. For any two positive integers $a'$ and $b'$, we have $\Hom(\Oo _H(a')\oplus \Oo _H(b'),\Oo _H)=0$. So if $\{a,b\} \ne \{a',b'\}$, there is no injective map $\Oo _H(a')\oplus \Oo _H(b') \rightarrow \Ee$. In particular, we have $\Bb (b,d)\cap \Bb (b',d') =\emptyset$, if $(b,d)\ne (b',d')$.
\end{remark}

\begin{lemma}\label{wwe}
Let $\Ee$ be a minimally regular indecomposable sheaf fitting into (\ref{asd}). If $\Ee$ is not as in Lemma \ref{aaa1}, then there exists a plane curve $C\subset H$ of degree $d$ such that $\Ee$ fits into one of the following sequences:
\begin{enumerate}
\item $0\rightarrow \Ii_C(a) \rightarrow \Ee \rightarrow \Oo_H \rightarrow 0$ for $a\ge d$;
\item $0\rightarrow \Oo_H(b) \rightarrow \Ee \rightarrow \Ii_C(d) \rightarrow 0$ for $0\le b<d$;
\end{enumerate}
\end{lemma}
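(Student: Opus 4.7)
My approach is to enumerate the possibilities for $\Ll$ identified in Remark \ref{cases32} and then, in each case, either rule it out or produce one of the advertised presentations. By that remark combined with Theorem \ref{aprop2}, the rank-one $0$-regular aCM sheaf $\Ll$ is isomorphic to one of: $(i)$ $\Oo_X(a)$, $(ii)$ $\Ii_C(a)$ with $a\ge\deg(C)$, or $(iii)$ $\Oo_H(a)\oplus\Oo_H(b)$ with $a\ge b\ge 0$. Case $(i)$ is immediate: the vanishing $\Ext_X^1(\Oo_H,\Oo_X(a))\cong H^1(\Oo_H(-a-2))^\vee=0$ noted in Remark \ref{cases32} would force the extension to split, contradicting indecomposability. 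Case $(ii)$ is already the form of conclusion $(1)$, with $d=\deg(C)$.

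The substance of the argument lies in case $(iii)$. Writing the class as $(e_1,e_2)\in\Ext_X^1(\Oo_H,\Oo_H(a))\oplus\Ext_X^1(\Oo_H,\Oo_H(b))$, the discussion in Example \ref{bbb11} shows that $e_i=0$ would make one of the summands a direct factor of $\Ee$; indecomposability therefore forces $e_1,e_2\ne 0$. Moreover, if $(a,b)=(0,0)$ then $\Ee$ admits the filtration $0\subset\Oo_H\subset\Ll\subset\Ee$ with all successive quotients isomorphic to $\Oo_H$, placing it in the framework of Lemma \ref{aaa1}, contrary to hypothesis. Hence $a>0$.

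To obtain conclusion $(2)$, I would take $\Oo_H(b)\hookrightarrow\Ll\hookrightarrow\Ee$ as subsheaf and set $\Ff:=\Ee/\Oo_H(b)$, so that $\Ff$ fits into the non-split extension $0\to\Oo_H(a)\to\Ff\to\Oo_H\to 0$ determined by $e_1\ne 0$. Being an extension of rank-$1/2$ aCM sheaves, $\Ff$ is itself a rank-one aCM sheaf on $X$, and Theorem \ref{aprop2} applies. The alternative $\Ff\cong\Oo_X(k)$ is ruled out by combining a Hilbert-polynomial comparison (which allows only $(k,a)=(0,-1)$ or $(1,1)$) with the vanishing $\Hom_X(\Oo_H(1),\Oo_X(1))\cong H^0(\Oo_H(-1))=0$ that excludes the second case, leaving only $a=-1$, against $a>0$. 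The alternative $\Ff\cong\Oo_H(s_1)\oplus\Oo_H(s_2)$ is ruled out by a direct sections calculation: for $a>0$, any surjection $\Oo_H(s_1)\oplus\Oo_H(s_2)\twoheadrightarrow\Oo_H$ whose kernel is $\Oo_H(a)$ forces $(s_1,s_2)=(a,0)$ and therefore a split extension, contradicting $e_1\ne 0$. Hence $\Ff\cong\Ii_C(d)$ for some plane curve $C\subset H$ of degree $d:=a+1$, and the sequence $0\to\Oo_H(b)\to\Ee\to\Ii_C(d)\to 0$ with $0\le b\le a<d$ is exactly conclusion $(2)$.

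The main obstacle is the final identification of $\Ff$: the two non-ideal-sheaf alternatives in Theorem \ref{aprop2} must each be eliminated by carefully combining Hilbert-polynomial bookkeeping with the non-triviality of $e_1$ and the hypothesis $a>0$. Once this is accomplished, everything else is routine case analysis guided by the classification results already established in the paper.
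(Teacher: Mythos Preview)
Your proof is correct and follows the same overall strategy as the paper: reduce to the case $\Ll\cong\Oo_H(a)\oplus\Oo_H(b)$ and analyze the quotient $\Ee/\Oo_H(b)$ as a rank-one aCM sheaf. The difference is that you use indecomposability more efficiently. By invoking Example~\ref{bbb11} at the outset to conclude $e_1\ne 0$, you know immediately that the extension $0\to\Oo_H(a)\to\Ee/\Oo_H(b)\to\Oo_H\to 0$ is non-split, and since $a>0$ (having already excluded $(a,b)=(0,0)$ as the Lemma~\ref{aaa1} case) this forces $\Ee/\Oo_H(b)\cong\Ii_C(a+1)$ directly. The paper instead allows the possibility $\Ee/\Oo_H(b)\cong\Oo_H(a)\oplus\Oo_H$ and then performs a second round of casework, pulling back $\Oo_H(a)$ to a subsheaf $\Ff\subset\Ee$, classifying $\Ff$ in turn, and eliminating the remaining options via further Ext and Hom computations. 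Your route is shorter; the paper's route has the minor advantage that it does not rely on the specific observation from Example~\ref{bbb11} about vanishing components of the extension class. One small remark: your elimination of $\Ff\cong\Oo_X(k)$ via Hilbert polynomials is correct but heavier than needed---once $e_1\ne 0$ and $a\ge 0$, Example~\ref{ex1} already identifies the non-split extensions of $\Oo_H$ by $\Oo_H(a)$ with the ideal sheaves $\Ii_C(a+1)$ for $\deg C=a+1\ge 1$, so neither the line-bundle nor the split case can arise.
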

\begin{proof}
By Remark \ref{cases32}, we can assume that $\Ll \cong \Oo _H(a)\oplus \Oo _H(b)$ with $a\ge b\ge 0$. The quotient sheaf $\Ee/\Oo_H(b)$ is an extension of $\Oo_H$ by $\Oo_H(a)$. So it is isomorphic to either $\Oo_H(a)\oplus \Oo_H$ or $\Ii_C(a+1)$ for a plane curve $C\subset H$ of degree $a+1$ by Example \ref{ex1}. The latter case falls into case (2).

In the former case, let $u: \Ee \rightarrow \Oo_H(a)\oplus \Oo_H$ be the quotient map and set $\Ff:=u^{-1}(\Oo_H(a))$. Then $\Ff$ is an extension of $\Oo_H(a)$ by $\Oo_H(b)$ and $\Ee$ is an extension of $\Ff$ by $\Oo_H$. By Example \ref{ex1} $\Ff$ is isomorphic to either
$$\Oo_H(a)\oplus \Oo_H(b) \text{ or } \Ii_D(b+1)$$
for a plane curve $D$ of degree $b-a+1$. Note that the plane curve $D$ makes sense only if $b-a+1\ge 0$, i.e. $b\in \{a-1, a\}$, and if $b=a-1$, we have $\Ii_D(2b+2-a) \cong \Oo_X(a)$.
Assume first that $\Ff \cong \Oo_H(a)\oplus \Oo_H(b)$. In particular, $\Ee$ is induced by
$$(e_1, e_2) \in \Ext_X^1(\Oo_H(a), \Oo_H)\oplus \Ext_X^1(\Oo_H(b), \Oo_H).$$
From the indecomposability of $\Ee$, we get $a\le 1$. If $a=1$, then the sheaf corresponding to $e_1$ is isomorphic to $\Oo_X(1)$ and $\Ee$ is an extension of $\Oo_X(1)$ by $\Oo_H(b)$, which is trivial by Remark \ref{x+1}. So we have $(a,b)=(0,0)$ and we already describe the case in Lemma \ref{aaa1} and Remark \ref{aaaa1}. Now assume that $\Ff \cong \Ii_D(b+1)$, where $D$ is either empty or a line. If $b=a-1$, then we get $\Ff \cong \Oo_X(a)$. In this case $\Ee$ is decomposable by Remark \ref{x+1}. So we may assume that $\Ff \cong \Ii_L(a+1)$. In particular, $\Ee$ is an extension of $\Ii_L(a+1)$ by $\Oo_H$. Apply the functor $\mathrm{Hom}_X(- , \Oo_H)$ to the exact sequence
$$0\to \Oo_H(a) \to \Ii_L (a+1) \to \Oo_H(a) \to 0,$$
we get $\Ext_X^1(\Ii_L(a+1), \Oo_H)=0$ for $a\ge 2$ by Example \ref{ex1}. The case $a=0$ falls into case (2) with $(b,d)=(0,1)$, while in the case $a=1$ we have $\mathrm{hom}_X(\Ff, \Oo_H)=0$. So the map $\pi$ gives the splitting of (\ref{asd}).
\end{proof}

\begin{proof}[Proof of Theorem \ref{iii}:]
If $\Ee$ is not layered, then we may use Propositions \ref{aprop3} and \ref{jjji}. If $\Ee$ is decomposable, then we use Theorem \ref{aprop2}. Finally when $\Ee$ is indecomposable and layered, we may use the proof of Proposition \ref{aprop3} and Lemma \ref{wwe}.
\end{proof}

From the proof of Lemma \ref{wwe}, the case (2) of Lemma \ref{wwe} occurs when $\Ee$ fits into the exact sequence (\ref{eqbbb2}), which is already described in Example \ref{bbb11} and Lemma \ref{bbb1}.

Now assume $\mathrm{ker}(\pi) \cong \Ii _C(a)$ for some degree $d$ curve $C$ and a unique integer $a$ as in (1) of Lemma \ref{wwe}. Indeed, if $a\ge d$, then we get $\dim \Ext_X^1(\Oo_H, \Ii_C(a))\ge {2+a \choose 2}-{2+a-d \choose 2}$, which is positive.

\begin{lemma}\label{iid}
Any non-trivial extension of $\Oo_H$ by $\Ii_C(a)$ with $a\ge d$ is indecomposable.
\end{lemma}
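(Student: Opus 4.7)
The plan is to assume for contradiction that $\Ee \cong \Ff_1 \oplus \Ff_2$ with both summands nonzero and derive an inconsistency. Since $\mathrm{rank}(\Ee) = 3/2$, after relabeling we may take $\mathrm{rank}(\Ff_1) = 1/2$ and $\mathrm{rank}(\Ff_2) = 1$; as a direct summand of the aCM sheaf $\Ee$, $\Ff_1$ is itself aCM of rank $1/2$, so $\Ff_1 \cong \Oo_H(k)$ for some $k \in \ZZ$ by Lemma \ref{pop}. The degenerate case $d = 0$ is immediate: then $C = \emptyset$, $\Ii_C = \Oo_X$, and Remark \ref{x+1} gives $\Ext_X^1(\Oo_H, \Oo_X(a)) = 0$, forcing the extension to split. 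Henceforth I assume $d \geq 1$.

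Write $\iota_1 : \Ff_1 \hookrightarrow \Ee$ and $\pi_1 : \Ee \twoheadrightarrow \Ff_1$ with $\pi_1 \iota_1 = \mathrm{id}$, and let $\beta : \Ff_1 \to \Oo_H$ denote the composition $\Ff_1 \xrightarrow{\iota_1} \Ee \twoheadrightarrow \Oo_H$. The entire argument hinges on whether $\beta$ vanishes.

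If $\beta = 0$, then $\iota_1(\Ff_1) \subseteq \Ii_C(a)$, and the restriction $\pi_1|_{\Ii_C(a)}$ retracts this inclusion, giving a splitting $\Ii_C(a) \cong \Oo_H(k) \oplus K$. The direct summand $K$ is again aCM of rank $1/2$, hence $K \cong \Oo_H(l)$ by Lemma \ref{pop}. This contradicts the indecomposability of $\Ii_C(a)$ for $d \geq 1$: by Example \ref{ex1}, the defining extension (\ref{eqbbb1}) has class equal to the defining equation of $C$, which is nonzero once $d \geq 1$. If instead $\beta \neq 0$, then $\Hom_X(\Oo_H(k), \Oo_H) = H^0(\Oo_H(-k)) \ne 0$ forces $k \leq 0$. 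When $k = 0$, $\beta$ is a nonzero scalar endomorphism of $\Oo_H$, hence an isomorphism, so $\iota_1$ is a section of $\Ee \twoheadrightarrow \Oo_H$ splitting the extension, again a contradiction.

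The only remaining subcase, $\beta \neq 0$ and $k < 0$, is where I expect the technical heart of the proof to lie, and I plan to dispose of it by a double Hom-vanishing. Consider $\phi := \pi_1|_{\Ii_C(a)} : \Ii_C(a) \to \Oo_H(k)$. Applying $\Hom_X(-, \Oo_H(k))$ to the sequence (\ref{eqbbb1}) defining $\Ii_C(a)$, the flanking terms $\Hom_X(\Oo_H(a-d), \Oo_H(k)) = H^0(\Oo_H(k - a + d))$ and $\Hom_X(\Oo_H(a-1), \Oo_H(k)) = H^0(\Oo_H(k - a + 1))$ both vanish under our assumptions $k \leq -1$ and $a \geq d \geq 1$, so $\Hom_X(\Ii_C(a), \Oo_H(k)) = 0$ and $\phi = 0$. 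Consequently $\pi_1$ factors through the quotient $\Ee/\Ii_C(a) = \Oo_H$, but $\Hom_X(\Oo_H, \Oo_H(k)) = H^0(\Oo_H(k)) = 0$ for $k < 0$, forcing $\pi_1 = 0$, in contradiction with $\pi_1 \iota_1 = \mathrm{id}$. Once this cohomological collapse is spotted, the full statement follows cleanly from the indecomposability of $\Ii_C(a)$ (Example \ref{ex1}) together with the trivial-splitting argument in the $k = 0$ branch.
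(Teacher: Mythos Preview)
Your argument is correct and takes a genuinely different route from the paper. The paper works with the rank-one summand $\Gg$, invokes the classification of rank-one aCM sheaves (Theorem \ref{aprop2}) to list the possibilities $\Gg\cong\Oo_X,\ \Oo_H^{\oplus 2},\ \Ii_L(1)$, and then eliminates each by comparing $h^0$'s, splitting further into the cases $b>0$ and $b=0$. You instead track the rank-$1/2$ summand $\Oo_H(k)$ via the composite $\beta$ and the projection $\pi_1$, and dispatch everything with two elementary Hom-vanishings together with the indecomposability of $\Ii_C(a)$. This is cleaner: it avoids the rank-one classification entirely and the numerical equalities (\ref{o1o}), (\ref{o1o1}).

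One small point worth tightening: in the $\beta=0$ branch you deduce that $\Ii_C(a)$ is indecomposable from the non-vanishing of the extension class in (\ref{eqbbb1}). Strictly speaking, a non-split extension can still be abstractly decomposable, so you should add one sentence: any decomposition $\Ii_C(a)\cong\Oo_H(k)\oplus\Oo_H(l)$ would make $\Ii_C$ an $\Oo_H$-sheaf (annihilated by $w$), whereas $\Ii_C$ coincides with $\Oo_X$ on the open set $X\setminus C$ and hence is not killed by $w$. Alternatively, by Hilbert polynomial one has $\{k,l\}=\{a-1,a-d\}$, and for $d\ge 2$ the copy of $\Oo_H(a-1)$ inside $\Ii_C(a)$ is unique, so the splitting would retract (\ref{eqbbb1}); for $d=1$ the paper already records $\Ii_L(1)\not\cong\Oo_H^{\oplus 2}$. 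Either remark closes the gap.
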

\begin{proof}
Let $\epsilon$ be the non-trivial extension class of $\Oo_H$ by $\Ii_C(a)$ corresponding to $\Ee$ and assume that $\Ee$ is decomposable with $\Ee \cong \Oo _H(b)\oplus \Gg$ for some aCM sheaf $\Gg$ of rank $1$, where $\Gg$ is described in Theorem \ref{aprop2}. Since $\Ee$ is globally generated, $\Gg$ is also globally generated and $b\ge 0$.

\quad (i) First assume $b>0$, in particular we have $\Hom_X(\Oo _H(b),\Oo _H)=0$ with a surjective map $\Gg \rightarrow \Oo _H$. Since $\Gg$ is globally generated, by Theorem \ref{aprop2}, $\Gg$ is isomorphic to either $\Oo _X$, $\Oo _H^{\oplus 2}$ or $\Ii _L(1)$ for some line $L\subset H$.
If $\Gg \cong \Oo _X$, then we get
\begin{equation}\label{o1o}
1 + \binom{a+1}{2} +\binom{a-d+2}{2} = h^0(\Ee )=1+ \binom{b+2}{2}.
\end{equation}
Since $\Hom_X (\Oo _H(b),\Oo _H)=0$, we get an injective map $j: \Oo _H(b)\rightarrow \Ii _C(d)$ with the cokernel isomorphic to $\Oo_H(-1)$, which is the kernel of the surjection $\Gg \rightarrow \Oo_H$. Since $\Ii _C(a)$ is an extension of $\Oo _H(a-d)$ by $\Oo _H(a-1)$, we get $b=a-1$, which is impossible by (\ref{o1o}).

Thus we get either  $\Gg \cong \Oo _H^{\oplus 2}$ or $\Gg \cong  \Ii _L(1)$ for some line $L\subset H$ with the equality
\begin{equation}\label{o1o1}
1 + \binom{a+1}{2} +\binom{a-d+2}{2} = \binom{b+2}{2} +2
\end{equation}
as above. The nonzero map $j: \Oo _H(b)\rightarrow \Ii _C(d)$ gives that either $b=a-1$ or $b\le a-d$, and this implies from (\ref{o1o1}) that $a=1$. Then we get $d=1$ and $b=0$, a contradiction.

\quad (ii) Now assume $b=0$, in particular the map $\Ee \rightarrow \Oo _H$ induces the zero-map $\Oo _H\rightarrow \Oo _H$, because $\epsilon \ne 0$ and any non-zero map $\Oo _H\rightarrow \Oo _H$
is an isomorphism. Thus we get a surjective map $\Gg \rightarrow \Oo _H$. So as before $\Gg$ is isomorphic to either $\Oo _X$, $\Oo _H^{\oplus 2}$ or $\Ii _L(1)$ for some line $L\subset H$. If  $\Gg \cong \Oo _X$, then $h^0(\Ee )=2$ and $h^0(\Ii _C(a)) =1$, a contradiction. If either $\Gg \cong \Oo _H^{\oplus 2}$ or $\Gg \cong  \Ii _L(1)$, then we get $h^0(\Ee )=3$ and $h^0(\Ii _C(a)) =2$, i.e. $a=d=1$. Thus we are in the set-up of Lemma \ref{aaa1}, and it is indecomposable.
\end{proof}

\noindent The sheaf $\Ii _C(a)$ is an extension of $\Oo _H(a-d)$ by $\Oo _H(a-1)$. First assume $a\ge d+1$. Since $\omega _C \cong \Oo _C(d-3)$, the exact sequence
$$0\to \Ii _C\to \Oo _X\to \Oo _C\to 0$$
gives $h^2(\Ii _C(a-3)) =0$. Therefore $h^2(\Ee (-3)) =1$ and by Serre's duality there is a unique surjection $\Ee \rightarrow \Oo_H$, up to a scalar, i.e. a unique subsheaf isomorphic to $\Ii_C(a)$ for some integer $a$ and some curve $C$. Hence if $f: \Ee \rightarrow \Ee'$ is an isomorphism, then $\Ee'$ contains $\Ii_C(a)$ and $f(\Ii_C(a)) =\Ii_C(a)$. We get that the isomorphism classes of $\Ee$ are parametrized by the quotient of the family of the non-zero extensions of $\Oo_H$ by $\Ii_C(a)$, by the action of $\mathrm{Aut} (\Ii_C(a))$. Note that by Proposition \ref{ggg} below we have $\dim \mathrm{Aut} (\Ii_C(a)) = 1+\binom{d+1}{2}$. In other words, we have the action of the algebraic group $\mathrm{Aut} (\Ii_C(a))$ on the quasi-affine integral variety $\Ext_X^1(\Oo_H, \Ii_C(a))\setminus \{0\}$ so that the isomorphism classes of these sheaves are parametrized by the orbits of the algebraic group $\mathrm{Aut} (\Ii_C(a))$.

\begin{proposition}\label{ggg}
For a plane curve $C\subset H$ of degree $d$, the group $\Aut (\Ii _C)$ is identified with the group of matrices
$$\left \{\binom{a \quad f}{0 \quad a^{-1}}~ \bigg|~ a\in\mathbf{k}\setminus \{0\}~,~f \in\mathbf{k}[x,y,z]_{d-1} \right \}.$$
In particular, $\dim\mathrm{End} (\Ii _C) = 1+\binom{d+1}{2}$.
\end{proposition}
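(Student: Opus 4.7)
The plan is to exploit the short exact sequence (\ref{eqbbb1}) from Example~\ref{ex1}, namely
$$0 \to \Oo_H(-1) \to \Ii_C \to \Oo_H(-d) \to 0,$$
which presents $\Ii_C$ as an extension of two line bundles on $H$ sitting inside $X$. The crucial observation is that the subsheaf $\Oo_H(-1) \subset \Ii_C$ is intrinsically characterised: it is exactly the $w$-torsion of $\Ii_C$, where $w$ is the defining equation of $H$ in $X$. Indeed, $\Ii_C \subseteq \Oo_X$ and the $w$-torsion of $\Oo_X$ is $\Oo_H(-1)$, and the extension above already contains this subsheaf. Consequently, any $\phi \in \End_X(\Ii_C)$ preserves this filtration and induces scalars $a, b \in \mathbf{k}$ acting on $\Oo_H(-1)$ and on the quotient $\Oo_H(-d)$ respectively.

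The heart of the argument is to show that $a = b$; this is the step that genuinely uses the $\Oo_X$-structure (as opposed to just the $\Oo_H$-module structures on the sub and quotient), and is the main obstacle. To this end, consider the $\Oo_X$-linear multiplication map $w \colon \Ii_C \to \Ii_C(1)$. Its kernel is the $w$-torsion $\Oo_H(-1)$, so it factors as
$$\Ii_C \twoheadrightarrow \Oo_H(-d) \xrightarrow{\cdot f} \Oo_H \hookrightarrow \Ii_C(1),$$
where $f \in H^0(\Oo_H(d))$ is the defining section of $C \subset H$, and the last inclusion is $\Oo_H(-1)(1) \hookrightarrow \Ii_C(1)$. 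Since $\phi$ commutes with $w$, the induced maps on sub and quotient must intertwine with multiplication by $f$, yielding $a \cdot f = b \cdot f$ as morphisms $\Oo_H(-d) \to \Oo_H$; since $f \neq 0$ this forces $a=b$.

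It remains to identify the kernel of the trace-type map $\End_X(\Ii_C) \to \mathbf{k}$, $\phi \mapsto a$. Any element in this kernel vanishes on $\Oo_H(-1)$ and induces zero on the quotient $\Oo_H(-d)$, so it factors uniquely as
$$\Ii_C \twoheadrightarrow \Oo_H(-d) \xrightarrow{g} \Oo_H(-1) \hookrightarrow \Ii_C,$$
for some $g \in \Hom_X(\Oo_H(-d), \Oo_H(-1))$. Since $w$ acts as zero on both $\Oo_H(-d)$ and $\Oo_H(-1)$, any $\Oo_X$-linear map between these sheaves is automatically $\Oo_H$-linear, so this $\Hom$ space is $H^0(\Oo_H(d-1))$, of dimension $\binom{d+1}{2}$. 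Combined with the surjectivity of $\phi \mapsto a$ (witnessed by the identity), this gives $\dim \End(\Ii_C) = 1 + \binom{d+1}{2}$.

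Finally, for the group structure of $\Aut(\Ii_C)$, decompose $\phi = a \cdot \mathrm{id} + \psi$ where $\psi$ factors through the quotient and lands in $\Oo_H(-1) \subset \Ii_C$; this forces $\psi^2 = 0$, so $\phi$ is invertible precisely when $a \in \mathbf{k}^{\times}$. Writing $\phi$ as a triangular matrix with respect to the filtration, with diagonal determined by $a$ and upper-triangular entry a polynomial in $\mathbf{k}[x,y,z]_{d-1}$ representing $\psi$, yields the claimed matrix description, the two diagonal entries being tied together by the $a=b$ relation established above.
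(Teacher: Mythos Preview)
Your proof is correct and takes a genuinely different route from the paper's. The paper lifts automorphisms of $\Ii_C$ to automorphisms of its $\Oo_{\PP^3}$-free resolution
\[
0\to \Oo_{\PP^3}(-2)\oplus\Oo_{\PP^3}(-d-1)\stackrel{A}{\to}\Oo_{\PP^3}(-1)\oplus\Oo_{\PP^3}(-d)\to \Ii_C\to 0,
\]
and solves the commutation condition $N^{-1}AM=A$ for the lifted pair $(M,N)$. You instead stay on $X$ and exploit the two-step filtration $0\subset \Oo_H(-1)\subset \Ii_C$: recognizing $\Oo_H(-1)$ as the $w$-torsion makes it automatically $\phi$-stable, and your use of the factorization of the multiplication-by-$w$ map through $\Oo_H(-d)\xrightarrow{\cdot f}\Oo_H$ to force the two diagonal scalars to coincide is an elegant replacement for the explicit matrix computation. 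Your approach is more intrinsic and requires no lifting to $\PP^3$; the paper's approach has the advantage of being mechanical once the resolution is written down.

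One remark: your argument produces the diagonal $(a,a)$, whereas the statement records $(a,a^{-1})$. If one actually carries out the paper's resolution computation with $A=\begin{pmatrix} w & -f\\ 0 & w\end{pmatrix}$, the equation $AM=NA$ also forces equal diagonal entries, so the discrepancy appears to be a typo in the displayed matrix; the two descriptions are not isomorphic as groups (yours is abelian, the displayed one is not), and yours is the one both methods actually yield. The dimension formula $\dim\End(\Ii_C)=1+\binom{d+1}{2}$, which is the only consequence invoked elsewhere in the paper, is unaffected.
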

\begin{proof}
Automorphisms of $\Ii_C$ extend to automorphisms of its $\Oo_{\mathbb{P}^3}$-resolution
\[
0\to \Oo_{\PP^3}(-2)\oplus\Oo_{\PP^3}(-d-1)\stackrel{A}{\to}\Oo_{\PP^3}(-1)\oplus\Oo_{\PP^3}(-d)\to \Ii_C\to 0,
\]
\noindent namely, to pairs of matrices
$$(M,N)\in \mathrm{End}(\Oo_{\PP^3}(-2)\oplus\Oo_{\PP^3}(-d-1))\oplus \mathrm{End}(\Oo_{\PP^3}(-1)\oplus\Oo_{\PP^3}(-d))$$
such that $N^{-1}AM=A$. Developing this equation, the pairs correspond to matrices as on the statement. Now, since $\Aut (\Ii _C)$ is a non-empty open subset of $\End (\Ii _C)$,we get the second part of the statement as well.
\end{proof}

There remains the case $a=d$. If $h^2(\Ee (-3)) =1$, then again $\Ee$ fits in a unique extension of $\Oo _H$ by $\Ii _C(d)$. So we have the same description as in the case $a\ge d+1$. Now assume $h^2(\Ee (-3)) >1$. If $a=d=1$, then we are in the set up of Lemma \ref{aaa1} with $e_R\ge 2$. Now assume $a=d>1$. Note that $\Ee$ contains a unique copy of $\Oo _H(d-1)$ with $\Ff:= \Ee /\Oo _H(d-1)$, where $\Ff$ is isomorphic to $\Oo_H^{\oplus 2}$ or $\Ii _L(1)$ for some line $L\subset H$. Thus any isomorphism $f: \Ee \rightarrow \Ee'$ sends $\Oo_H(d-1)$ to the corresponding copy of $\Oo_H(d-1)$ of $\Ee'$. So $f$ induces an isomorphism of extensions of $\Ff$ by $\Oo _H(d-1)$. We get that the isomorphism classes of $\Ee$ are parametrized by the quotient of the family of non-zero extensions of $\Ff$ by $\Oo _H(d-1)$, by the action of $\mathrm{Aut}(\Ff )$. See Lemma \ref{ggg} for the computation of $\mathrm{Aut}(\Ff )$ in the case $\Ff \cong \Ii _L(1)$. In all cases $\Ee$ is decomposable only if either
\begin{itemize}
\item $\Ee \cong \Oo _H(d-1)\oplus \Ff$, which is possible only if the extension of $\Ff$ by $\Oo _H(d-1)$ is the trivial one, or
\item $\Ee \cong \Oo _H\oplus \Gg$ for some aCM sheaf $\Gg$ that is $0$-regular with $h^0(\Gg ) = \binom{d+1}{2} +1$.
\end{itemize}
If the inclusion $j: \Oo _H\rightarrow \Ee$ induced from an $\Oo_H$-factor of $\Ee$, does not split the extension $\Ee$ of $\Oo _H$ by $\Ii _C(d)$, then the surjection $\Ee \rightarrow \Oo_H$ maps $j(\Oo _H)$ onto zero, in particular $j(\Oo _H)$ is a saturated subsheaf of $\Ii _C(d)$, i.e. the quotient is torsion-free. Its existence would imply that $\Ii _C(d)\cong \Oo _H\oplus \Oo _H(d-1)$, contradicting the fact that $\Ii _C(d)$ is not an $\Oo _H$-sheaf.

\begin{remark}
Among the aCM sheaves of rank $3/2$ studied in this section, only the sheaves of the form $\Oo_H(a)\oplus \Oo_H(b)\oplus \Oo_H(c)$ are $\Oo_H$-sheaves; for any other sheaf $\Ee$ there is no closed  proper subscheme $Y\subset X$ such that $\Ee$ is an $\Oo_Y$-sheaf.
\end{remark}


\section{Ulrich sheaves}
In this section, we discuss the (non)-existence of Ulrich sheaves on $X$. Recall that $\Delta$ is the collection of aCM vector bundles, admitting an extension of $\Ss_H$ by $\Ss_H(-1)$; see Example \ref{qwe}.

\begin{lemma}\label{yyy}
For every $[\Ee] \in \Delta$, its twist $\Ee(1)$ is Ulrich.
\end{lemma}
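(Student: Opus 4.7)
The plan is to verify the three Ulrich conditions for $\Ee(1)$ directly from the defining extension
\[
0\to\Ss_H(-1)\to\Ee\to\Ss_H\to 0
\]
(analogous to (\ref{eqerrb1})) together with the hypothesis that $\Ee$ is aCM. Since aCM is preserved under twist, $\Ee(1)$ is automatically aCM, so what remains is to show that $\Ee(1)$ is initialized ($h^0(\Ee)=0$) and that $h^0(\Ee(1))=\deg(X)\cdot\mathrm{rank}(\Ee)=2\cdot 4=8$.

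The key preliminary is the cohomology of $\Ss_H\cong\Oo_H\oplus\Oo_H(-1)\oplus\Omega_H^1(1)$, which I would compute from the Euler sequence on $H\cong\PP^2$. The relevant values are $h^0(\Ss_H(-1))=0$, $h^0(\Ss_H)=1$ and $h^0(\Ss_H(1))=3+1+3=7$; moreover $h^1(\Ss_H(t))=h^1(\Omega_H^1(t+1))$, which vanishes for $t\neq -1$ and equals $1$ for $t=-1$, while $h^2(\Ss_H(t))=0$ for $t\ge -2$.

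With these values, taking cohomology of the defining extension produces a connecting map $\delta\colon H^0(\Ss_H)\to H^1(\Ss_H(-1))$ between one-dimensional spaces. The aCM assumption forces $H^1(\Ee)=0$; combined with $H^1(\Ss_H)=0$, this makes $\delta$ surjective and hence an isomorphism, so $H^0(\Ee)=\ker\delta=0$, giving initialization. Twisting the extension by $\Oo_X(1)$ and using $H^1(\Ss_H)=0$, the long exact sequence collapses to
\[
0\to H^0(\Ss_H)\to H^0(\Ee(1))\to H^0(\Ss_H(1))\to 0,
\]
yielding $h^0(\Ee(1))=1+7=8$, which is exactly the Ulrich bound $\deg(X)\cdot\mathrm{rank}(\Ee)$.

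The argument is essentially a bookkeeping exercise: the only genuinely nontrivial cohomological input is $h^1(\Omega_H^1)=1$, which is responsible for the unique nonvanishing intermediate piece of $H^\ast(\Ss_H(t))$ and which forces $\delta$ to be an isomorphism once $\Ee$ is aCM. I expect no conceptual obstacle beyond verifying these Euler-sequence computations on $\PP^2$; in particular the proof does not depend on the specific extension class defining $\Ee$, only on its being aCM.
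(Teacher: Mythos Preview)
Your proposal is correct and follows essentially the same route as the paper's proof: both use the extension $0\to\Ss_H(-1)\to\Ee\to\Ss_H\to 0$, the cohomology computation of $\Ss_H\cong\Oo_H\oplus\Oo_H(-1)\oplus\Omega_H^1(1)$ (with $h^1(\Omega_H^1)=1$ as the key nontrivial value), the aCM hypothesis $H^1(\Ee)=0$ to force the connecting map $\delta$ to be an isomorphism and hence $h^0(\Ee)=0$, and then the twisted sequence to obtain $h^0(\Ee(1))=1+7=8$. The paper merely writes out $\Ss_H$ as its direct-sum decomposition rather than using the symbol $\Ss_H$, but the argument is otherwise line-for-line the same.
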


\begin{proof}
By definition of $\Delta$, the vector bundle $\Ee$ is aCM with $\Ee _{|H} \cong \Omega _H^1(1)\oplus \Oo _H\oplus \Oo _H(-1)$. So it fits into the exact sequence
\begin{equation}\label{eqq1}
0 \to \Omega _H^1\oplus \Oo _H(-1)\oplus \Oo _H(-2)\to \Ee \to \Omega _H^1(1)\oplus \Oo _H\oplus \Oo _H(-1) \to 0.
\end{equation}
In particular, we have $h^0(\Ee )\le 1$ and $H^0(\Ee)$ is the kernel of the coboundary map $\delta : H^0(\Oo _X)\rightarrow H^1(\Omega _H^1)$. The latter cohomology group is one-dimensional and $\delta$ must be an isomorphism, because $H^1(\Ee )$ is trivial. Again by (\ref{eqq1}) we get
\begin{align*}
h^0(\Ee (1)) &=h^0(\Oo _H)+h^0(\Oo _H)+h^0(\Oo _H(1)) +h^0(\Omega _H^1(2)) \\
&=1+1+3+3=8.
\end{align*}
Since $\Ee(1)$ is an initialized vector bundle of rank four on $X$ with degree two, it is Ulrich.
\end{proof}

\begin{lemma}\label{errb3}
For each $[\Ee] \in \Delta$, we have $4 \le \dim \End (\Ee ) \le 9$.
\end{lemma}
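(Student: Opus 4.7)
The plan is to read off $\End_X(\Ee)$ from the four-term exact sequence obtained by applying $\Hom_X(\Ee,-)$ to the defining short exact sequence (\ref{eqerrb1}):
\[
0\to \Hom_X(\Ee,\Ss_H(-1))\to \End_X(\Ee)\to \Hom_X(\Ee,\Ss_H)\xrightarrow{\delta}\Ext^1_X(\Ee,\Ss_H(-1)).
\]

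The first step is to reduce all three flanking groups to computations on $H$. Since $\Ss_H$ and $\Ss_H(-1)$ are $\Oo_H$-sheaves, any morphism from the $\Oo_X$-module $\Ee$ to either of them factors through the quotient $\Ee_{|H}\cong \Ss_H$, so by adjunction
\[
\Hom_X(\Ee,\Ss_H)\cong \End_H(\Ss_H),\quad \Hom_X(\Ee,\Ss_H(-1))\cong \Hom_H(\Ss_H,\Ss_H(-1)).
\]
Moreover $\Ee$ is flat over $X$, so $\Tor^X_n(\Ee,\Oo_H)=0$ for $n\ge 1$ and the same adjunction yields $\Ext^1_X(\Ee,\Ss_H(-1))\cong \Ext^1_H(\Ss_H,\Ss_H(-1))$. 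Using the splitting $\Ss_H\cong \Oo_H\oplus \Oo_H(-1)\oplus \Omega_H^1(1)$ from \cite[Theorem 2.5]{Ot2} together with Bott's formula, the Euler sequence and Serre duality on $\PP^2$, I would compute each of these spaces summand by summand. In particular $\Hom_H(\Ss_H,\Ss_H(-1))$ turns out to be one-dimensional, generated by the identity on the $\Oo_H(-1)$ summand, so that
\[
\dim \End_X(\Ee)=1+\dim\ker(\delta).
\]

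The connecting map $\delta$ is the Yoneda pre-composition with the class $\xi\in \Ext^1_X(\Ss_H,\Ss_H(-1))$ defining $\Ee$, and both bounds reduce to controlling its rank. For the lower bound $\dim \End_X(\Ee)\ge 4$, I would observe that $\xi$ is automatically annihilated by enough diagonal endomorphisms of $\Ss_H$---coming from the idempotent projectors onto its three summands---to guarantee at least three nontrivial elements in $\ker\delta$ beyond the contribution from $\Hom_X(\Ee,\Ss_H(-1))$. For the upper bound $\dim \End_X(\Ee)\le 9$, I would invoke the Ulrich condition on $\Ee(1)$: by the last lines of Example \ref{qwe}, it forces $\xi$ into a Zariski open subset of $\PP \Ext^1_X(\Ss_H,\Ss_H(-1))$ on which the Yoneda product map with $\xi$ has rank at least $4$.

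The main technical difficulty is the explicit analysis of the Yoneda product $\beta\mapsto \beta\cdot \xi$ relative to the direct-sum decompositions of $\End_H(\Ss_H)$ and $\Ext^1_H(\Ss_H,\Ss_H(-1))$; the only nonvanishing summands of the latter come from the pairs $(\Oo_H,\Omega^1_H)$, $(\Omega_H^1(1),\Oo_H(-2))$ and $(\Omega_H^1(1),\Omega_H^1)$ surviving Bott vanishing. Once the matrix of $\delta$ with respect to these decompositions is in hand, the numerical bounds on $\ker(\delta)$ and hence on $\End_X(\Ee)$ follow by linear algebra.
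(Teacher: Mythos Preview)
Your exact sequence is, after the adjunction identifications you make, literally the long exact cohomology sequence of
\[
0\to \Aa(-1)\to \mathcal{E}nd(\Ee)\to \Aa\to 0,\qquad \Aa:=\mathcal{E}nd(\Ee)_{|H}\cong \Ss_H\otimes \Ss_H^\vee,
\]
which is exactly the sequence the paper writes down (using that $\mathcal{E}nd(\Ee)$ is locally free). So the two approaches coincide at the level of the governing sequence. The difference is in how the bounds are extracted. The paper does no analysis of the connecting map at all: it simply records $h^0(\Aa(-1))$, $h^0(\Aa)$ and $h^1(\Aa(-1))$ and reads off
\[
h^0(\Aa)-h^1(\Aa(-1))\ \le\ \dim\End(\Ee)\ \le\ h^0(\Aa)+h^0(\Aa(-1)).
\]
No Yoneda products, no matrix of $\delta$, no case analysis of the pairing---the bounds are purely numerical and hold for \emph{every} extension class.

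By contrast, your route to the upper bound has a genuine gap. You say the Ulrich condition ``forces $\xi$ into a Zariski open subset'' of $\PP\Ext^1_X(\Ss_H,\Ss_H(-1))$, citing Example~\ref{qwe}; but that passage asserts that the image of $\Delta$ \emph{contains} a nonempty open subset, not that it is contained in one. Membership in $\Delta$ is the hypothesis of the lemma, not a genericity assumption, and the statement is for all $[\Ee]\in\Delta$. Even granting genericity, you give no argument that the Yoneda map $\beta\mapsto \beta\cdot\xi$ then has rank at least four. Your lower-bound claim that the three idempotent projectors of $\Ss_H$ lie in $\ker\delta$ is likewise unjustified: the class $\xi$ will in general have nonzero components mixing the summands of $\Ss_H$, so there is no reason an individual projector should annihilate it. The Yoneda-matrix programme you sketch is thus both unnecessary for the paper's argument and, as stated, incomplete.

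(As a side remark, your computation $\hom_H(\Ss_H,\Ss_H(-1))=1$ is correct---the identity on the $\Oo_H(-1)$ summand---whereas the paper's listed decomposition of $\Aa$ omits the summand $\Oo_H(1)$ coming from $\Hom(\Oo_H(-1),\Oo_H)$ and accordingly states $h^0(\Aa(-1))=0$.)
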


\begin{proof}
From the Euler sequence
$$0 \to \Omega _H^1 \to \Oo _H(-1)^{\oplus 3}\to \Oo _H\to 0,$$
we get $h^1(\Omega _H^1\otimes (\Omega _H^1)^\vee (-1)) = h^0((\Omega _H^1)^\vee (-1)) =3$. Set
$$\Aa:= \Oo _H ^{\oplus 2}\oplus \Oo _H(-1)\oplus \Omega _H^1 \otimes (\Omega _H^1)^\vee \oplus \Omega _H^1(1) \oplus \Omega _H^1(2) \oplus (\Omega _H^1)^\vee (-1) \oplus (\Omega _H^1)^\vee (-2).$$
We have $h^0(\Aa (-1)) =0$, $h^0(\Aa ) = 9$ and $h^1(\Aa (-1)) =2 +h^1(\Omega _H^1\otimes (\Omega _H^1)^\vee (-1))=5$. Since $\mathcal{E}nd (\Ee)$ is locally free, we have an exact sequence
\begin{equation}\label{eqerrb2.2}
0 \to \Aa (-1) \to \mathcal{E}nd (\Ee )\to \Aa \to 0,
\end{equation}
in particular we get $4 \le \dim \End (\Ee )\le 9$.
\end{proof}


\begin{proposition}\label{ff1}
Let $\Ee$ be a layered sheaf of rank $r\ge 1$ on $X$ with the filtration in Definition \ref{llay}. Set $\mathrm{ind}(\Ee):= \max\{t\in \ZZ \mid H^0(\Ee (-t)) \ne 0\}$. Then we have
\begin{itemize}
\item [(i)] $\mathrm{ind}(\Ee)= \max \{a_1,\dots ,a_{2r}\}$;
\item [(ii)] $h^0(\Ee (-\mathrm{ind}(\Ee))) = \sharp \{i\in \{1,\dots, 2r\}\mid a_i =\mathrm{ind}(\Ee)\}$.
\end{itemize}
\end{proposition}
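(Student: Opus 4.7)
The plan is to compute $h^0(\Ee(-t))$ explicitly for every $t \in \ZZ$ by exploiting the layered filtration together with the aCM property of every term in that filtration, and then read off (i) and (ii) as immediate corollaries.

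The key observation is that each subsheaf $\Ee_i$ in the filtration is itself layered, with the filtration $0 = \Ee_0 \subset \Ee_1 \subset \cdots \subset \Ee_i$ whose successive quotients are $\Oo_H(a_1),\dots,\Oo_H(a_i)$. By the remark following Definition \ref{llay}, every layered sheaf is aCM, so $H^1(\Ee_i(s)) = 0$ for all $s \in \ZZ$ and all $i \in \{0,1,\dots,2r\}$. This will make all the relevant connecting maps vanish.

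I would then twist each short exact sequence
$$0 \to \Ee_{i-1} \to \Ee_i \to \Oo_H(a_i) \to 0$$
by $\Oo_X(-t)$ and apply the long exact cohomology sequence. Since $H^1(\Ee_{i-1}(-t)) = 0$, the restriction map $H^0(\Ee_i(-t)) \to H^0(\Oo_H(a_i - t))$ is surjective, giving
$$h^0(\Ee_i(-t)) = h^0(\Ee_{i-1}(-t)) + h^0(\Oo_H(a_i - t)).$$
Induction on $i$ yields the closed formula
$$h^0(\Ee(-t)) = \sum_{i=1}^{2r} h^0(\Oo_H(a_i - t)) = \sum_{\{i\,:\,a_i \ge t\}} \binom{a_i - t + 2}{2}.$$

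Set $m := \max\{a_1,\dots,a_{2r}\}$. From the formula, $h^0(\Ee(-t)) \neq 0$ if and only if some $a_i \ge t$, i.e.\ if and only if $t \le m$; this is precisely statement (i), namely $\mathrm{ind}(\Ee) = m$. For (ii), evaluating at $t = m$ the only non-zero contributions come from those $i$ with $a_i = m$, each contributing $h^0(\Oo_H) = 1$, so $h^0(\Ee(-m)) = \#\{i : a_i = m\}$. There is no real obstacle here; the only thing to verify carefully is that the intermediate pieces $\Ee_i$ are layered (hence aCM), which follows directly from restricting the given filtration.
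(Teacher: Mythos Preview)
Your proof is correct and uses essentially the same idea as the paper: both rely on the fact that each intermediate $\Ee_{i}$ in the filtration is layered, hence aCM, so that $H^1(\Ee_{i-1}(-t))=0$ kills the obstruction to lifting sections. Your version is slightly more explicit in that you derive the closed formula $h^0(\Ee(-t))=\sum_i h^0(\Oo_H(a_i-t))$ and read off both (i) and (ii) from it, whereas the paper argues (i) directly via the two inequalities and leaves (ii) to the reader.
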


\begin{proof}
If we let $\rho= \max \{a_1,\dots ,a_{2r}\}$, then the filtration of $\Ee$ gives $H^0(\Ee (-t)) =0$ if $t >a_i$ for all $i$. So we get $\mathrm{ind}(\Ee) \le \rho$. If $\rho=a_1$, then we get $H^0(\Ee (-a_1)) \ne 0$ and $\mathrm{ind}(\Ee) \ge a_1$. Similarly, if $\rho=a_i$ for $i\ge 2$, we get $H^0((\Ee /\Ee _{i-1})(-\rho)) \ne 0$. Since $\Ee _{i-1}$ is aCM, we get $H^0(\Ee (-\rho)) \ne 0$ and $\mathrm{ind}(\Ee)\ge \rho$.
\end{proof}

\begin{corollary}\label{ff2}
If $\Ee$ is a layered Ulrich sheaf of rank $r$ with the filtration in Definition \ref{llay}, then we have $a_i=0$ for all $i$.
\end{corollary}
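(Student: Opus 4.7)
The plan is to chain together two pieces of information: that Ulrich sheaves are initialized, which controls $\max_i a_i$ from above via Proposition \ref{ff1}, and that Ulrich sheaves have the maximal possible $h^0$, which via the filtration forces each $a_i$ to be exactly $0$.

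First, since $\Ee$ is Ulrich and therefore initialized, $h^0(\Ee(-1))=0$, so $\mathrm{ind}(\Ee)\le 0$. Proposition \ref{ff1}(i) then gives $\max\{a_1,\dots,a_{2r}\}=\mathrm{ind}(\Ee)\le 0$, i.e.\ $a_i\le 0$ for every $i$.

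Second, from the short exact sequences $0\to \Ee_{i-1}\to \Ee_i\to \Oo_H(a_i)\to 0$ coming from the layered filtration, a straightforward induction on $i$ yields
\[
h^0(\Ee)\;\le\;\sum_{i=1}^{2r} h^0(\Oo_H(a_i)).
\]
Since each $a_i\le 0$, we have $h^0(\Oo_H(a_i))\le 1$ with equality if and only if $a_i=0$. Therefore
\[
h^0(\Ee)\;\le\;\#\{i\in\{1,\dots,2r\}\mid a_i=0\}\;\le\;2r.
\]

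Finally, the Ulrich property on the double plane $X$ (of degree $2$) says $h^0(\Ee)=\deg(X)\cdot\mathrm{rank}(\Ee)=2r$, so both inequalities above must be equalities. The second equality forces $a_i=0$ for all $i$, which is the desired conclusion. There is no real obstacle here: the argument is essentially a comparison of the Ulrich normalization with Proposition \ref{ff1}, and the only subtlety is making sure one exploits both $h^0(\Ee(-1))=0$ (upper bound on $a_i$) and $h^0(\Ee)=2r$ (lower bound, after summing over the filtration).
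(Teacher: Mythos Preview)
Your proof is correct and follows essentially the same approach as the paper: use that an Ulrich sheaf is initialized to get $\mathrm{ind}(\Ee)=0$ (hence $a_i\le 0$ by Proposition~\ref{ff1}(i)), then use $h^0(\Ee)=2r$ to force all $a_i=0$. The only cosmetic difference is that the paper invokes Proposition~\ref{ff1}(ii) to obtain $h^0(\Ee)=\#\{i:a_i=0\}$ directly, whereas you reprove the relevant inequality from the filtration; both routes amount to the same observation.
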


\begin{proof}
By Lemma \ref{ff1}, we have $0=\mathrm{ind}(\Ee) = \max \{a_1,\dots ,a_{2r}\}$ and $a_i =0$ for $2r$ indices $i$, concluding the proof.
\end{proof}

\begin{proposition}\label{ff2.1-}
There is no layered Ulrich vector bundle on $X$.
\end{proposition}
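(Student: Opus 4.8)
The plan is to argue by contradiction, reducing via Corollary \ref{ff2} to a statement about the restriction $\Ee_{|H}$ that is incompatible with $\Ee$ being locally free. Suppose $\Ee$ is a layered Ulrich vector bundle of rank $r\ge 1$ on $X$, equipped with a filtration $0=\Ee_0\subset\Ee_1\subset\cdots\subset\Ee_{2r}=\Ee$ as in Definition \ref{llay}. Since $\Ee$ is Ulrich, Corollary \ref{ff2} forces $a_i=0$ for every $i$, so each graded piece is $\Ee_i/\Ee_{i-1}\cong\Oo_H$ with no twist.

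The heart of the argument is to restrict this filtration to $H$. Because $\Ee$ is a vector bundle, $\Ee_{|H}=\Ee\otimes_{\Oo_X}\Oo_H$ is locally free of rank $r$ on $H\cong\PP^2$, and in particular torsion-free. First I would apply the right-exact functor $-\otimes_{\Oo_X}\Oo_H$ to each step $\Ee_{i-1}\to\Ee_i\to\Oo_H\to 0$; setting $N_i:=\mathrm{Im}(\Ee_{i|H}\to\Ee_{|H})$ I obtain an increasing filtration $0=N_0\subseteq N_1\subseteq\cdots\subseteq N_{2r}=\Ee_{|H}$ in which the surjection $\Ee_{i|H}\to N_i$ induces a surjection $\mathrm{coker}(\Ee_{i-1|H}\to\Ee_{i|H})\cong\Oo_H\twoheadrightarrow N_i/N_{i-1}$. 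Thus each $N_i/N_{i-1}$ is a quotient of $\Oo_H$; being a subsheaf of the torsion-free sheaf $\Ee_{|H}/N_{i-1}$, it is torsion-free, and a torsion-free quotient of the line bundle $\Oo_H$ is either $0$ or $\Oo_H$. Hence $\Ee_{|H}$ is an iterated extension of copies of $\Oo_H$.

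Next I would invoke $\mathrm{Ext}^1_H(\Oo_H,\Oo_H)\cong H^1(\Oo_{\PP^2})=0$, so every such extension splits and $\Ee_{|H}\cong\Oo_H^{\oplus r}$. In particular $\Ee_{|H}$ splits, so Lemma \ref{gg0} forces $\Ee$ itself to split, say $\Ee\cong\bigoplus_{i=1}^r\Oo_X(t_i)$. Restricting to $H$ and comparing with $\Ee_{|H}\cong\Oo_H^{\oplus r}$ gives $t_i=0$ for all $i$, so $\Ee\cong\Oo_X^{\oplus r}$. But $\Oo_X^{\oplus r}$ is not Ulrich, since $h^0(\Oo_X^{\oplus r})=r\neq 2r=\deg(X)\,\mathrm{rank}(\Ee)$ for $r\ge 1$. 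This contradiction completes the proof.

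The step I expect to be the crux — and the only place where local freeness is genuinely used — is the collapse of the restricted filtration in the second paragraph: torsion-freeness of $\Ee_{|H}$ is exactly what erases the extension/twist data and turns each $\Oo_H$-quotient into $0$ or $\Oo_H$. Without local freeness this fails, and indeed it must fail: non-locally-free layered sheaves such as $\Ii_L(1)$ from Example \ref{ex1} are nontrivial extensions of $\Oo_H$ by $\Oo_H$, so one cannot expect $\Ee_{|H}$ to be trivial in general. The remaining ingredients — that a torsion-free quotient of $\Oo_{\PP^2}$ is $0$ or $\Oo_{\PP^2}$, and that iterated self-extensions of $\Oo_{\PP^2}$ split — are elementary and I would only verify them in passing.
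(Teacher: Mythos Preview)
There is a genuine gap in your argument at the step where you assert that $\Ee_{|H}/N_{i-1}$ is torsion-free. Local freeness of $\Ee$ guarantees that $\Ee_{|H}$ is torsion-free, but quotients of torsion-free sheaves need not be torsion-free: if $N_{1}\cong\Oo_H$ sits inside $\Ee_{|H}$ as the image of a section vanishing along some locus $Z$, then $\Ee_{|H}/N_{1}$ acquires torsion supported on $Z$, and there is nothing in your setup preventing this. Your inductive step therefore does not go through, and the conclusion $\Ee_{|H}\cong\Oo_H^{\oplus r}$ is not established.

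In fact this conclusion is incompatible with the numerics that follow from the hypothesis. From the exact sequence $0\to\Ee_{|H}(-1)\to\Ee\to\Ee_{|H}\to 0$ together with $P_\Ee(t)=2r\binom{t+2}{2}$ one computes $c_1(\Ee_{|H})=r/2$, whereas $\Oo_H^{\oplus r}$ has $c_1=0$. (Equivalently, the analysis in the proof of Proposition~\ref{noulrich3} shows that for any Ulrich bundle $\Ee$ on $X$ one has $\Ee_{|H}\cong\Omega_H^1(2)^{\oplus a}\oplus\Oo_H(1)^{\oplus a}\oplus\Oo_H^{\oplus a}$ with $r=4a$, which never equals $\Oo_H^{\oplus r}$ for $r>0$.) So the torsion you tried to exclude must actually appear among the subquotients $N_i/N_{i-1}$; it is exactly what carries the positive first Chern class. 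The gap is thus not a missing citation but a place where the argument genuinely breaks.

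For comparison, the paper's proof avoids the restriction-of-filtration issue entirely. It shows by induction on $i$ that $H^j(\Ee_{i|H}(-1))=0$ for all $j$, then uses Serre duality to get $H^0(\Ee^\vee_{|H})=0$, and finally tensors the filtration by $\Ee^\vee$ to obtain $h^0(\Ee\otimes\Ee^\vee)=0$, contradicting the existence of the identity. The key is that this argument never needs control over the torsion of $\Ee_{|H}/N_{i-1}$; it works with cohomology of the honest restrictions $\Ee_{i|H}$ instead.
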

\begin{proof}
Let us suppose that there exists an Ulrich vector bundle $\Ee$ of rank $r$ on $X$ with filtration:
\begin{equation}\label{ulrfilt}
 0 \to \Ee _{i-1}\to \Ee_{i} \to \Oo _H\to 0,
\end{equation}
\noindent for $i=2,\dots,2r$ with $\Ee_1\cong\Oo_H$ and $\Ee_{2r}\cong\Ee$.

\quad \emph{Claim 1:} $H^j(\Ee_{i|H}(-1))=0$ for $i=1,\dots,2r$ and $j\in\{0,1,2\}$.

\quad \emph{Proof of Claim 1:} We are going to prove the claim by induction on $i$; notice that the claim is true for $i=1$. On the other hand, tensoring the short exact sequence (\ref{eqa1}) by the $\Oo_X$-sheaf $\Ee_i$ we obtain
$$
0\to\mathcal{T}or^1_X(\Ee_i,\Oo_H)\to\Ee_{i|H}(-1)\to\Ee_i\to\Ee_{i|H}\to 0,
$$
\noindent so we can deduce $\mathcal{T}or^1_X(\Oo_H,\Oo_H)\cong\Oo_H(-1)$ for $i=1$ and $\dim\supp \mathcal{T}or^1_X(\Ee_i,\Oo_H)\leq 1$ for $i\geq 2$. Tensoring the short exact sequence
(\ref{ulrfilt}) by $\Oo_H(-1)$, we get
$$
\mathcal{T}or^1_X(\Ee_i,\Oo_H)(-1)\stackrel{\phi_3}{\to}\Oo_H(-2)\stackrel{\phi_2}{\to} \Ee _{i-1 |H}(-1)\stackrel{\phi_1}{\to} \Ee_{i |H}(-1) \to \Oo _H(-1)\to 0.
$$
\noindent Splitting the previous exact sequence into short ones, we can see first that the surjection
$$
H^2(\Ee_{i-1|H}(-1))\twoheadrightarrow H^2(\Image \phi_1)\cong H^2(\Ee_{i|H}(-1))
$$
\noindent gives us $H^2(\Ee_{i|H}(-1))=0$ by the induction's hypothesis. Next
$$
H^1(\Ee_{i-1|H}(-1))\cong H^1(\Image \phi_1)\cong H^2(\Image \phi_2)
$$
\noindent and the last group is zero due to the existence of the surjection $0=H^2(\Oo_H(-2))\twoheadrightarrow H^2(\Image \phi_2)$. Finally, we have the chain of equalities
$$
H^0(\Ee_{i|H}(-1))=H^0(\Image \phi_1)=H^1(\Image \phi_2)=H^2(\Image \phi_3)
$$
\noindent and again the last cohomology is zero due to the surjection $H^2(\mathcal{T}or^1_X(\Ee_i,\Oo_H)(-1))\twoheadrightarrow H^2(\Image \phi_3)$ and that $H^2(\mathcal{T}or^1_X(\Ee_i,\Oo_H)(-1))=0$ since the support of this sheaf is at most one-dimensional. \qed

\quad \emph{Claim 2:} $H^0(\Ee^{\vee}_{|H})=0$.

\quad \emph{Proof of Claim 2:}
After tensoring the short exact sequence (\ref{eqa1}) by $\Ee^{\vee}$ (notice that, since $\Ee$ is a vector bundle, the operations of dualizing and restricting to $H$ do commute), we get
\begin{equation}\label{eqa6}
0\to\Ee^{\vee}(-1)_{|H}\to\Ee^{\vee}\to\Ee^{\vee}_{|H}\to 0.
\end{equation}

\noindent Since $h^0(\Ee^{\vee})=h^1(\Ee^{\vee})=h^2(\Ee^{\vee}(-1))=0$ by  Serre duality and the fact of $\Ee$ being Ulrich, we deduce from the long exact sequence of cohomology groups associated to (\ref{eqa6})
$$
H^0(\Ee_{|H}^{\vee})\cong H^1(\Ee_{|H}^{\vee}(-1))\cong H^2(\Ee_{|H}^{\vee}(-2))\cong H^0(\Ee_{|H}(-1))=0,
$$
\noindent by Claim 1, where the last isomorphism is obtained applying Serre duality on $H$. This concludes the proof of Claim 2. \qed

Finally, after tensoring (\ref{ulrfilt}) by $\Ee^{\vee}$ for any $i=2,\dots,2r$ and using Claim 1, we would obtain
\[
h^0(\Ee\otimes\Ee^{\vee})=\dots=h^0(\Ee_i\otimes\Ee^{\vee})=\dots=h^0(\Ee^{\vee}_{|H})=0,
\]
a contradiction.
\end{proof}

\begin{proposition}\label{u1}
For any $[\Ee]\in \Delta$ in Example \ref{qwe}, $\Ee$ is not layered.
\end{proposition}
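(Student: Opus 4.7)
My plan is to derive Proposition \ref{u1} directly from the two preceding results in this section, namely Lemma \ref{yyy} and Proposition \ref{ff2.1-}. The idea is very short: being layered is preserved under twisting by $\Oo_X(1)$, while being Ulrich together with being layered is already forbidden for vector bundles on $X$.

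More precisely, I would first observe that if $\Ee$ is a layered vector bundle with filtration
$$0=\Ee_0\subset \Ee_1\subset\cdots\subset \Ee_{2r}=\Ee,\qquad \Ee_i/\Ee_{i-1}\cong \Oo_H(a_i),$$
then tensoring the filtration with $\Oo_X(1)$ (which is exact since $\Oo_X(1)$ is a line bundle) yields a filtration
$$0=\Ee_0(1)\subset \Ee_1(1)\subset\cdots\subset \Ee_{2r}(1)=\Ee(1),\qquad \Ee_i(1)/\Ee_{i-1}(1)\cong \Oo_H(a_i+1),$$
so $\Ee(1)$ is layered in the sense of Definition \ref{llay}. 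Next, by Lemma \ref{yyy}, for every $[\Ee]\in\Delta$ the twist $\Ee(1)$ is an Ulrich vector bundle on $X$. Combining these two facts, if some $\Ee\in\Delta$ were layered, then $\Ee(1)$ would be a layered Ulrich vector bundle on $X$, contradicting Proposition \ref{ff2.1-}. Hence no $[\Ee]\in\Delta$ can be layered.

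There is essentially no obstacle in this argument; it is a direct logical combination of Lemma \ref{yyy} (which supplies the Ulrich property after a twist) and Proposition \ref{ff2.1-} (which rules out layered Ulrich vector bundles altogether). The only tiny check to record is the invariance of the layered property under twisting by $\Oo_X(1)$, which is immediate from Definition \ref{llay}. Thus the proof will consist of just the two-line contradiction argument above.
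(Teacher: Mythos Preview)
Your proof is correct and follows essentially the same approach as the paper: the paper's one-line argument simply notes that $\Ee(1)$ is Ulrich for each $[\Ee]\in\Delta$ and invokes Proposition~\ref{ff2.1-}, leaving the (trivial) twist-invariance of layeredness implicit. Your version just makes this invariance explicit, which is a harmless elaboration.
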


\begin{proof}
Since $\Ee(1)$ is Ulrich for each $[\Ee]\in \Delta$, the result is immediate from Proposition \ref{ff2.1-}.
\end{proof}

\begin{theorem}\label{tthhmm}
For each $r\in {\left(\frac{1}{2}\right)}{\ZZ}_{>0}$, there exists a layered indecomposable Ulrich sheaf with rank $r$.
\end{theorem}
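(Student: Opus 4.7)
My strategy is to proceed by induction on $k=2r\in \ZZ_{>0}$. The base cases are explicit: $\Ee_1 := \Oo_H$ handles $r=1/2$, and $\Ee_2 := \Ii_L(1)$ for any line $L\subset H$ handles $r=1$. The latter is a layered Ulrich sheaf by the twist of (\ref{eqbbb1}), and it is indecomposable because by Proposition \ref{ggg} its endomorphism ring $\End_X(\Ii_L) \cong \mathbf{k}[\epsilon]/(\epsilon^2)$ is a local ring.

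For the inductive step at $k\ge 3$, I will take $\Ee_k$ to be a sufficiently generic nontrivial extension
\[
0 \to \Oo_H \to \Ee_k \to \Ee_{k-1} \to 0
\]
with class $\eta \in \Ext_X^1(\Ee_{k-1}, \Oo_H)$. Rank additivity gives $\mathrm{rank}(\Ee_k) = k/2$; prepending $\Oo_H$ to the filtration of $\Ee_{k-1}$ yields a layered structure with all successive quotients isomorphic to $\Oo_H$; and the Ulrich property is preserved by such extensions, since the conditions $h^0(\Ee_k(-1))=0$, $h^0(\Ee_k)=k$, and the vanishing of intermediate cohomology all follow additively from the associated long exact sequence. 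To see that $\Ext_X^1(\Ee_{k-1}, \Oo_H)$ is nonzero, I will exploit the periodic $\Oo_X$-free resolution $\cdots \xrightarrow{w} \Oo_X(-2) \xrightarrow{w} \Oo_X(-1) \xrightarrow{w} \Oo_X \to \Oo_H \to 0$ of $\Oo_H$: since multiplication by $w$ annihilates every $\Oo_H$-module $\Gg$, this gives $\Ext_X^i(\Oo_H, \Gg) \cong H^0(\Gg(i))$ for every $i\ge 0$, and iterating the long exact sequences of $\Hom_X(-, \Oo_H)$ along the filtration of $\Ee_{k-1}$ produces the desired non-vanishing.

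The hard part will be to show that a generic such $\eta$ forces $\Ee_k$ to be indecomposable. If $\Ee_k=\Ff_1\oplus \Ff_2$ with both summands nonzero, the canonical inclusion $\iota\colon \Oo_H \hookrightarrow \Ee_k$ decomposes as $\iota=(\iota_1,\iota_2)$ with $\iota_j\colon \Oo_H \to \Ff_j$. If some $\iota_j$ vanishes, then $\iota(\Oo_H)\subseteq \Ff_{3-j}$, and the induced equality $\Ee_{k-1} = \Ff_{3-j}/\iota(\Oo_H) \oplus \Ff_j$ forces $\Ff_j = 0$ by indecomposability of $\Ee_{k-1}$ or else $\Ff_{3-j}/\iota(\Oo_H) = 0$, in which case the extension is trivial. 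If both $\iota_j$ are nonzero, they are injective because each $\Ff_j$ has pure depth two and hence no proper lower-dimensional subsheaves; unwinding the resulting commutative diagram shows that $\Ee_{k-1}$ must contain $\Oo_H$ as a subsheaf whose quotient splits as $\Ff_1/\iota_1(\Oo_H)\oplus \Ff_2/\iota_2(\Oo_H)$. Tracking the admissible such substructures of $\Ee_{k-1}$ and their compatibility with $\eta$ cuts out a proper closed locus in $\Ext_X^1(\Ee_{k-1},\Oo_H)$ whose complement parameterizes indecomposable $\Ee_k$. Making this genericity count rigorous — showing in particular that the decomposable locus is indeed proper and that the subsheaves of $\Ee_{k-1}$ arising from such decompositions form a bounded family — is the main obstacle, and it will require careful inductive control on the substructure of $\Ee_{k-1}$.
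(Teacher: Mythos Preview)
Your construction runs the extension in the opposite direction from the paper's: you take $0\to\Oo_H\to\Ee_k\to\Ee_{k-1}\to 0$, whereas the paper takes $0\to\Ee_{2r-1}\to\Ee_{2r}\to\Oo_H\to 0$. Both directions produce layered Ulrich sheaves, and non-vanishing of the relevant $\Ext^1$ is easy either way, so this is not an error in itself. The real gap is your indecomposability argument, which you yourself flag as ``the main obstacle'': the case analysis on $(\iota_1,\iota_2)$ and the claim that decomposable extensions form a proper closed locus in $\Ext^1_X(\Ee_{k-1},\Oo_H)$ are not carried out, and bounding the substructures of $\Ee_{k-1}$ compatible with a splitting is genuinely delicate.

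The paper avoids this entirely with a one-line trick you are missing. Any direct summand of an Ulrich sheaf is again Ulrich; by Corollary~\ref{ff2} a layered Ulrich sheaf has all filtration weights $a_i=0$, hence contains $\Oo_H$ as a subsheaf. So if $\Ee_{2r}\cong\Ff_1\oplus\Ff_2$ nontrivially, then $\hom_X(\Oo_H,\Ee_{2r})\ge 2$. The paper then proves inductively that $\hom_X(\Oo_H,\Ee_{2r})=1$: applying $\Hom(\Oo_H,-)$ to the defining extension, if this $\hom$ exceeded $\hom_X(\Oo_H,\Ee_{2r-1})=1$ the sequence would split. This replaces your genericity/dimension count with a single cohomological invariant. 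The same idea works in your direction too: for a nontrivial $\eta$ the connecting map $\Hom_X(\Oo_H,\Oo_H)\to\Ext^1_X(\Ee_{k-1},\Oo_H)$ is injective, so $\hom_X(\Ee_k,\Oo_H)=\hom_X(\Ee_{k-1},\Oo_H)=1$, and then any Ulrich summand would have to surject onto $\Oo_H$. Finally, your formula $\Ext^i_X(\Oo_H,\Gg)\cong H^0(\Gg(i))$ is only correct for the \emph{local} $\mathcal{E}xt$; globally you need the local-to-global spectral sequence, though for the sheaves at hand the higher $H^p$ terms vanish and the conclusion survives.
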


\begin{proof}
By Corollary \ref{ff2} we need only to check the indecomposability of some layered sheaf $\Ee$ with a filtration $0 =\Ee _0\subset \cdots \subset \Ee _{2r} = \Ee$ with $\Ee _i/\Ee _{i-1} \cong \Oo _H$ for all $i$. The case $r =1/2$ is trivial. Note that the assertion is also true for $r \in \left\{1, \frac{3}{2}\right\}$; for $r=1$, up to isomorphism it only gives the indecomposable sheaf $\Ii _L(1)$ with $L\subset H$ a line, and the case $r =\frac{3}{2}$ comes as a particular case of Lemma \ref{iid} with $(a,d)=(1,1)$.

Set $\Ee _1:= \Oo _H$. For each $r\in {\left(\frac{1}{2}\right)}{\ZZ}_{\geq 1}$, let $\Ee _{2r}$ be the middle term of a general $\Oo_{\PP^3}$-extension
\begin{equation}\label{eqfff1}
0 \to \Ee _{2r-1}\to \Ee _{2r}\stackrel{u}{\to} \Oo _H\to 0.
\end{equation}
By its inductive definition each $\Ee _{2r}$ is an $\Oo_X$-Ulrich sheaf with a filtration $0=\Ee_0\subset \cdots \subset \Ee _{2t}$ with $\Ee _i/\Ee _{i-1}\cong \Oo _H$ for all $i$. We will show that $\Ee_{2r}$ is indecomposable.

\quad \emph{Claim 1:} $\mathrm{ext}_{\PP^3} ^2(\Oo_H, \Ee _{2r-1})=0$ for all $r\in {\left(\frac{1}{2}\right)}{\ZZ}_{\geq 1}$.

\quad \emph{Proof of Claim 1:} for $r=1$, it is well-known that $\mathrm{ext}_{\PP^3} ^2(\Oo_H, \Oo_H)=0$. Applying the functor $\Hom_X(\Oo_H, -)$ to (\ref{eqfff1}) we obtain the strain
$$\Ext_{\PP^3} ^2(\Oo _H,\Ee _{2r-1}) \to \Ext_{\PP^3} ^2(\Oo _H,\Ee _{2r-1}) \to \Ext_{\PP^3} ^2(\Oo_H, \Oo_H).$$

\noindent We get the Claim 1 by induction on $r\in {\left(\frac{1}{2}\right)}{\ZZ}_{\geq 1}$. \qed

\quad \emph{Claim 2:} $\mathrm{ext} ^1_{\PP^3} (\Oo _H,\Ee _{2r-1}) \ne 0$ for all $r\in {\left(\frac{1}{2}\right)}{\ZZ}_{\geq 1}$.

\quad \emph{Proof of Claim 2:} Claim is true for $r=1$ by case $m=1$ of Lemma \ref{ttt}. So we assume $r>1$ and apply the functor $\Hom_{\PP^3} (\Oo _H,-)$ to (\ref{eqfff1}) to get the exact sequence
$$\Ext_{\PP^3} ^1(\Oo _H,\Ee _{2r}) \to \Ext_{\PP^3} ^1(\Oo _H,\Oo _H) \to \Ext_{\PP^3} ^2(\Oo_H, \Ee _{2r-1})=0.$$

\noindent Then we can conclude by Claim 1. \qed

\quad \emph{Claim 3:} For each positive $r\in {\left(\frac{1}{2}\right)}{\ZZ}$, $ \hom_{\PP^3} (\Oo _H,\Ee _{2r}) =1$.
	
\quad \emph{Proof of Claim 3:} We use induction on the integer $2r$, the case $2r=1$ being obvious and the case $2r=2$ being true, because $\Oo _H^{\oplus 2} \not\cong \Ii _L(1)$ for any line $L\subset H$.
Now assume $2r \ge 3$ and that $ \hom_{\PP^3} (\Oo _H,\Ee _{2r}) >1$. Since $ \hom_{\PP^3}(\Oo _H,\Ee _{2r-1}) =1$ by the inductive assumption, there is an inclusion $j: \Oo _H\rightarrow \Ee _{2r}$ such that $u\circ j : \Oo _H\rightarrow \Oo _H$ is the identity map. Hence (\ref{eqfff1}) splits, contradicting Claim 2. \qed

To conclude the proof of Theorem \ref{tthhmm} it is sufficient to prove that $\Ee _{2r}$ is indecomposable. Assume $\Ee _{2r}\cong \Ff_1\oplus \Ff_2$ with each $\Ff_i$ nontrivial. Note that each $\Ff_i$ is aCM and initialized with $h^0(\Ee _{2r})=h^0(\Ff_1)+h^0(\Ff_2)$. So each $\Ff_i$ is Ulrich and by Corollary \ref{ff2} it has a filtration starting with $\Oo _H$. Thus we get $\hom_X (\Oo _H,\Ee _{2r}) = \hom_X (\Oo _H,\Ff_1)+\hom _X(\Oo _H,\Ff_2 )\ge 2$, contradicting Claim 3.
\end{proof}

\bibliographystyle{amsplain}
\providecommand{\bysame}{\leavevmode\hbox to3em{\hrulefill}\thinspace}
\providecommand{\MR}{\relax\ifhmode\unskip\space\fi MR }
\providecommand{\MRhref}[2]{%
  \href{http://www.ams.org/mathscinet-getitem?mr=#1}{#2}
}
\providecommand{\href}[2]{#2}

\end{document}